\newcolumntype{C}[1]{>{\centering\arraybackslash}p{#1}}
\definecolor{navy}{HTML}{2F729C}
\definecolor{red1}{HTML}{FF0000} % this is red
\newcommand{\Q}{{\mathbb Q}}
\newcommand{\typ}{{\rm typ}}
\newtheorem{theorem}{Theorem}[section]
\newtheorem{lemma}[theorem]{Lemma}
\newtheorem{proposition}[theorem]{Proposition}
\newtheorem{corollary}[theorem]{Corollary}
\newtheorem{mainthm}{Theorem}
\newtheorem{mainprop}[mainthm]{Proposition}
\theoremstyle{definition}
\newtheorem{definition}[theorem]{Definition}
\theoremstyle{remark}
\numberwithin{equation}{section}
\newcommand{\KNT}{strongly-minimal }
\title[Local data of elliptic curves under quadratic twist]{Local data of elliptic curves under quadratic twist}
\author[A. Barrios]{Alexander J. Barrios}
\address{Department of Mathematics, University of St. Thomas, St. Paul, Minnesota 55105, USA}
\email{abarrios@stthomas.edu}
\author[M. Roy]{Manami Roy}
\address{Department of Mathematics, Lafayette College, Easton, PA 18042, USA}
\email{royma@lafayette.edu}
\author[N. Sahajpal]{Nandita Sahajpal}
\address{Department of Data, Media and Design, Nevada State University, Henderson, NV, 89002, USA}
\email{nandita.sahajpal@nevadastate.edu}
\author[D. Tallana]{Darwin Tallana}
\address{Department of Mathematics, University of Colorado, Boulder, Boulder, CO 80309, USA}
\email{Darwin.Tallana@colorado.edu}
\author[B. Tobin]{Bella Tobin}
\address{Department of Mathematics, Agnes Scott College, Decatur, GA 30030 USA}
\email{btobin@agnesscott.edu}
\author[H. Wiersema]{Hanneke Wiersema}
\address{Department of Pure Mathematics and Mathematical Statistics, University of Cambridge, Cambridge CB3 0WB, UK}
\email{hw600@cam.ac.uk}
\subjclass{Primary 11G07, 14H10, 14H52, 11G05}
\keywords{Tamagawa numbers, \KNT models, twists, Kodaira-N\'{e}ron type, minimal discriminants}
\begin{document}
\begin{abstract}
Let $K$ be the field of fractions of a complete discrete valuation ring with a perfect residue field. In this article, we investigate how the Tamagawa number of $E/K$ changes under quadratic twist. To accomplish this, we introduce the notion of a \KNT model for an elliptic curve $E/K$, which is a minimal Weierstrass model satisfying certain conditions that lead one to easily infer the local data of $E/K$. Our main results provide explicit conditions on the Weierstrass coefficients of a \KNT model of $E/K$ to determine the local data of a quadratic twist $E^{d}/K$. We note that when the residue field has characteristic $2$, we only consider the special case $K=\mathbb{Q}_{2}$. In this setting, we also determine the minimal discriminant valuation and conductor exponent of $E$ and $E^d$ from further conditions on the coefficients of a \KNT model for $E$.
\end{abstract}

\maketitle
\tableofcontents

\section{Introduction}

Let $E$ be an elliptic curve defined over a number field $F$. The Birch and Swinnerton-Dyer conjecture \cite{BSD} predicts that the rank of the elliptic curve is related to its $L$-function. Moreover, the conjecture includes a precise formula for the leading term of the $L$-function containing various arithmetic data. Next we let $E^{d}$ denote the quadratic twist of $E$ by $d$ for some $d\in F^{\times}/(F^{\times})^{2}$. A natural question is how the factors of $E$ and $E^{d}$ appearing in the leading term of the $L$-function of $E/F$ compare. For instance, work of Pal \cite{Pal} determines how the real periods of $E$ and $E^{d}$ are related in the case when $F=\mathbb{Q}$. In this article, we consider the analogous question of how the local Tamagawa numbers of $E$ and $E^{d}$ compare. 

Recall that for each finite prime $\mathfrak{p}$ of $F$, there is a unique minimal proper regular model of $E$ over the ring of integers $A_\mathfrak{p}$ of the completion $F_{\mathfrak{p}}$ \cite{Neron1964}. For each finite prime $\mathfrak{p}$ of $F$, Tate's Algorithm \cite{Tate1975} yields the following local data of $E/A_{\mathfrak{p}}$: the Kodaira-N\'{e}ron type of the special fiber of the minimal proper regular model, the local conductor exponent (via Ogg's formula \cite{Ogg}), and the local Tamagawa number. For an elliptic curve $E/F$, the global Tamagawa number is defined as $c=\prod_{\mathfrak{p}}c_{\mathfrak{p}}$, where $c_{\mathfrak{p}}$ is the local Tamagawa number of $E/F_{\mathfrak{p}}$. The global Tamagawa number is one of the factors appearing in the leading term of the $L$-function of $E/F$ in the Birch and Swinnerton-Dyer Conjecture. 

More precisely, let $A$ be a complete discrete valuation ring with perfect residue field $\kappa$ of characteristic $p$. Let $K$ denote the field of fractions of $A$, and $v$ be the normalized valuation on $K$. In order to investigate the local Tamagawa number $c_{v}$ of $E/K$, we apply Tate's Algorithm to~$E$. To streamline the process, we introduce the notion of a $v$-\KNT model for $E$. That is, a Weierstrass model for $E/K$ that is a $v$-minimal model such that its Weierstrass coefficients satisfy certain conditions (see Table \ref{pnormalmodels}). Specifically, the conditions on our $v$-\KNT models ensure that $E/K$ runs through Tate's Algorithm without having to do any additional admissible change of variables, and in addition, the models are chosen so that the computation of the local Tamagawa number $c_{v}$ is reduced to verifying additional conditions on the Weierstrass coefficients of a $v$-\KNT model. Our first result establishes that each elliptic curve admits a $v$-\KNT model:

\begin{mainprop}[Proposition~\ref{lem:pnormal}]
\label{mainprop1}Let $K$ be the field of fractions of a complete discrete valuation ring with a perfect residue field of prime characteristic. If an elliptic curve over $K$ is described by a Weierstrass model $E$, then there exists a $v$-\KNT model $E^{\prime}$ such that $E$ and $E^{\prime}$ are $K$-isomorphic. Moreover, there are 
explicit conditions (see Table~\ref{pnormalmodels}) on the Weierstrass coefficients of a $v$-\KNT model to conclude the Kodaira-N\'{e}ron type and local Tamagawa number.
\end{mainprop}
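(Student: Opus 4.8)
The plan is to produce the desired model in two stages: first invoke the standard existence of a $v$-minimal Weierstrass model, and then normalize its coefficients using the admissible changes of variables that Tate's Algorithm itself prescribes, so that the resulting model lands in the normalized state recorded in Table~\ref{pnormalmodels}. Since $A$ is a complete discrete valuation ring, starting from any Weierstrass model for $E/K$ one passes to a $v$-minimal one by the standard theory of minimal models: minimality is detected by the transformation law $\Delta' = u^{12}\Delta$ together with the valuations of the $c_4, c_6$ invariants, and a model minimizing $v(\Delta)$ exists.

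Next I would run Tate's Algorithm on this minimal model. The algorithm proceeds by case distinctions on the valuations of the Weierstrass coefficients and of the associated quantities $b_2, b_4, b_6, b_8, c_4, c_6, \Delta$, and at several steps it directs one to translate $x \mapsto x + r$ and $y \mapsto y + sx + t$ with $r, s, t \in A$ so that a distinguished singular point moves to the origin and the lowest-order coefficients acquire the expected valuations. These are admissible changes of variables over $A$, so they preserve $v$-minimality. The conditions defining a $v$-\KNT model in Table~\ref{pnormalmodels} are precisely the cumulative normalization achieved at the terminal step of each branch of the algorithm; carrying out the prescribed translations therefore yields a $K$-isomorphic model $E'$ satisfying the table's hypotheses.

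For the \emph{moreover} claim I would verify, branch by branch through the Kodaira--N\'eron classification — good reduction ($I_0$), multiplicative reduction ($I_n$), and the additive types $II, III, IV, I_0^*, I_n^*, IV^*, III^*, II^*$ — that the coefficient conditions listed in Table~\ref{pnormalmodels} both hold for a $v$-\KNT model and suffice to read off the Kodaira type, together with the factorization data (the splitting of tangent directions, or of the components of the special fiber, over $\kappa$) that determines the local Tamagawa number $c_v$. Because a $v$-\KNT model is already in normalized position, each value of $c_v$ reduces to counting roots in $\kappa$ of an explicit quadratic or cubic, or to a single congruence on a coefficient.

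The main obstacle I anticipate is in the additive cases where Tate's Algorithm must solve congruences to locate the singular point and to split off components — for instance, finding a root modulo $\p$ of the relevant quadratic in the $I_n^*$ types or of the cubic in type $I_0^*$, and then lifting it through successive approximations. Perfectness of $\kappa$ is what guarantees that these roots exist and lift, but the bookkeeping becomes delicate in residue characteristics $2$ and $3$, where the reductions of $c_4, c_6, \Delta$ no longer determine the type as cleanly and the normalizing translations interact with the extra $b$-invariant terms. Organizing these low-characteristic subcases so that the single set of conditions in Table~\ref{pnormalmodels} still captures every branch is the part requiring the most care.
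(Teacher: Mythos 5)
Your overall strategy --- pass to a $v$-minimal model, normalize by admissible changes of variables, then verify the table case by case --- matches the paper's, and you correctly identify perfectness of $\kappa$ and residue characteristic $2$ as the crux. But there is a genuine gap in the middle step: you assert that the conditions in Table~\ref{pnormalmodels} are ``precisely the cumulative normalization achieved at the terminal step of each branch'' of Tate's Algorithm, so that carrying out the algorithm's own prescribed translations already lands you in a \KNT model. This is false, and the paper says as much in the introduction: the \KNT conditions are deliberately \emph{stronger} than what Tate's Algorithm produces. For example, for $\operatorname{typ}(E)=\mathrm{I}_{n>0}$ in residue characteristic $2$, the algorithm terminates as soon as $v(b_2)=0$, with only $\mathcal{V}(E)=(0,0,1,1,1)$; reaching $\left(=0,0,\frac{n+1}{2},\frac{n+1}{2},=n\right)$ requires an iteration that is nowhere in the algorithm --- alternately applying $[1,0,0,a_1^{-1}a_4]$ and $[1,-a_1^{-1}a_3,0,0]$, each pass raising $\min\{v(a_3),v(a_4)\}$ by $v(2)>0$, and then invoking the discriminant identity $\Delta=a_3^2k_1+a_4^2k_2+a_3a_4k_3+a_6k_4$ (Lemma~\ref{Lem:Inmodel}) to force $v(a_6)=n$ once the threshold $\frac{n+1}{2}$ is crossed. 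The same issue arises for $\mathrm{I}_0$ in characteristic $2$, and for all the exact-equality conditions (e.g.\ $v(a_4)=\frac{n}{2}$ for even $n$, $v(a_3)=\frac{n+3}{2}$ for $\mathrm{I}^*_{n\text{ odd}}$), which are obtained by completing a square via surjectivity of Frobenius on $\kappa$ (Lemma~\ref{Lemmachar2squares}) rather than by any step of the algorithm.

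Concretely, your proposal is missing two ingredients: (i) the termination argument for the valuation-raising iterations, which depends on $v(2)>0$ and on controlling $v(\Delta)$ in terms of the coefficient valuations, and (ii) the explicit square-completions and translations that convert inequalities into the exact equalities the table demands. These are not bookkeeping refinements of Tate's Algorithm; they are the substance of the existence proof, and without them the ``Moreover'' clause also fails, since the Tamagawa-number criteria in the table (e.g.\ $a_1^{-2}a_2\in\operatorname{Im}T$, or the sign of $\bigl(\frac{a_{6,n+3}}{\pi}\bigr)$) are only valid for models already in the strongly normalized position.
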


This result was inspired by work of Cremona and Sadek \cite[Proposition~2.2]{CromonaSadek2020}, who consider Weierstrass models of elliptic curves over $\mathbb{Q}_{p}$ such that Tate's Algorithm terminates without requiring any additional change of variables. For our purposes, our $v$-\KNT models are not restricted to $\mathbb{Q}_{p}$, and further conditions have been imposed to ease the computation of the local Tamagawa number. Having established the existence of $v$-\KNT models, we then turn our attention to determining the local Tamagawa number $c_{v}^{d}$ of $E^{d}$ from the Weierstrass coefficients of a $v$-\KNT model for $E$. We note that by Proposition~\ref{mainprop1}, it suffices to determine a $v$-\KNT model of $E^d$.

In Section \ref{Sec:oddp}, we find the local Tamagawa number of $E^{d}$ when the residue characteristic of $A$ is odd. Our main result in this setting is the following:
\begin{mainthm}[Theorem~\ref{tamanot2}]
\label{mainthm1}Let $K$ be the field of fractions of a complete discrete valuation ring with perfect residue field of odd characteristic. Suppose an elliptic curve over $K$ is described by a $v$-\KNT model~$E$. Then, there are 
explicit conditions on the Weierstrass coefficients of~$E$ (see Table~\ref{tab:twistspodd}) to determine the local Tamagawa numbers $c_{v}$ and $c_{v}^{d}$ of $E$ and $E^{d}$, respectively.
\end{mainthm}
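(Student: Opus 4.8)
The plan is to reduce the computation of the local data of $E^{d}$ to that of a $v$-\KNT model, and then invoke Proposition~\ref{mainprop1} together with the conditions of Table~\ref{pnormalmodels}. As already noted, it suffices to exhibit a $v$-\KNT model for $E^{d}$ whose Weierstrass coefficients are expressed in terms of those of the given model $E$. Since $d\in K^{\times}/(K^{\times})^{2}$ and $v$ is discrete, I would first normalize $d$ so that $v(d)\in\{0,1\}$; the two cases correspond to the twist by a unit (an unramified quadratic character) and the twist by a uniformizer (a ramified quadratic character).

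Because the residue characteristic is odd, I would pass to a model of the form $y^{2}=x^{3}+a_{2}x^{2}+a_{4}x+a_{6}$, for which the quadratic twist is given explicitly by $E^{d}\colon y^{2}=x^{3}+d\,a_{2}x^{2}+d^{2}a_{4}x+d^{3}a_{6}$. Under this operation the standard invariants transform as $c_{4}\mapsto d^{2}c_{4}$, $c_{6}\mapsto d^{3}c_{6}$, and $\Delta\mapsto d^{6}\Delta$, while $j$ is preserved. These transformation rules are the computational engine of the proof: together with $v(d)\in\{0,1\}$ they control the valuations of the invariants of $E^{d}$, and hence, by Tate's Algorithm, its Kodaira-N\'{e}ron type.

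In the case $v(d)=0$ the valuations $v(c_{4}),v(c_{6}),v(\Delta)$ are unchanged, so the twisted model is again $v$-minimal and of the same Kodaira type; after a unit change of variables it is again a $v$-\KNT model, and only the local Tamagawa number can differ. Here I would track how the Frobenius action on the group of components is altered by the unramified quadratic character, which amounts to testing whether prescribed coefficients of $E$ are squares in $\kappa$ (this governs split versus nonsplit multiplicative reduction, and the Galois action on the component group for the additive fibers). In the case $v(d)=1$ the invariants gain valuation $2$, $3$, and $6$ respectively; I would verify that the resulting model is already $v$-minimal (equivalently, that no admissible change of variables by a uniformizer reduces it), and then read off the familiar shift in Kodaira type, namely $I_{n}\leftrightarrow I_{n}^{*}$ together with the exchanges $\mathrm{II}\leftrightarrow\mathrm{IV}^{*}$, $\mathrm{III}\leftrightarrow\mathrm{III}^{*}$, and $\mathrm{IV}\leftrightarrow\mathrm{II}^{*}$ of the potentially good additive fibers.

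The main obstacle is the case-by-case bookkeeping needed to guarantee that each twisted model either is, or can be made by an admissible unit change of variables into, a $v$-\KNT model satisfying the exact hypotheses of Table~\ref{pnormalmodels}, and to then pin down the resulting Tamagawa number. The delicate points are the additive fibers under the unramified twist, where the value of $c_{v}^{d}$ depends on residue-field square conditions that must be expressed intrinsically in terms of the coefficients of $E$; assembling all of these congruence and square conditions into the single explicit Table~\ref{tab:twistspodd} is the crux of the argument.
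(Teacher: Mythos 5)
Your overall strategy coincides with the paper's: take the diagonal twist $E^{d}:y^{2}=x^{3}+a_{2}dx^{2}+a_{4}d^{2}x+a_{6}d^{3}$, produce a $v$-\KNT model for it, and read off $\operatorname{typ}(E^{d})$ and $c^{d}$ from Proposition~\ref{lem:pnormal} and Table~\ref{pnormalmodels}; the case split $v(d)\in\{0,1\}$ and the resulting type exchanges are exactly those in the paper.

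However, one step as you state it would fail. For $v(d)=1$ you propose to ``verify that the resulting model is already $v$-minimal.'' That is true when $\operatorname{typ}(E)\in\{\mathrm{I}_{0},\mathrm{I}_{n},\mathrm{II},\mathrm{III},\mathrm{IV}\}$, but it is false precisely for the starred types: for example, for $\operatorname{typ}(E)=\mathrm{I}_{0}^{\ast}$ one has $v(\Delta^{d})=v(\Delta)+6=12$, while the minimal model of $E^{d}$ has type $\mathrm{I}_{0}$ and discriminant valuation $0$. The downward half of your exchanges $\mathrm{II}\leftrightarrow\mathrm{IV}^{\ast}$, $\mathrm{III}\leftrightarrow\mathrm{III}^{\ast}$, $\mathrm{IV}\leftrightarrow\mathrm{II}^{\ast}$, $\mathrm{I}_{n}\leftrightarrow\mathrm{I}_{n}^{\ast}$ occurs only because the twisted model is non-minimal in those cases; the paper reduces it by the admissible change of variables $\left[\pi,0,0,0\right]$ (and, for $\mathrm{I}_{n>0}^{\ast}$, a further translation as in Subcase~2b of the proof of Proposition~\ref{lem:pnormal} to restore the valuation bound on $a_{4}$). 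You need to replace the minimality verification by this explicit reduction; once that is done, the computation of $c^{d}$ via residue symbols of the coefficients of the reduced model, e.g.\ $\left(\frac{d^{3}a_{6,n+3}}{\pi}\right)=\left(\frac{da_{6,n+1}}{\pi}\right)$ when $v(d)=1$, proceeds essentially as you describe.
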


The conditions for this theorem are given explicitly in the body of the text. Theorem \ref{mainthm1} also gives conditions to determine the Kodaira-N\'{e}ron type of $E^{d}$. Since $E$ is assumed to be given by a $v$-\KNT model, the conditions on the local Tamagawa number of $E$ are given by Lemma~\ref{lem:pnormal}. Consequently, to prove Theorem~\ref{tamanot2}, it suffices to determine the local Tamagawa number $c_{v}^{d}$ of $E^{d}$. Since $E/K$ is given by a $v$-\KNT model, it has a Weierstrass model of the form $E:y^{2}=x^{3}+a_{2}x^{2}+a_{4}x+a_{6}$. We may, therefore, assume $E^{d}$ to be given by the Weierstrass model $y^{2}=x^{3}+a_{2}dx^{2}+a_{4}d^{2}x+a_{6}d^{3}$. The proof of Theorem~\ref{tamanot2} is then reduced to determining a $v$-\KNT model for $E^d$ by Proposition~\ref{mainprop1}.

Finally, we consider the case when the residue characteristic is $2$. This case is more complicated, and as a result, we restrict ourselves to the special case when $K=\Q_2$. Specifically, we prove:

\begin{mainthm}[Theorem \ref{thmQ2combined}]
\label{mainthm2} Suppose an elliptic curve over $\Q_2$ is described by a $v$-\KNT model~$E$. Then, there are
explicit conditions on the Weierstrass coefficients of $E$ (see Tables~\ref{tab:localdata-dodd} and~\ref{tab:localdata-deven}) to determine the local Tamagawa numbers $c_{v}$ and $c_{v}^{d}$ of $E$ and $E^{d}$, respectively.
\end{mainthm}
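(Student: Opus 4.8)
The plan is to reduce the theorem, exactly as in the odd residue characteristic case, to the production of a $v$-\KNT model for the quadratic twist $E^d$ and then to invoke Proposition~\ref{mainprop1}. Since $E$ is given by a $v$-\KNT model, its Kodaira--N\'{e}ron type and local Tamagawa number $c_v$ are read off directly from Table~\ref{pnormalmodels} via Lemma~\ref{lem:pnormal}, so the entire content of the theorem lies in determining $c_v^d$ together with the reduction type of $E^d$. Writing a $v$-\KNT model of $E$ by its Weierstrass coefficients, a defining equation for $E^d$ is obtained by the standard twisting substitution (concretely, from a model $y^2 = x^3 + a_2 x^2 + a_4 x + a_6$ one obtains $E^d:\,y^2 = x^3 + a_2 d x^2 + a_4 d^2 x + a_6 d^3$); the task is to transform such a model into a $v$-\KNT model, after which Proposition~\ref{mainprop1} applies once more.

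First I would fix a set of representatives for $\Q_2^\times/(\Q_2^\times)^2$ and organize the argument according to $v(d)$, which is the natural split behind Tables~\ref{tab:localdata-dodd} ($d$ odd) and~\ref{tab:localdata-deven} ($d$ even). For each square class of $d$ and each Kodaira--N\'{e}ron type attainable by a $v$-\KNT model, I would carry out the admissible transformations needed to bring the twisted equation into $v$-\KNT form. Over $\Q_2$ this is where the real work lies: completing the square and rescaling must be performed $2$-adically while preserving integrality and minimality, so the clean reductions available for odd $p$ are unavailable, and the transformation that restores a \KNT model depends delicately on the valuations and residues of $a_2, a_4, a_6$ and on $d$.

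Once a $v$-\KNT model for $E^d$ is in hand in each case, Proposition~\ref{mainprop1} converts the \KNT conditions of Table~\ref{pnormalmodels} into explicit conditions on the coefficients of $E$ and on $d$, which I would then tabulate; Ogg's formula together with the minimal discriminant valuation supplies the conductor exponent in parallel. I would also track how the reduction type migrates under the twist, recording in particular whether $\Q_2(\sqrt d)/\Q_2$ is ramified or unramified: an unramified twist preserves the Kodaira--N\'{e}ron type while possibly altering $c_v^d$ through the Frobenius action on the component group, whereas a ramified twist can interchange types (for instance $I_n \leftrightarrow I_n^{*}$, and the additive types among $II, III, IV, II^{*}, III^{*}, IV^{*}$), with the precise behaviour at $p=2$ governed by wild ramification.

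I expect the principal obstacle to be exactly this $2$-adic case analysis. Unlike the odd case, where twisting by a uniformizer acts on reduction types by a single uniform recipe, at $p=2$ the ramified quadratic extensions $\Q_2(\sqrt d)$ are wildly ramified, so good reduction can become additive and the admissible change of variables to a \KNT model is genuinely type- and $d$-dependent. Controlling the resulting subcases --- and verifying in each that the transformed model satisfies \emph{every} defining condition of a $v$-\KNT model, so that Proposition~\ref{mainprop1} genuinely applies --- is the heart of the proof; the remaining bookkeeping (reading off $c_v^d$, the Kodaira--N\'{e}ron type, and the conductor) is then routine.
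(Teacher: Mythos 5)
Your overall strategy coincides with the paper's: split into $v(d)=0$ and $v(d)=1$, bring $E^{d}$ to a \KNT model by an explicit admissible change of variables in each case, and then read off $\operatorname{typ}(E^{d})$ and $c^{d}$ from Proposition~\ref{lem:pnormal} (via Corollary~\ref{TamagawaQ2}), with $\delta^{d}$ and $f^{d}$ supplied by Lemma~\ref{Lemmadisccond}. But there is a concrete problem at the very first step. The twist model you write down, $y^{2}=x^{3}+a_{2}dx^{2}+a_{4}d^{2}x+a_{6}d^{3}$, presupposes $a_{1}=a_{3}=0$, and a \KNT model over $\Q_2$ essentially never has that form: every multiplicative type forces $v(a_{1})=0$, and the additive types impose exact valuations on $a_{1}$ or $a_{3}$. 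One cannot complete the square integrally at $2$, as you yourself note, so that formula is not available. The paper instead starts from the integral but non-minimal model \eqref{Edmodel}, namely $y^{2}=x^{3}+d\left(a_{1}^{2}+4a_{2}\right)x^{2}+d^{2}\left(8a_{1}a_{3}+16a_{4}\right)x+d^{3}\left(16a_{3}^{2}+64a_{6}\right)$, and the rescalings $[u_{j},r_{j},s_{j},w_{j}]$ of Table~\ref{FRj} (with $u_{j}\in\{1,2,4,8\}$) that restore minimality are precisely where the polynomials $P_{R,j}$ of Table~\ref{tab:PRj} and the residue conditions of Tables~\ref{tab:localdata-dodd} and~\ref{tab:localdata-deven} originate. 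Without a correct integral starting model for $E^{d}$, the rest of the plan does not get off the ground.

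The more fundamental issue is that the theorem \emph{is} the explicit list of conditions, and your proposal defers exactly that. You correctly identify that ``controlling the resulting subcases'' is the heart of the matter, but what you have written is a reduction of the theorem to the work that constitutes the theorem: the specification, for each pair (Kodaira--N\'eron type, coefficient conditions), of which isomorphism yields a \KNT model and what $\mathcal{V}(F_{R,j}^{v(d)})$ is (Tables~\ref{doddmodels} and~\ref{tab:KNtypeEdeven}, verified symbolically); the observation for $v(d)=0$ that $\delta\equiv\delta^{d}\ \operatorname{mod}12$ pins down $\delta^{d}$ within a fixed type; and the $2$-adic congruence computations expressing $v(a_{6}^{d})$ or $v(a_{2}^{d})$ --- the quantities Corollary~\ref{TamagawaQ2} actually tests --- in terms of the original $a_{i}$ and $d$. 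None of this is carried out or even sketched concretely, so the proposal as written does not establish the statement; it only confirms that the paper's method is the right one to attempt.
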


Theorem \ref{mainthm2} further provides conditions to determine the Kodaira-N\'{e}ron type of $E^{d}$, as well as the conductor and minimal discriminant exponents of $E$ and $E^{d}$. Due to the length of the proof, we consider the cases corresponding to $v(d)=0$ and $v(d)=1$ separately. See Sections~\ref{sec5_1} and \ref{sec5_2}, respectively, for the proof of these cases.

In what follows, we will work over local fields where the normalized valuation $v$ will be clear from context. Consequently, we omit subscripts for quantities such as $c_v$ and simply write $c$. We note that the explicit determination of the Kodaira-N\'{e}ron types of $E$ and $E^{d}$ in the setting of a complete discrete valuation ring with perfect residue field of prime characteristic was considered by Comalada \cite{Comalada}. This work was motivated by earlier work of Silverman \cite{SilMin}, which asked how the minimal discriminant valuation of $E$ and $E^d$ are related in the case when $v(d)=1$ and either $E$ has bad reduction or the residue characteristic is $2$. With regards to this question, Silverman states that \textit{any proof which computes directly with Weierstrass equations will need to consider a large number of cases.} This was indeed the case in Comalada's work, where he approached the question via Weierstrass equations. Similarly, our treatment of the Tamagawa numbers of $E$ and $E^d$ results in many cases to consider as we approach the question by focusing on a \KNT model for $E$. In the process of proving Theorem~\ref{mainthm2}, we uncovered several errors in \cite{Comalada} in the case when the residue characteristic is $2$. Theorem~\ref{mainthm2} corrects the case when $K=\Q_2$. More specifically, the following possible Kodaira-N\'{e}ron types after quadratic twists are missing in \cite[Tables~I~and~II]{Comalada}: $\rm{I}_0 \rightarrow \rm{I}_0, \rm{I}_n \rightarrow \rm{I}_n, \rm{IV} \rightarrow \rm{IV}, \rm{IV}^* \rightarrow \rm{IV}^* $ and a few cases of $\rm{I}_n^*$ when $v(d)=0$, and $\rm{I}_0 \rightarrow \rm{I}_0, \rm{I}_n \rightarrow \rm{I}_n$ when $v(d)=1$. We further note that the authors have conducted a literature review of articles citing~\cite{Comalada}, and it appears that no articles have used the incorrect results from the aforementioned work.
\section{Preliminaries}

We commence with some basic facts about elliptic curves. See \cite{Silverman2009,Silverman1994} for further details. Let $K$ be the field of fractions of a complete discrete valuation ring $A$ with perfect residue field $\kappa$. In what follows, we let $\pi$ be a uniformizer for $A$, and denote by $v$ the normalized valuation on $K$. Let $E/K$ be the elliptic curve given by the (affine) Weierstrass model
\[
E:y^{2}+a_{1}xy+a_{3}y=x^{3}+a_{2}x^{2}+a_{4}x+a_{6},
\]
where each $a_{i}\in K$. We then define
\[%
\begin{tabular}
[c]{lllll}%
$b_{2}=a_{1}^{2}+4a_{2},$ & $\qquad$ & $b_{4}=2a_{4}+a_{1}a_{3},$ &  &
$b_{6}=a_{3}^{2}+4a_{6},$\\
\multicolumn{3}{l}{$b_{8}=a_{1}^{2}a_{6}+4a_{2}a_{6}-a_{1}a_{3}a_{4}%
+a_{2}a_{3}^{2}-a_{4}^{2},$} & $\qquad$ & $\Delta=9b_{2}b_{4}b_{6}-b_{2}%
^{2}b_{8}-8b_{4}^{3}-27b_{6}^{2}.$
\end{tabular}
\]
We say that $E$ is $K$-isomorphic to an elliptic curve $E^{\prime}$ if there exists a morphism $\psi:E\rightarrow E^{\prime}$ defined by $\psi(x,y)=\left(  u^{2}x+r,u^{3}y+u^{2}sx+w\right)  $, where $u,r,s,w\in K$ with $u\neq0$. In this case, we write $\psi=\left[  u,r,s,w\right]  $. We say that $E$ is given by an \textit{integral Weierstrass model} if $v(a_{i})\geq0$ for each~$i$. We say that $E$ is given by a \textit{minimal model} if $E$ is given by an integral Weierstrass model that has the property that $v(\Delta)$ is minimal over all elliptic curves given by integral Weierstrass models that are $K$-isomorphic to $E$.

In our investigation, we will consider elliptic curves $E/K$ whose Weierstrass coefficients $a_i$ satisfy certain conditions. To ease presentation, we utilize the notation used in \cite{CromonaSadek2020} and set
\[
\mathcal{V}(E)=\left(  n_{1},n_{2},n_{3},n_{4},n_{6}\right)
\]
if $v(a_{i})\geq n_{i}$ for each $i$. If equality holds, say at $n_{2}$ and
$n_{4}$, then we write%
\[
\mathcal{V}(E)=\left(  n_{1},=n_{2},n_{3},=n_{4},n_{6}\right)  .
\]
If an additional condition must hold, say $v(\Delta)=6$, then we write
\[
\mathcal{V}(E)=\left(  n_{1},n_{2},n_{3},n_{4},n_{6}|v(\Delta)=6\right)  .
\]

Tate's Algorithm \cite{Tate1975} provides instructions for computing the proper regular minimal model $\mathcal{C}$ of $E$ over $A$. In particular, we obtain the following \textit{local data} for $E$:

\begin{enumerate}
\item The Kodaira-N\'{e}ron type of $E$, denoted $\operatorname*{typ}(E)$. We use Kodaira symbols to describe $\operatorname*{typ}(E)$;

\item The number of components $m$ (counted without multiplicity) defined over $\overline{\kappa}$ of the special fiber of $\mathcal{C}$;

\item The local Tamagawa number $c$ of $E$, which is the number of components of the special fiber of $\mathcal{C}$ that have multiplicity $1$ and are defined over $\kappa$.
\end{enumerate}

From Ogg's formula \cite{Ogg}, we obtain the conductor exponent $f=v(\Delta
)-m+1$, where $\Delta$ denotes the discriminant of a minimal model for $E$. In this article, whenever we refer to step $n$ of Tate's Algorithm, we are referring to step $n$ as given in Silverman's exposition \cite{Silverman1994}. In addition, we use the shorthand notation:
$$a_{i,j}=\frac{a_i}{\pi^j}.$$

Now suppose that $E/K$ and $E^{\prime}/K$ are two elliptic curves such that their $j$-invariants are equal. Then $E$ and $E^{\prime}$ are $\overline{K}$-isomorphic, and we say that $E$ is a \textit{twist} of $E^{\prime}$. In particular, by \cite[Proposition 5.4, Section X.5]{Silverman2009}, there exists a $d\in K^{\times}/\left(  K^{\times}\right)  ^{m}$ where
\[
m=\left\{
\begin{array}
[c]{cl}%
2 & j(E)\neq0, 1728,\\
4 & j(E)=1728,\\
6 & j(E)=0,
\end{array}
\right.
\]
such that $E$ and $E^{\prime}$ are $K(\sqrt[m]{d})$-isomorphic. In this article, we will focus on the case when $m=2$, i.e., $E^{\prime}$ is a \textit{quadratic twist} of $E$ by $d$. In this case, we write $E^{\prime}=E^{d}$. Now suppose that $E/K$ has the Weierstrass model
\[
E:y^{2}+a_{1}xy+a_{3}y=x^{3}+a_{2}x^{2}+a_{4}x+a_{6}.
\]
Then, following \cite[pp. 48-49]{Comalada} with $\alpha=0$, we can take the following as a Weierstrass model for the quadratic twist $E^{d}$ of $E$ by $d$:
\[
E^{d}:y^{2}=x^{3}+d\left(  a_{1}^{2}+4a_{2}\right)  x^{2}+d^{2}\left(
8a_{1}a_{3}+16a_{4}\right)  x+d^{3}\left(  16a_{3}^{2}+64a_{6}\right)  .
\]

\section{\texorpdfstring{$v$}{v}-\KNT models}
In this section, we introduce the notion of a $v$-\KNT model. When the normalized valuation $v$ is clear from context, we simply refer to these as \KNT models. To begin, we recall the following facts regarding complete discrete valuation rings whose residue characteristic is~$2$. 

\begin{lemma}
\label{Lemmachar2squares}Let $A$ be a complete discrete valuation ring with
perfect residue field $\kappa$ of characteristic~$2$ and uniformizer $\pi$.
For each $x\in A^{\times}$, there exists a $z\in A^{\times}$ such that
$z^{2}\equiv x\ \operatorname{mod}\pi$.
\end{lemma}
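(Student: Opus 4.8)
The plan is to reduce the statement entirely to the residue field $\kappa$ and to exploit that $\kappa$ is perfect of characteristic $2$. First I would pass to the quotient: writing $\bar{x}$ for the image of $x$ under the reduction map $A\rightarrow A/\pi A=\kappa$, the hypothesis $x\in A^{\times}$ guarantees that $\bar{x}$ is a nonzero element of $\kappa$.

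The key step is the observation that in a perfect field of characteristic $2$ the Frobenius endomorphism $a\mapsto a^{2}$ is surjective; this is precisely the defining property of a perfect field (the Frobenius is an automorphism). Applying this to $\bar{x}$ produces an element $\bar{z}\in\kappa$ with $\bar{z}^{2}=\bar{x}$, and since $\bar{x}\neq 0$ we automatically have $\bar{z}\neq 0$.

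Finally I would lift back to $A$: choose any $z\in A$ reducing to $\bar{z}$. Because $\bar{z}\neq 0$, the element $z$ does not lie in the maximal ideal $\pi A$, so $z\in A^{\times}$. Reducing the relation modulo $\pi$ gives $\overline{z^{2}}=\bar{z}^{2}=\bar{x}$, that is, $z^{2}\equiv x\ \operatorname{mod}\pi$, which is exactly the assertion.

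There is no genuine obstacle here; the only point demanding care is to invoke the correct characterization of perfectness, namely surjectivity of the Frobenius, rather than a statement about separability. I would also note that completeness of $A$ is not actually used for this lemma, since the conclusion is a congruence modulo $\pi$ and hence a statement purely about $\kappa$. Completeness (together with Hensel's lemma) would only enter if one sought the stronger conclusion $z^{2}=x$ on the nose, which in any case fails in residue characteristic $2$, so the congruence formulation is the natural one for later use in Tate's Algorithm.
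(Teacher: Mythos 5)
Your argument is correct and is essentially identical to the paper's own proof: both reduce to the residue field, invoke that the Frobenius $y\mapsto y^{2}$ is surjective on a perfect field of characteristic $2$, and lift a square root of $\bar{x}$ back to a unit of $A$. Your added remarks (that $\bar z\neq 0$ forces $z\in A^{\times}$, and that completeness is not actually needed) are accurate but do not change the substance.
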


\begin{proof}
Since $\kappa$ is a perfect field of characteristic $2$, the Frobenius endomorphism $f:\kappa\rightarrow\kappa$ defined by $f(y)=y^{2}$ is an automorphism of $\kappa$. Consequently, for $x\in A^{\times}$, there exists $z\in A$ such that $z^{2}\equiv x\ \operatorname{mod}\pi$.
\end{proof}

\begin{lemma}\label{char2splitpoly}
Let $\kappa$ be a field of characteristic $2$, and let $a,b,c\in\kappa$ with $ab\neq0$. Then the quadratic polynomial $ax^{2}+bx+c$ splits completely in $\kappa\lbrack x]$ if and only if $\frac{ac}{b^{2}}\in\operatorname{Im}T$, where $T:\kappa\rightarrow\kappa$ is the Artin-Schreier endomorphism defined by $T\!\left(  \alpha\right)  =\alpha^{2}+\alpha$.
\end{lemma}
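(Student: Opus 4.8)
The plan is to reduce the quadratic to the canonical Artin-Schreier form $y^{2}+y=\beta$ by an invertible linear substitution, and then to exploit the characteristic-$2$ fact that the roots of such an equation automatically come in pairs.

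First I would divide through by $a$, which is permissible since $a\neq0$ and scaling by a unit does not affect whether the polynomial splits, reducing to the monic polynomial $x^{2}+\frac{b}{a}x+\frac{c}{a}$. Because $b\neq0$, the coefficient $\frac{b}{a}$ is a nonzero element of $\kappa$, so $x=\frac{b}{a}y$ is an invertible change of variable. A direct computation, using $\frac{b}{a}\cdot\frac{b}{a}=\frac{b^{2}}{a^{2}}$ and regrouping, gives
\[
x^{2}+\frac{b}{a}x+\frac{c}{a}=\frac{b^{2}}{a^{2}}\left(y^{2}+y+\frac{ac}{b^{2}}\right).
\]
Since $\frac{b^{2}}{a^{2}}$ is a nonzero scalar, the original polynomial splits completely in $\kappa[x]$ precisely when $g(y):=y^{2}+y+\frac{ac}{b^{2}}$ splits completely in $\kappa[y]$.

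It then remains to show that $g$ splits completely if and only if $\frac{ac}{b^{2}}\in\operatorname{Im}T$. If $y_{0}\in\kappa$ is a root of $g$, then $T(y_{0})=y_{0}^{2}+y_{0}=-\frac{ac}{b^{2}}=\frac{ac}{b^{2}}$ in characteristic $2$, so $\frac{ac}{b^{2}}\in\operatorname{Im}T$; conversely, if $\frac{ac}{b^{2}}=T(y_{0})$ for some $y_{0}\in\kappa$, then $y_{0}$ is a root of $g$. The point that upgrades the existence of a single root to complete splitting is the characteristic-$2$ identity $(y_{0}+1)^{2}+(y_{0}+1)=y_{0}^{2}+y_{0}$: whenever $y_{0}$ is a root of $g$, so is $y_{0}+1$, and the two are distinct, so a single $\kappa$-rational root already forces $g$, and hence the original quadratic, to split completely over $\kappa$.

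The argument is essentially bookkeeping once the right substitution is identified; the main subtlety, and the reason the naive completion-of-the-square approach fails, is that in characteristic $2$ one cannot divide by $2$, so the usual discriminant criterion is replaced by the Artin-Schreier condition. The only genuine step is recognizing that the multiplicative substitution $x=\frac{b}{a}y$, rather than an additive shift, is what produces the clean form $y^{2}+y+\frac{ac}{b^{2}}$ and thereby isolates the invariant $\frac{ac}{b^{2}}$.
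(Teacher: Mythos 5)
Your proposal is correct and uses exactly the same substitution as the paper ($x=\tfrac{b}{a}y$, equivalently $y=axb^{-1}$), reducing to the Artin--Schreier form $y^{2}+y=\tfrac{ac}{b^{2}}$; you simply spell out the details, including the observation that one $\kappa$-rational root forces complete splitting, which the paper leaves implicit.
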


\begin{proof}
Let $y=axb^{-1}$. The result now follows since $ax^{2}+bx+c=0$ is equivalent
to $y^{2}+y=acb^{-2}$.
\end{proof}

\begin{lemma}
\label{Lem:Inmodel}
Let $K$ be the field of fractions of a complete discrete valuation ring $A$ with perfect residue field of characteristic $2$ and normalized valuation $v$. Suppose that an elliptic curve over $K$ is given by a Weierstrass model $E$. If $\operatorname*{typ}(E)=\mathrm{{I}_{n}}$ for some $n>0$, then
\begin{itemize}
    \item[$(i)$] if $\mathcal{V}(E)=\left(  =0,0,\frac{n+1}{2},\frac{n+1}{2},0\right)  $, then $v(a_{6})=n$.
\item[$(ii)$]  if $\mathcal{V}(E)=\left(=0,0,\frac{n+2}{2},0,n+1\right)  $ with $n$ even, then $v(a_{4})=\frac{n}{2}$.
\end{itemize}

Conversely, suppose $\left(  i\right)  \ \mathcal{V}(E)=\left(  =0,0,\frac
{n+1}{2},\frac{n+1}{2},=n\right)  $ or $\left(  ii\right)  \ \mathcal{V}%
(E)=\left(  =0,0,\frac{n+2}{2},=\frac{n}{2},n+1\right)  $ with $n$ even. Then
$\operatorname*{typ}(E)=\mathrm{{I}_{n}}$.
\end{lemma}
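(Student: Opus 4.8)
The plan is to reduce the whole statement to a computation of $v(\Delta)$. Since $\mathcal{V}(E)$ begins with $=0$, the coefficient $a_1$ is a unit, so $b_2=a_1^2+4a_2$ is a unit (the term $4a_2$ has valuation $2v(2)+v(a_2)\ge 2>0$ because the residue characteristic is $2$), and hence $c_4=b_2^2-24b_4$ is a unit as well. An integral Weierstrass model with $v(c_4)=0$ is automatically minimal, and such a model has good reduction when $v(\Delta)=0$ and multiplicative reduction of type $\mathrm{I}_{v(\Delta)}$ when $v(\Delta)>0$. Thus for the models considered here, $\operatorname{typ}(E)=\mathrm{I}_n$ with $n>0$ is \emph{equivalent} to $v(\Delta)=n$, and it suffices to show that $v(\Delta)=n$ holds if and only if the asserted equality holds, namely $v(a_6)=n$ in case $(i)$ and $v(a_4)=\frac{n}{2}$ in case $(ii)$.

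The engine of the argument is that $-b_2^2 b_8$ is the dominant term of $\Delta=9b_2b_4b_6-b_2^2b_8-8b_4^3-27b_6^2$. I would first record the bounds $v(b_4)\ge \frac{n+1}{2}$ and $v(b_6)\ge n+1$ in case $(i)$ (resp.\ $v(b_4)\ge \frac{n}{2}+1$ and $v(b_6)\ge n+2$ in case $(ii)$), which follow directly from the hypotheses on $a_3,a_4,a_6$ and force each of $9b_2b_4b_6$, $8b_4^3$, and $27b_6^2$ to have valuation strictly larger than $n$; here the bound $v(8)=3v(2)\ge 3$ is what controls $8b_4^3$. It then remains to pin down $v(b_8)$ for $b_8=a_1^2a_6+4a_2a_6-a_1a_3a_4+a_2a_3^2-a_4^2$: in case $(i)$ the term $a_1^2a_6$ carries the valuation of $b_8$, and in case $(ii)$ the term $a_4^2$ does. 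For the converse direction, substituting the equalities $v(a_6)=n$ (resp.\ $v(a_4)=\frac{n}{2}$) yields $v(b_8)=n$, hence $v(b_2^2b_8)=n$, and since all other terms have valuation $>n$ we get $v(\Delta)=n$, i.e.\ $\operatorname{typ}(E)=\mathrm{I}_n$.

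For the forward direction I would argue by a trichotomy on $v(a_6)$ (case $(i)$) or $v(a_4)$ (case $(ii)$) against the target value $t$ (namely $t=n$, resp.\ $t=\frac{n}{2}$). When the relevant valuation is below $t$, the governing term of $b_8$ forces $v(\Delta)=v(b_2^2b_8)<n$; when it is above $t$, every term of $\Delta$ acquires valuation $>n$, so $v(\Delta)>n$. Since $\operatorname{typ}(E)=\mathrm{I}_n$ gives $v(\Delta)=n$, both inequalities are contradictory, and the valuation must equal $t$. The main obstacle is the bookkeeping in case $(ii)$, where $b_2^2b_8$ (governed by $a_4^2$) and $8b_4^3$ (governed by $b_4=2a_4+a_1a_3$) are the only two candidate lowest-order terms, and one must verify that $v(8b_4^3)=3v(2)+3v(b_4)$ is \emph{strictly} larger than $v(b_2^2b_8)$ in every sub-case, so that no cancellation can mask the dominant term. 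Carrying out this comparison while keeping $v(2)$ arbitrary (rather than specializing to $v(2)=1$) is the one place where genuine care is needed; splitting according to whether $v(b_4)$ is controlled by $2a_4$ or by $a_1a_3$ resolves it.
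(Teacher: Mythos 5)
Your overall strategy is sound and genuinely different from the paper's. The paper does not work with the four terms $9b_2b_4b_6-b_2^2b_8-8b_4^3-27b_6^2$ directly; instead it cites a SageMath-verified regrouping $\Delta=a_3^2k_1+a_4^2k_2+a_3a_4k_3+a_6k_4$ with $v(k_2)=v(k_3)=v(k_4)=0$, from which all four claims are read off in one line each (the cross terms such as $72b_2a_4a_6$ coming from $9b_2b_4b_6$ are absorbed into $a_6k_4$, whose leading part is the unit $-b_2^3$). Your reduction of the Kodaira type to $v(\Delta)$ via $v(c_4)=0$ is also fine and in fact justifies the minimality that the paper uses implicitly. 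The trichotomy on $v(a_6)$ (resp.\ $v(a_4)$) against the target value, if carried out with correct bounds, does close the argument.

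However, the preliminary bounds you ``record'' are not consequences of the hypotheses, and the blanket claim built on them is false. In case $(i)$ the forward hypothesis gives only $v(a_6)\ge 0$, so $v(b_6)=v(a_3^2+4a_6)\ge\min\{n+1,\,2v(2)+v(a_6)\}$, which can be as small as $2v(2)$; hence $v(9b_2b_4b_6)$ can be as small as $\tfrac{n+1}{2}+2v(2)+v(a_6)$, which is $<n$ for $n$ large and $v(a_6)$ small, and likewise $27b_6^2$ need not have valuation $>n$. Symmetrically, in case $(ii)$ the bound $v(b_4)\ge\tfrac{n}{2}+1$ presupposes $v(a_4)\ge\tfrac{n}{2}$, which is the conclusion rather than a hypothesis (the term $2a_4$ contributes only $v(2)+v(a_4)$). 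So the step ``these bounds force each of $9b_2b_4b_6$, $8b_4^3$, $27b_6^2$ to have valuation strictly larger than $n$'' fails exactly in the branch of your trichotomy where the relevant valuation is below the target. The repair is to make the comparison branch-dependent: in the ``below target'' branch you must show every term other than $-b_2^2b_8$ has valuation strictly greater than $v(a_6)$ (resp.\ $2v(a_4)$) --- which is true, since each such term carries at least one extra factor of valuation $\ge\min\{v(2),\tfrac{n+1}{2}\}>0$ beyond the governing quantity --- while the stated ``$>n$'' bounds are only valid once the relevant valuation is at or above the target. You flag this caution for $8b_4^3$ in case $(ii)$ but not for the $b_6$-terms in case $(i)$, where the same issue arises through $4a_6$.
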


\begin{proof}
It is verified in \cite[Lemma3\_3.ipynb]{gittwists} that the discriminant $\Delta$ of $E$ can be written as
\begin{equation}
\Delta=a_{3}^{2}k_{1}+a_{4}^{2}k_{2}+a_{3}a_{4}k_{3}+a_{6}k_{4}%
,\label{InDiscvaluation}
\end{equation}
where each $k_{i}\in A$ with $v(k_{2})=v(k_{3})=v(k_{4})=0$. Now suppose that $\operatorname*{typ}(E)=\mathrm{{I}_{n}}$, so that $v(\Delta)=n$. If $\mathcal{V}(E)=\left(  =0,0,\frac{n+1}{2},\frac{n+1}{2},0\right)  $, then by (\ref{InDiscvaluation}), $v(\Delta)=n$ if and only if $v(a_{6})=n$. Similarly, if $n$ is even and $\mathcal{V}(E)=\left(  =0,0,\frac{n+2}{2},0,n+1\right)  $, then by (\ref{InDiscvaluation}), $v(\Delta)=n$ if and only if $v(a_{4})=\frac{n}{2}$.

Conversely, suppose $\left(  i\right)  \ \mathcal{V}(E)=\left(  =0,0,\frac{n+1}{2},\frac{n+1}{2},=n\right)  $ or $\left(  ii\right)  \ \mathcal{V}(E)=\left(  =0,0,\frac{n+2}{2},=\frac{n}{2},n+1\right)  $ with $n$ even. Then, for a uniformizer $\pi$ of $A$, $b_{2}=a_{1}^{2}+4a_{2}\equiv a_{1}^{2}\ \operatorname{mod}\pi$. Since $v(a_{1})=0$, we conclude that $v(b_{2})=0$. By Tate's Algorithm, $\operatorname*{typ}(E)=\mathrm{{I}}_{v(\Delta)}$. From our assumptions on $\mathcal{V}(E)$ and~(\ref{InDiscvaluation}), we conclude that $v(\Delta)=n$.
\end{proof}

\begin{definition}
Let $K$ be the field of fractions of a complete discrete valuation ring $A$ with perfect residue field $\kappa$ of prime characteristic and normalized valuation $v$. An elliptic curve over $K$ described by a Weierstrass model $E$ is said to be given by a \textit{$v$-\KNT model} if $\mathcal{V}(E)$ occurs in the third column of Table~\ref{pnormalmodels}.
\end{definition}

\begin{proposition}
\label{lem:pnormal}
Let $K$ be the field of fractions of a complete discrete valuation ring $A$ with perfect residue field $\kappa$ of prime characteristic and normalized valuation $v$. Suppose an elliptic curve over $K$ is described by a Weierstrass model $E$. For each Kodaira-N\'{e}ron type $\operatorname*{typ}(E)$, there exists a $K$-isomorphic elliptic curve $E^{\text{sm}}$ that is a minimal model and whose Weierstrass coefficients satisfy the valuations $\mathcal{V}(E^{\text{sm}})$ given in Table~\ref{pnormalmodels}. The Tamagawa number $c$ is then as given in the fifth column. Moreover, by $T$ we mean the Artin-Schreier endomorphism $T:\kappa \rightarrow \kappa$ from Lemma~\ref{char2splitpoly}. The table also references the polynomial $P(X)=X^{3}+a_{2,1}X^{2}+a_{4,2}dX+a_{6,3}.$

Conversely, if an elliptic curve over $K$ is described by a Weierstrass model $E$ such that $\mathcal{V}(E)$ occurs in the third column of Table~\ref{pnormalmodels}, then $E$ is given by a minimal model and $\operatorname*{typ}(E)$ is as listed in the corresponding row.
\end{proposition}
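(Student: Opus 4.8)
The plan is to run Tate's Algorithm \cite{Tate1975}, in the formulation of \cite{Silverman1994}, and to show that the admissible changes of variables it dictates can be assembled into a single transformation $\psi=[u,r,s,w]$ carrying the given model $E$ to a model $E^{\text{sm}}$ whose coefficient valuations realize the relevant row of Table~\ref{pnormalmodels}. First I would pass to an integral minimal model, whose existence follows from the completeness of $A$ together with the nonminimality (final) step of the algorithm. I would then organize the remaining argument by Kodaira-N\'eron type, since each type is read off at a definite branch of the algorithm along with the component count $m$. The guiding point is that, because $\kappa$ is perfect, each normalization the algorithm requires — translating a singular point of the special fiber to the origin, or arranging that a prescribed coefficient be a unit or be divisible by the appropriate power of $\pi$ — is realizable by a translation $[1,r,s,w]$ with $r,s,w\in A$; composing these with the minimizing scaling yields the single $\psi$.

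For the forward direction I would begin with the good and multiplicative cases. Type $\mathrm{I}_0$ is immediate from $v(\Delta)=0$. For type $\mathrm{I}_n$ with $n>0$, Lemma~\ref{Lem:Inmodel} already pins down the admissible valuation vectors $\mathcal{V}(E^{\text{sm}})$, and the Tamagawa number is $n$, or $\gcd(2,n)$, according to whether the two tangent directions at the node are rational over $\kappa$; by Lemma~\ref{char2splitpoly} this rationality is detected by membership of the relevant ratio in $\operatorname{Im}T$ when $\operatorname{char}\kappa=2$, and by a square-class condition otherwise. For the additive types $\mathrm{II}$ through $\mathrm{II}^*$, I would follow the algorithm step by step: after translating the singular point to the origin, one checks that the defining inequalities of the current step hold exactly as encoded by the third column, and that the auxiliary cubic or quadratic whose factorization separates $\mathrm{I}_n^*$ from $\mathrm{IV}$, or $\mathrm{IV}^*$ from $\mathrm{III}^*$, and which fixes the possible Tamagawa numbers, is the reduction of the polynomial $P(X)$ of the statement or of its associated quadratic. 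The entries of the fifth column then follow from the standard component counts together with the splitting criteria of Lemmas~\ref{char2splitpoly} and~\ref{Lemmachar2squares}.

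The converse is the bookkeeping dual. Assuming $\mathcal{V}(E)$ lies in the third column, I would verify that each hypothesis tested by Tate's Algorithm is met with exactly the stated (in)equalities, so that the algorithm advances to the advertised type and terminates without ever invoking a further change of variables — precisely the defining property of a $v$-\KNT model, which simultaneously shows $E$ is minimal, since the algorithm never reaches the nonminimality step. Here the exact equalities (such as $=0$ or $=n$) are what prevent the algorithm from descending to a smaller $v(\Delta)$, and hence what force minimality and the claimed type.

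I expect the genuine difficulty to be concentrated in residue characteristic $2$, where one cannot complete the square and must track the full set of coefficients, in particular $a_1$ and $a_3$, through every translation. In this case the clean square-class dichotomy governing the Tamagawa numbers is replaced by the Artin-Schreier condition of Lemma~\ref{char2splitpoly}, and ensuring that a normalizing translation actually exists within $A$ — rather than merely over $\overline{\kappa}$ — relies on the perfectness of $\kappa$ through Lemmas~\ref{Lemmachar2squares} and~\ref{char2splitpoly}. The most delicate point is to check, uniformly across all types, that the translation prescribed at each step does not disturb the valuation conditions already secured at earlier steps, so that the composite $\psi$ genuinely lands in the single standard form recorded in the table.
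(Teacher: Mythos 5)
Your outline has the same overall architecture as the paper's proof: a case-by-case pass through Tate's Algorithm in Silverman's formulation, with each normalization realized by an explicit admissible change of variables, the splitting criteria reduced to Lemma~\ref{char2splitpoly} (together with Lemma~\ref{Lemmachar2squares}) in residue characteristic $2$ and to square classes otherwise, and the converse checked by rerunning the algorithm on the tabulated valuation vectors. As written, though, the proposal stops at the level of a plan and defers exactly the steps that carry the content of Proposition~\ref{lem:pnormal}, namely the explicit isomorphisms and the verification that they land on the precise valuation vectors of Table~\ref{pnormalmodels}.

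Two of the deferred steps are genuine missing ideas rather than routine bookkeeping. For $\mathrm{I}_{n>0}$ in residue characteristic $2$, the model with $v(a_4)\geq\frac{n+1}{2}$ is not produced by a single translation: one must alternate a $w$-translation (killing $a_4$ at the cost of $v(a_3)=k+v(2)$) with an $r$-translation (killing $a_3$ at the cost of $v(a_4)\geq k+2v(2)$) and argue that the iteration terminates because each round raises the relevant valuation by at least $2v(2)\geq 2$; only then does Lemma~\ref{Lem:Inmodel} force $v(a_6)=n$, and a further translation by a square root supplied by Lemma~\ref{Lemmachar2squares} is needed to reach $v(a_6)\geq n+1$ with $v(a_4)=\frac{n}{2}$ in the even case. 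Your remark that Lemma~\ref{Lem:Inmodel} ``already pins down'' the valuation vectors skips all of this. More seriously, for $\mathrm{I}_{n>0}^{\ast}$ in residue characteristic $2$ the exact equality $v(a_3)=\frac{n+3}{2}$ (resp.\ $v(a_4)=\frac{n+4}{2}$) is not arranged by any change of variables at all: it is forced by a contradiction, since if the valuation were larger the auxiliary quadratic $f(t)$ would reduce, by perfectness of $\kappa$, to a perfect square $(kt+l)^2$ and hence have a double root, contrary to what Tate's Algorithm guarantees at that step. A strategy built solely out of ``composing normalizing translations'' would not discover this step, and without it the equality constraints for $\mathrm{I}_n^{\ast}$ in the table, and hence the Artin--Schreier criteria deciding $c\in\{2,4\}$, are not justified. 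Finally, the point you flag as delicate at the end --- that later translations do not disturb valuations secured earlier --- is precisely what must be verified coefficient by coefficient for each explicit isomorphism (the paper does this symbolically); naming the difficulty is not the same as resolving it.
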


{\begingroup \footnotesize
\renewcommand{\arraystretch}{1.5}
 \begin{longtable}{lllcc}
 	\caption{$v$-\KNT models and conditions to determine Tamagawa number~$c$}\\
	\hline
$\operatorname*{typ}(E)$ & $\operatorname*{char}\kappa$ & $\mathcal{V}(E)$ &
Conditions & $c$\\
	\hline
	\endfirsthead
	\hline
$\operatorname*{typ}(E)$ & $\operatorname*{char}\kappa$ & $\mathcal{V}(E)$ &
Conditions & $c$\\
	\hline
	\endhead
	\hline 
	\multicolumn{3}{r}{\emph{continued on next page}}
	\endfoot
	\hline 
	\endlastfoot
%&  &  &\\
$\rm{I}_{0}$ & $2$ & $\left(  =0,0,2,1,=0\right)  $ &  & $1$\\\cmidrule{3-5}
&  & $\left(  =0,0,2,=0,1\right)  $ &  & $1$\\\cmidrule{3-5}
&  & $\left(  1,1,=0,0,0\right)  $ &  & $1$\\\cmidrule{2-5}
& $\neq2$ & $\left(  \infty,0,\infty,0,0|v(\Delta)=0\right)  $ &  & $1$\\\hline
$\rm{I}_{n>0\text{ odd}}$ & $2$ & $\left(  =0,0,=\frac{n+1}{2},\frac{n+1}{2},=n\right)  $ & $a_1^{-2}a_2 \in \operatorname{Im}T$ & $n$\\\cmidrule{4-5}
 &  & & $a_1^{-2}a_2 \not \in \operatorname{Im}T$ & $1$\\\hline

$\rm{I}_{n>0\text{ even}}$ & $2$ & $\left(  =0,0,\frac{n+2}{2},=\frac{n}{2},n+1\right)  $ & $a_1^{-2}a_2 \in \operatorname{Im}T$ & $n$\\\cmidrule{4-5}
 &  & & $a_1^{-2}a_2 \not \in \operatorname{Im}T$ & $2$\\\hline

$\rm{I}_{n>0}$ & $\neq2$ & $\left(  \infty,=0,\infty,\frac{n+3}{2},=n\right)  $ & $\left(
\frac{a_{2}}{\pi}\right)  =1$ & $n$\\\cmidrule{4-5}
&  &  & $\left(  \frac{a_{2}}{\pi}\right)  =-1$ & $2-(n\ \operatorname{mod}2)$\\\hline

$\rm{II}$ & $2$ & $\left(  1,1,1,1,=1\right)  $ &  & $1$\\\cmidrule{2-5}
& $\neq2$ & $\left(  \infty,1,\infty,1,=1\right)  $ &  & $1$\\\hline
$\rm{III}$ & $2$ & $\left(  1,1,1,=1,2\right)  $ &  & $2$\\\cmidrule{2-5}
& $\neq2$ & $\left(  \infty,1,\infty,=1,2\right)  $ &  & $2$\\\hline
%%%%%%%
$\rm{IV}$ & $2$ & $\left(  1,1,=1,2,2\right)  $ & $a_3^{-2}a_6 \not \in \operatorname{Im}T$ & $1$\\\cmidrule{4-5}
 & & & $a_3^{-2}a_6 \in \operatorname{Im}T$ & $3$ \\\cmidrule{2-5}

& $\neq2$ & $\left(  \infty,1,\infty,2,=2\right)  $ & $\left(  \frac{a_{6,2}}{\pi}\right)  =-1$ & 1\\\cmidrule{4-5}
&  &  & $\left(  \frac{a_{6,2}}{\pi}\right)  =1$ & 3\\\hline

%%%%%%%%
$\rm{I}_{0}^{\ast}$ & $2$ & $\left(  1,1,2,3,=3\right)  $ &  &$1+\#\left\{  \alpha\in\kappa\mid P(\alpha)=0\right\}  $ \\\cmidrule{2-5}
& $\neq2$ & $\left(  \infty,1,\infty,2,3\mid v(\Delta)=6\right)  $ &  &
$1+\#\left\{  \alpha\in\kappa\mid P(\alpha)=0\right\}  $\\\hline
%%%%%%%%
$\rm{I}_{n>0\text{ odd}}^{\ast}$ & $2$ & $\left(  1,=1,=\frac{n+3}{2},\frac{n+5}%
{2},n+3\right)  $ & $a_3^{-2}a_6 \not \in \operatorname{Im}T$  & $2$ \\\cmidrule{4-5}
&  &  & $a_3^{-2}a_6 \in \operatorname{Im}T$ & $4$ \\\cmidrule{2-5}
& $\neq2$ & $\left(  \infty,=1,\infty,\frac{n+5}{2},=n+3\right)  $ & $\left(
\frac{a_{6,n+3}}{\pi}\right)  =-1$ & $2$\\\cmidrule{4-5}
&  &  & $\left(  \frac{a_{6,n+3}}{\pi}\right)  =1$ & $4$\\\hline
%%%%%%%%%%
$\rm{I}_{n>0\text{ even}}^{\ast}$ & $2$ & $\left(  1,=1,\frac{n+4}{2},=\frac
{n+4}{2},n+3\right)  $ & $\pi a_4^{-2}a_6 \not \in \operatorname{Im}T$ & $2$ \\\cmidrule{4-5}
&  &  & $\pi a_4^{-2}a_6 \in \operatorname{Im}T$ & $4$ \\\cmidrule{2-5}
& $\neq2$ & $\left(  \infty,=1,\infty,\frac{n+6}{2},=n+3\right)  $ & $\left(
\frac{-a_{2}^{-1}a_{6,n+2}}{\pi}\right)  =-1$ & $2$\\\cmidrule{3-5}
&  &  & $\left(  \frac{-a_{2}^{-1}a_{6,n+2}}{\pi}\right)  =1$ & $4$\\\hline
%%%IV*
$\rm{IV}^{\ast}$ & $2$ & $\left(  1,2,=2,3,4\right)  $ & $a_3^{-2}a_6 \not \in \operatorname{Im}T$ & $1$\\\cmidrule{4-5}
&  &  & $a_3^{-2}a_6 \in \operatorname{Im}T$ & $3$ \\\cmidrule{2-5}
& $\neq2$ & $\left(  \infty,2,\infty,3,=4\right)  $ & $\left(  \frac{a_{6,4}}{\pi}\right)  =-1$ & $1$\\\cmidrule{4-5}

&&&$\left(  \frac{a_{6,4}}{\pi}\right)  =1$&$3$ \\\hline
% &  &  &  & \\
%%%III*
$\rm{III}^{\ast}$ & $2$ & $\left(  1,2,3,=3,5\right)  $ &  &$2$ \\\cmidrule{2-5}
& $\neq2$ & $\left(  \infty,2,\infty,=3,5\right)  $ &  & $2$\\\hline
%%%II*
$\rm{II}^{\ast}$ & $2$ & $\left(  1,2,3,4,=5\right)  $ &  &$1$ \\\cmidrule{2-5}
& $\neq2$ & $\left(  \infty,2,\infty,4,=5\right)  $ &  &$1$
\label{pnormalmodels}
\end{longtable}\endgroup}

\begin{proof}
Suppose an elliptic curve over $K$ is described by a minimal Weierstrass model $E:y^{2}+a_{1}xy+a_{3}y=x^{3}+a_{2}x^{2}+a_{4}x+a_{6}$. Let $\Delta$ denote the minimal discriminant
of $E$ and let $\pi$ be a uniformizer for $K$. When $\operatorname*{char}%
\kappa\neq2$, we may further assume that $a_{1}=a_{3}=0$ by instead
considering the $K$-isomorphic elliptic curve obtained from $E$ via the
isomorphism $\left[  1,0,\frac{-a_{1}}{2},\frac{-a_{3}}{2}\right]  $.
Throughout the proof, we will consider various admissible change of variables
resulting in $K$-isomorphic elliptic curves $E^{\prime}$ and $E^{\prime\prime
}$. We denote the Weierstrass coefficients of these elliptic curves by
$a_{i}^{\prime}$ and $a_{i}^{\prime\prime}$, respectively. The various claims
on the Weierstrass coefficients of $E^{\prime}$ (or $E^{\prime\prime}$) were
verified in SageMath \cite{sagemath}, and the reader can find this
verification in the companion to this proof \cite[Proposition3\_4.ipynb]{gittwists}. Below, we
consider each of the possible Kodaira-N\'{e}ron types separately. For each
case, we consider subcases corresponding to whether $\operatorname*{char}%
\kappa$ is even or odd. When $\operatorname*{char}\kappa=2$, we set
$T:\kappa\rightarrow\kappa$ to be the Artin-Schreier endomorphism. We note
that the verification of the converse is omitted, as this is easily checked
via a case-by-case analysis using Tate's Algorithm.

\textbf{Case 1.} Suppose $\operatorname*{typ}(E)=\mathrm{I}_{0}$. If
$\operatorname*{char}\kappa\neq2$, then there is nothing to show as we may
assume that $\mathcal{V}(E)=\left(  \infty,0,\infty,0,0\right)  $ with
$v(\Delta)=0$. For $\operatorname*{char}\kappa=2$, we consider two subcases.

\qquad\textbf{Subcase 1a.} Suppose $\operatorname*{char}\kappa=2$ and
$v(a_{1})=0$. Then $a_{1}^{-1}\in A$, and the isomorphism $\left[
1,3a_{1}^{-1}a_{3},0,0\right]  $ applied to $E$ results in an elliptic curve
$E^{\prime}$ with $a_{1}^{\prime}=a_{1}$ and $a_{3}^{\prime}=4a_{3}$. So
without loss of generality, we may assume that $\mathcal{V}(E)=\left(
=0,0,2,0,0\right)  $. It is now verified that
\[
\Delta\equiv a_{1}^{4}\left(  a_{1}^{2}a_{6}+a_{4}^{2}\right)
\ \operatorname{mod}\pi.
\]
Thus, $v(\Delta)=0$ if and only if $v(a_{1}^{2}a_{6}+a_{4}^{2})=0$. In
particular, if $v(a_{4})\geq1$, then $v(a_{6})=0$, and if $v(a_{6})\geq1$,
then $v(a_{4})=0$.

Now suppose $v(a_{4}a_{6})=0$, and let $E^{\prime}$ be the elliptic curve
obtained from $E$ via the isomorphism $\left[  1,0,0,-a_{1}^{-1}a_{4}\right]
$. Then $a_{3}^{\prime}=a_{1}^{-1}\left(  a_{1}a_{3}-2a_{4}\right)  $,
$a_{4}^{\prime}=2a_{4}$, and $a_{6}^{\prime}=a_{1}^{2}\left(  a_{1}a_{3}%
a_{4}+a_{1}^{2}a_{6}-a_{4}^{2}\right)  $. By our assumptions, we have that
$v(a_{6}^{\prime})=0$ and thus, $\mathcal{V}(E^{\prime})=\left(
=0,0,1,1,=0\right)  $. If $v(a_{3}^{\prime})\geq2$, then we are done. So
suppose that $v(a_{3}^{\prime})=1$. Next, consider the elliptic curve
$E^{\prime\prime}$ obtained from $E^{\prime}$ via the isomorphism
$[1,a_{1}^{-1}a_{3}^{\prime},0,0]$. Then $a_{3}^{\prime\prime}=2a_{3}^{\prime
}$ and $a_{4}^{\prime\prime}=a_{1}^{-2}\left(  2a_{1}a_{2}a_{3}^{\prime}%
+a_{1}^{2}a_{4}^{\prime}+3a_{3}^{\prime2}\right)  $. In particular,
$\mathcal{V}(E^{\prime\prime})=\left(  =0,0,2,1,0\right)  $.

\qquad\textbf{Subcase 1b.} Suppose $\operatorname*{char}\kappa=2$ and
$v(a_{1})\geq1$. Then $\Delta\equiv a_{3}^{4}\ \operatorname{mod}\pi$, which
implies that $v(a_{3})=0$. Now let $E^{\prime}$ be the elliptic curve obtained
from $E$ via the isomorphism $\left[  1,a_{2},0,0\right]  $. Then
$\mathcal{V}(E^{\prime})=\left(  1,1,=0,0,0\right)  $.

\textbf{Case 2.} Suppose $\operatorname*{typ}(E)=\mathrm{I}_{n}$ for $n>0$.
By Tate's Algorithm, we have that $v(b_{2})=\left(  a_{1}^{2}+4a_{2}\right)
=0$ and $v(\Delta)=n$.

\qquad\textbf{Subcase 2a.} Suppose $\operatorname*{char}\kappa=2$. By Tate's Algorithm, we may assume that $\mathcal{V}(E)=\left(  0,0,1,1,1\right)  $. Since $v(b_{2})=0$, we deduce that $a_{1}\in A^{\times}$. We may then assume that $a_{3}=0$ after considering the $K$-isomorphic elliptic curve obtained from $E$ via the isomorphism $\left[  1,-a_{1}^{-1}a_{3},0,0\right]  $. Since $a_{1}^{\prime}=a_{1}$, we may assume that $\mathcal{V}(E)=\left(=0,0,\infty,1,1\right)  $. We first claim that there is an elliptic curve $E^{\prime}$ that is $K$-isomorphic to $E$ such that $\mathcal{V}(E^{\prime })=\left(  =0,0,\frac{n+1}{2},\frac{n+1}{2},=n\right)  $. 

If $v(a_{4})\geq\frac{n+1}{2}$, then Lemma \ref{Lem:Inmodel} implies that $v(a_{6})=n$. So suppose $v(a_{4})=k<\frac{n+1}{2}$, and let $E^{\prime}$ be the elliptic curve obtained from $E$ via the isomorphism $\left[1,0,0,a_{1}^{-1}a_{4}\right]  $. Then, $a_{1}^{\prime}=a_{1},$ $a_{3}^{\prime}=2a_{1}^{-1}a_{4}$, and $a_{4}^{\prime}=0$. In particular, $\mathcal{V}(E^{\prime})=\left(  =0,0,=k+v(2),\infty,1\right)  $. If $v(2)+k\geq\frac{n+1}{2}$, then Lemma \ref{Lem:Inmodel} implies that $v(a_{6}^{\prime})=n$, which shows that $E^{\prime}$ is our desired claimed model.

By the above, for the claim, it remains to consider the case when $\mathcal{V}(E)=\left(  =0,0,=k+v(2),\infty,1\right)  $ with $v(2)+k<\frac{n+1}{2}$. In this case, we consider the elliptic curve $E^{\prime}$ obtained from $E$ via the isomorphism $\left[  1,-a_{1}^{-1}a_{3},0,0\right]  $. Then $a_{1}^{\prime}=a_{1},$ $a_{3}^{\prime}=0,$ and $a_{4}^{\prime}=a_{1}^{-2}a_{3}\left(  3a_{3}-2a_{1}a_{2}\right)  $. Consequently, $\mathcal{V}(E^{\prime})=\left(  =0,0,\infty,k+2v(2),1\right)  $. If $v(a_{4}^{\prime})\geq\frac{n+1}{2}$, then $E^{\prime}$ is our claimed elliptic curve since $v(a_{6}^{\prime})=n$ by Lemma \ref{Lem:Inmodel}. If $v(a_{4}^{\prime})<\frac{n+1}{2}$, we proceed as above until we obtain a $K$-isomorphic elliptic curve $E^{\prime\prime}$ such that $\mathcal{V}(E^{\prime\prime})=\left(  =0,0,\frac{n+1}{2},\frac{n+1}{2},=n\right)$.

With the claim established, we can now assume that $\mathcal{V}(E)=\left(=0,0,\frac{n+1}{2},\frac{n+1}{2},=n\right)  $. We now consider the cases corresponding to $n$ being odd or even.

Suppose $n$ is odd. If $v(a_{3})=\frac{n+1}{2}$, then $E$ is given by a \KNT model. So suppose that $v(a_{3})>\frac{n+1}{2}$ and set $r=\pi^{\frac{n+1}{2}}$. Then, the $K$-isomorphism $\left[  1,r,0,0\right]  $ results in the elliptic curve $E^{\prime}$ with Weierstrass coefficients
\[
\left(  a_{1}^{\prime},a_{2}^{\prime},a_{3}^{\prime},a_{4}^{\prime}%
,a_{6}^{\prime}\right)  =\left(  a_{1},a_{2}+3r,a_{1}r+a_{3},a_{4}%
+2a_{2}r+3r^{2},a_{6}+a_{2}r^{2}+r^{3}+a_{4}r\right)  .
\]
By inspection, $\mathcal{V}(E^{\prime})=\left(  =0,0,=\frac{n+1}{2},\frac{n+1}{2},=n\right)  $. Thus, $E^{\prime}$ is our desired \KNT model.

Now suppose that $n$ is even, so that $\mathcal{V}(E)=\left(  =0,0,\frac{n+2}{2},\frac{n+2}{2},=n\right)  $. Since $v(a_{6})=n$, we can write $a_{6}=\pi^{n}\hat{a}_{6}$ for some $\hat{a}_{6}\in A^{\times}$. By Lemma \ref{Lemmachar2squares}, there exists $\hat{w}\in A^{\times}$ with $\hat{w}^{2}\equiv\hat{a}_{6}\ \operatorname{mod}\pi$. Now set $w=\pi^{\frac{n}{2}}\hat{w}$, and consider the elliptic curve $E^{\prime}$ obtained from $E$ via the $K$-isomorphism $\left[  1,0,0,w\right]  $. Then,
\[
E^{\prime}:y^{2}+a_{1}xy+\left(  a_{3}+2w\right)  y=x^{3}+a_{2}x^{2}+\left(
a_{4}-a_{1}w\right)  x+a_{6}-w^{2}+a_{3}w.
\]
Now observe that
\[
\frac{a_{6}-w^{2}}{\pi^{n}}=\hat{a}_{6}-\hat{w}^{2}\equiv0\ \operatorname{mod}%
\pi.
\]
It follows that $E^{\prime}$ is our desired \KNT model since $\mathcal{V}(E)=\left(  =0,0,\frac{n+2}{2},=\frac{n}{2},n+1\right)  $.

The above establishes that we may assume that $E$ is given by a \KNT model satisfying
\[
\mathcal{V}(E)=\left\{
\begin{array}
[c]{cl}
\left(  =0,0,=\frac{n+1}{2},\frac{n+1}{2},=n\right)   & \text{if }n\text{ is odd,}\\
\left(  =0,0,\frac{n+2}{2},=\frac{n}{2},n+1\right)   & \text{if }n\text{ is even.}
\end{array}
\right.
\]
By Tate's Algorithm, we have that $E$ has split multiplicative reduction if and only if $T^{2}+a_{1}T+a_{2}$ splits completely over $\kappa$. By Lemma~\ref{char2splitpoly}, this is equivalent to $a_{1}^{-2}a_{2}\in \operatorname{Im}T$. In this case, $c=n$. If $a_{1}^{-2}a_{2}\not \in \operatorname{Im}T$, then $E$ has non-split multiplicative reduction and $c=2-(n\ \operatorname{mod}2)$.

\qquad\textbf{Subcase 2b.} Suppose $\operatorname*{char}\kappa\neq2$. By
Tate's Algorithm, and the fact that the characteristic is odd, we may assume
that $\mathcal{V}(E)=\left(  \infty,0,\infty,1,1\right)  $. Since $v(b_{2}%
)=0$, we deduce that $v(a_{2})=0$. Next, set $v(a_{4})=k\geq1$. Since
$2a_{2}\in A^{\times}$, there exists $r\in A$ such that $2a_{2}r+a_{4}%
=\pi^{\frac{n+3}{2}}$. Note that $v(r)\geq\min\{k,\frac{n+3}{2}\}$. Now let
$E^{\prime}$ be the elliptic curve obtained from $E$ via the isomorphism
$\left[  1,r,0,0\right]  $. Then%
\[
E^{\prime}:y^{2}=x^{3}+\left(  a_{2}+3r\right)  x^{2}+\left(  \pi^{\frac
{n+3}{2}}+3r^{2}\right)  x+r^{3}+a_{4}r+a_{2}r^{2}+a_{6}.
\]
Hence, $\mathcal{V}(E^{\prime})=\left(  \infty,=0,\infty,\min\{2k,\frac
{n+3}{2}\},1\right)  $. If $v(a_{4}^{\prime})<\frac{n+3}{2}$, we can continue
as above until we obtain an elliptic curve $E^{\prime\prime}$ that is
isomorphic to $E$ such that $\mathcal{V}(E^{\prime\prime})=\left(
\infty,=0,\infty,\frac{n+3}{2},1\right)  $.

By the above, we may assume without loss of generality that $\mathcal{V}%
(E)=\left(  \infty,=0,\infty,\frac{n+3}{2},1\right)  $. Then $\Delta=a_{4}%
^{2}k_{1}+a_{6}k_{2}$ for some $k_{1},k_{2}\in A$ with $v(k_{2})=0$. It
follows that $v(\Delta)=n$ if and only if $v(a_{6})=n$. For the local Tamagawa
number, observe that $E$ has split multiplicative reduction if and only if the
polynomial $T^{2}-a_{2}$ splits in $\kappa$. Equivalently, $\left(
\frac{a_{2}}{\pi}\right)  =1$. In this case, $c=n$. If $E$ has non-split
multiplicative reduction, then $c=2-\left(  n\ \operatorname{mod}2\right)  $.

\textbf{Case 3.} Suppose $\operatorname*{typ}(E)=\mathrm{II}$.

\qquad\textbf{Subcase 3a.} Suppose $\operatorname*{char}\kappa=2$. By Tate's
Algorithm, we may assume that $\mathcal{V}(E)=\left(  0,0,1,1,=1\right)  $
with $v(b_{2})\geq1$. Since $v(b_{2})\geq1$, we must have $v(a_{1})\geq1$. If
$v(a_{2})\geq1$, then $E$ is given by a \KNT model. So, suppose that
$v(a_{2})=0$. By Lemma \ref{Lemmachar2squares}, there exists $s\in A$ such
that $s^{2}\equiv a_{2}\ \operatorname{mod}\pi$. Next, let $E^{\prime}$ be the
elliptic curve obtained from $E$ via the isomorphism $\left[  1,0,s,0\right]
$. Then%
\[
E^{\prime}:y^{2}+\left(  a_{1}+2s\right)  xy+a_{3}y=x^{3}+\left(  a_{2}%
-s^{2}+a_{1}s\right)  x^{2}+\left(  a_{4}-a_{3}s\right)  x+a_{6}.
\]
Since $a_{2}-s^{2}+a_{1}s\equiv0\ \operatorname{mod}\pi$, we have that
$\mathcal{V}(E^{\prime})=\left(  1,1,1,1,=1\right)  $.

\qquad\textbf{Subcase 3b.} Suppose $\operatorname*{char}\kappa\neq2$. By
Tate's Algorithm, and the fact that the characteristic is odd, we may assume
that $\mathcal{V}(E)=\left(  \infty,0,\infty,1,=1\right)  $ with $v(b_{2}%
)\geq1$. Since $v(b_{2})\geq1$, it follows that $v(a_{2})\geq1$. Thus, $E$ is
given by a \KNT model.

\textbf{Case 4.} Suppose $\operatorname*{typ}(E)=\mathrm{III}$. By Tate's Algorithm,
and Case $3$, we may assume that $\mathcal{V}(E)=\left(  1,1,1,1,2\right)  $
(resp. $\left(  \infty,1,\infty,1,2\right)  $) with $v(b_{8})=2$ if
$\operatorname*{char}\kappa=2$ (resp. $\neq2$). Now observe that $v(b_{8})=2$
if and only if $v(a_{4})=1$. Thus, $E$ is given by a \KNT model.

\textbf{Case 5.} Suppose $\operatorname*{typ}(E)=\mathrm{IV}$. By Tate's Algorithm and
Case $4$, we may assume that if $\operatorname*{char}\kappa=2$ (resp. $\neq
2$), then $\mathcal{V}(E)=\left(  1,1,1,2,2\right)  $ (resp. $\left(
\infty,1,\infty,2,2\right)  $) with $v(b_{6})=v(a_{3}^{2}+4a_{6})=2$. In
particular, $v(b_{6})=2$ if and only if $v(a_{3})=1$ (resp. $v(a_{6})=2$).
This shows that $E$ is given by a \KNT model. For the local Tamagawa number,
we observe that $c\in\left\{  1,3\right\}  $. Moreover, $c=3$ if and only if
$T^{2}+a_{3,1}T-a_{6,2}$ splits completely in $\kappa$.

\textbf{Case 6.} Suppose $\operatorname*{typ}(E)=\mathrm{I}_{0}^{\ast}$. From Tate's Algorithm, we may assume that $\mathcal{V}(E)=\left(1,1,2,2,3\right)  $ (resp. $\mathcal{V}(E)=\left(  \infty,1,\infty,2,3\right)$) if $\operatorname*{char}\kappa=2$ (resp. $\neq2$), and that the following polynomial has distinct roots in $\overline{\kappa}$:%
\[
P(X)=X^{3}+a_{2,1}X^{2}+a_{4,2}X+a_{6,3}.
\]
In particular, the discriminant of $P(X)$ satisfies%
\[
\operatorname*{Disc}(P)=\pi^{-6}\left(  a_{2}^{2}a_{4}^{2}-4a_{2}^{3}%
a_{6}-4a_{4}^{3}-27a_{6}^{2}+18a_{2}a_{4}a_{6}\right)  \not \equiv
0\ \operatorname{mod}\pi.
\]
Moreover, $c=1+\#\left\{  \alpha\in\kappa\mid P(\alpha)=0\right\}  $. Note that when $\operatorname*{char}\kappa\neq2$, $v(\Delta)=6$ since
$\Delta=16\operatorname*{Disc}(P)$. So it remains to consider the case when $\operatorname*{char}\kappa=2$.

Suppose $\operatorname*{char}\kappa=2$. Then $\operatorname*{Disc}(P)=\pi
^{-6}\left(  a_{2}^{2}a_{4}^{2}-27a_{6}^{2}\right)  \not \equiv
0\ \operatorname{mod}\pi$. Thus, if $v(a_{4})\geq3$, then $v(a_{6})=3$, which shows that $E$ is given by a \KNT model. So, suppose that $v(a_{4})=2$. By Lemma~\ref{Lemmachar2squares}, there exists a $r\in A$ such that $r^{2}\equiv\frac{a_{4}}{\pi^{2}}\ \operatorname{mod}\pi$. Next, let $E^{\prime}$ be the elliptic curve obtained from $E$ via the isomorphism $\left[  1,\pi
r,0,0\right]  $. Then%
\begin{align*}
E^{\prime}  &  :y^{2}+a_{1}xy+\pi^{2}\left(  a_{1,1}r+a_{3,2}\right)
=x^{3}+\pi\left(  3r+a_{2,1}\right)  x^{2}\\
&  +\pi^{2}\left(  2a_{2,1}r+3r^{2}+a_{4,2}\right)  x+\pi^{3}\left(
r^{3}+a_{2,1}r^{2}+a_{4,2}r+a_{6,3}\right)  .
\end{align*}
In particular, $\mathcal{V}(E^{\prime})=\left(  1,1,2,3,3\right)  $ since
$2a_{2,1}r+3r^{2}+a_{4,2}\equiv4a_{4,2}\ \operatorname{mod}\pi
=0\ \operatorname{mod}\pi$. Since $\operatorname*{typ}(E^{\prime}%
)=\mathrm{{I}_{0}^{\ast}}$, it follows that $v(a_{6}^{\prime})=3$.

\textbf{Case 7.} Suppose $\operatorname*{typ}(E)=\mathrm{I}_{n}^{\ast}$ for
$n>0$. Let
\begin{align*}
f(t)  &  =\left\{
\begin{array}
[c]{cl}%
t^{2}+a_{3,\frac{n+3}{2}}t-a_{6,n+3} & \text{if }n\text{ is odd,}\\
a_{2,1}t^{2}+a_{4,\frac{n+4}{2}}t+a_{6,n+3} & \text{if }n\text{ is even,}%
\end{array}
\right. \\
g(t)  &  =\left\{
\begin{array}
[c]{cl}%
a_{2,1}t^{2}+a_{4,\frac{n+3}{2}}t+a_{6,n+2} & \text{if }n\text{ is odd,}\\
t^{2}+a_{3,\frac{n+2}{2}}t-a_{6,n+2} & \text{if }n\text{ is even.}%
\end{array}
\right.
\end{align*}
By Tate's Algorithm, we may assume that $E$ is given by a model such that
$f(t)$ has distinct roots in $\kappa$ and $g(t)\equiv at^{2}%
\ \operatorname{mod}\pi$ where $a=a_{2,1}$ (resp. $1$) if $n$ is odd (resp.
even). Equivalently, we have that%
\[
\mathcal{V}(E)=\left\{
\begin{array}
[c]{cl}%
\left(  1,=1,\frac{n+3}{2},\frac{n+5}{2},n+3\right)  & \text{if }n\text{ is
odd,}\\
\left(  1,=1,\frac{n+4}{2},\frac{n+4}{2},n+3\right)  & \text{if }n\text{ is
even.}%
\end{array}
\right.
\]
We now proceed by cases.

\qquad\textbf{Subcase 7a}. Suppose $\operatorname*{char}\kappa=2$. Suppose
further that $v(a_{3})>\frac{n+3}{2}$ (resp. $v(a_{4})>\frac{n+4}{2}$) for $n$
odd (resp. even). Then%
\[
f(t)\equiv at^{2}+a_{6,n+3}\ \operatorname{mod}\pi,
\]
where $a=1$ (resp. $a_{2,1}$). By assumption, $f(t)$ has distinct roots in
$\overline{\kappa}$. Thus, $v(a_{6})=n+3$. By Lemma \ref{Lemmachar2squares},
there are $k,l\in A^{\times}$ such that $k^{2}\equiv a\ \operatorname{mod}\pi$
and $l^{2}\equiv a_{6,n+3}\ \operatorname{mod}\pi$. Consequently,%
\[
f(t)\equiv\left(  kt+l\right)  ^{2}\ \operatorname{mod}\pi.
\]
But this is a contradiction, as then $f(t)$ has a double root in $\kappa$.
Therefore, it must be the case that $v(a_{3})=\frac{n+3}{2}$ (resp.
$v(a_{4})=\frac{n+4}{2}$). For the local Tamagawa number, observe that
$c\in\left\{  2,4\right\}  $. Moreover, $c=4$ if and only if $f(t)$ splits
completely in $\kappa$. The result now follows from Lemma \ref{char2splitpoly}.

\qquad\textbf{Subcase 7b.} Suppose $\operatorname*{char}\kappa\neq2$. Let
$E^{\prime}$ be the elliptic curve obtained from $E$ via the isomorphism
$\left[  1,0,\frac{-a_{1}}{2},\frac{-a_{3}}{2}\right]  $. Then%
\[
E^{\prime}:y^{2}=x^{3}+\frac{1}{4}\left(  a_{1}^{2}+4a_{2}\right)  x^{2}%
+\frac{1}{2}\left(  a_{1}a_{3}+2a_{4}\right)  x+\frac{1}{4}\left(  a_{3}%
^{2}+4a_{6}\right)  .
\]
Thus,%
\begin{equation}
\mathcal{V}(E^{\prime})=\left\{
\begin{array}
[c]{cl}%
\left(  \infty,=1,\infty,\frac{n+5}{2},n+3\right)   & \text{if }n\text{ is
odd,}\\
\left(  \infty,=1,\infty,\frac{n+4}{2},n+3\right)   & \text{if }n\text{ is
even.}%
\end{array}
\right.  \label{GenLemIn*vals}%
\end{equation}
So we may suppose that $\mathcal{V}(E)$ satisfies (\ref{GenLemIn*vals}). If
$n$ is odd, then $f(t)\equiv t^{2}-a_{6,n+3}\ \operatorname{mod}\pi$. From
this, we conclude that $v(a_{6})=n+3$ since $f(t)$ has distinct roots in
$\overline{\kappa}$. Moreover, the Tamagawa number $c$ depends on whether
$a_{6,n+3}$ is a square modulo $\pi$.

It remains to consider the case when $n$ is even. If $v(a_{4})>\frac{n+4}{2}$,
then $f(t)$ has distinct roots if and only if $v(a_{6})=n+3$. This shows that
$E$ is given by a \KNT model. So, suppose instead that $v(a_{4})=\frac
{n+4}{2}$, and set%
\[
r=\frac{\pi^{\frac{n+6}{2}}-a_{4}}{2a_{2}}.
\]
Observe that $v(r)=\frac{n+2}{2}$. Next, let $E^{\prime}$ be the elliptic
curve obtained from $E$ via the isomorphism $\left[  1,r,0,0\right]  $. Then%
\[
E^{\prime}:y^{2}=x^{3}+\left(  a_{2}+3r\right)  x^{2}+\left(  \pi^{\frac
{n+6}{2}}+3r^{2}\right)  x+a_{2}r^{2}+r^{3}+a_{4}r+a_{6}.
\]
Thus, $\mathcal{V}(E^{\prime})=\left(  \infty,=1,\infty,\frac{n+6}%
{2},n+3\right)  $. So we may assume that $\mathcal{V}(E)$ satisfies these
valuations. Now observe that $f(t)\equiv a_{2,1}t^{2}+a_{6,n+3}%
\ \operatorname{mod}\pi$. Since $f(t)$ has distinct roots in $\overline
{\kappa}$, we conclude that $v(a_{6})=n+3$.\ Lastly, the Tamagawa number
depends on whether $\frac{-a_{6,n+2}}{a_{2}}$ is a square modulo $\pi$.

\textbf{Case 8.} Suppose $\operatorname*{typ}(E)=\mathrm{IV}^{\ast}$. By Tate's
Algorithm, we may assume that if $\operatorname*{char}\kappa=2$ (resp. $\neq
2$), then $\mathcal{V}(E)=\left(  1,2,2,3,4\right)  $ (resp. $\left(
\infty,2,\infty,3,4\right)  $) with $v(b_{6})=4$. It follows that $E$ is given
by a \KNT model since $v(b_{6})=4$ is equivalent to $v(a_{3})=2$ (resp.
$v(a_{6})=4$). For the Tamagawa number, we note that $c\in\left\{
1,3\right\}  $. Moreover, $c=3$ if and only if $Y^{2}+a_{3,2}Y-a_{6,4}$ splits
completely in $\kappa$. If $\operatorname*{char}\kappa=2$, then the polynomial splits completely if and only if $\frac{a_{6,4}}{a_{3,2}^{2}}=\frac{a_{6}}{a_{3}^{2}}\in\operatorname{Im}T$ by Lemma~\ref{char2splitpoly}. If
$\operatorname*{char}\kappa\neq3$, then the polynomial splits if and only if
$a_{6,4}$ is a square modulo $\pi$.

\textbf{Case 9.} Suppose $\operatorname*{typ}(E)=\mathrm{III}^{\ast}$ (resp. $\mathrm{II}^{\ast
}$). It is easily verified via Tate's Algorithm that $E$ admits a \KNT model.
\end{proof}

\begin{corollary}\label{TamagawaQ2}
Suppose an elliptic curve over $\Q_2$ is given by a \KNT model $E$ such that $\operatorname*{typ}(E)\in\left\{  \rm{I}_{n>0},\rm{IV},\rm{I}_{n\geq0}^{\ast},\rm{IV}^{\ast}\right\}  $. Then, the local Tamagawa number $c$ of $E$ can be determined from the conditions on the Weierstrass coefficients of $E$ as given in Table
\ref{TamaQ2}.
\end{corollary}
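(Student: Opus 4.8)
The plan is to derive the corollary directly from Proposition~\ref{lem:pnormal} by specializing its residue field to $\kappa=\mathbb{F}_2$ and translating each Artin--Schreier membership condition appearing in Table~\ref{pnormalmodels} into an explicit valuation or congruence condition on the Weierstrass coefficients of $E$. First I would record the one arithmetic fact that drives everything: over $\kappa=\mathbb{F}_2$ the Artin--Schreier endomorphism $T(\alpha)=\alpha^{2}+\alpha$ satisfies $T(0)=T(1)=0$, so $\operatorname{Im}T=\{0\}$. Consequently, whenever $x$ lies in $A$ and its image in $\kappa$ is well defined, the condition $x\in\operatorname{Im}T$ is simply equivalent to $x\equiv 0 \pmod{\pi}$, that is, $v(x)\geq 1$. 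In every row of Table~\ref{pnormalmodels} relevant here the tested quantity is a unit times a fixed power of $\pi$, so this observation converts each ``$\in\operatorname{Im}T$ versus $\notin\operatorname{Im}T$'' dichotomy into a numerical condition on valuations, which is exactly the content of Table~\ref{TamaQ2}.

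Next I would treat the four families in turn. For $\operatorname*{typ}(E)=\mathrm{I}_{n>0}$, Proposition~\ref{lem:pnormal} gives $c=n$ exactly when $a_{1}^{-2}a_{2}\in\operatorname{Im}T$; since $a_{1}\in A^{\times}$, over $\mathbb{F}_2$ this becomes $v(a_{2})\geq 1$, while $v(a_{2})=0$ yields the non-split value $c=2-(n\bmod 2)$. For $\operatorname*{typ}(E)\in\{\mathrm{IV},\mathrm{IV}^{\ast}\}$ and for $\mathrm{I}_{n}^{\ast}$ with $n$ odd, the test is on $a_{3}^{-2}a_{6}$; using the valuations prescribed by the corresponding $v$-\KNT model one finds that $a_{3}^{-2}a_{6}\in\operatorname{Im}T$ holds precisely when $v(a_{6})>2v(a_{3})$, so the larger Tamagawa number ($3$, resp.\ $3$, resp.\ $4$) occurs exactly in that case and the smaller one ($1$, resp.\ $1$, resp.\ $2$) otherwise. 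For $\mathrm{I}_{n}^{\ast}$ with $n$ even the test on $\pi a_{4}^{-2}a_{6}$ reduces in the same way to the comparison of $v(a_{6})$ with $2v(a_{4})$. In each instance the two admissible values of $c$ are read straight off Table~\ref{pnormalmodels}, now indexed by a single inequality between valuations.

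The only case requiring a genuinely separate (though still elementary) count is $\mathrm{I}_{0}^{\ast}$, included via $\mathrm{I}_{n\geq 0}^{\ast}$, where $c=1+\#\{\alpha\in\kappa\mid P(\alpha)=0\}$ for $P(X)=X^{3}+a_{2,1}X^{2}+a_{4,2}X+a_{6,3}$. Here I would enumerate the $\mathbb{F}_2$-rational roots directly: the model has $\mathcal{V}(E)=(1,1,2,3,=3)$, so $a_{6,3}\in A^{\times}$ and hence $P(0)=a_{6,3}\equiv 1\not\equiv 0$, meaning $\alpha=0$ is never a root, while $P(1)\equiv a_{2,1}+a_{4,2}\pmod{\pi}$, so $\alpha=1$ is a root exactly when $a_{2,1}\equiv a_{4,2}\pmod{\pi}$. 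Thus $c\in\{1,2\}$, governed by that single congruence. I expect no conceptual difficulty anywhere; the main obstacle is purely organizational, namely keeping the normalization of each $v$-\KNT model straight so that the valuation conditions recorded in Table~\ref{TamaQ2} match the coefficients to which Proposition~\ref{lem:pnormal} applies. No idea beyond $\operatorname{Im}T=\{0\}$ over $\mathbb{F}_2$ and the root count for $\mathrm{I}_{0}^{\ast}$ is needed.
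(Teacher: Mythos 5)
Your proposal is correct and follows essentially the same route as the paper's proof: observe that $\operatorname{Im}T=\{0\}$ over $\mathbb{F}_2$ so each Artin--Schreier membership test in Table~\ref{pnormalmodels} becomes a valuation inequality, and handle $\mathrm{I}_0^\ast$ separately by counting roots of $P(X)$ over $\mathbb{F}_2$ (your condition $a_{2,1}\equiv a_{4,2}$ reduces to the table's $v(a_2)\geq 2$ since $v(a_4)\geq 3$ forces $a_{4,2}\equiv 0$). No substantive differences.
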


{\begingroup \small
\renewcommand{\arraystretch}{1.5}
 \begin{longtable}{lccc}
 	\caption{Local Tamagawa numbers for $E/\Q_2$, with $E$ given by a \KNT model}\\
	\hline
$\operatorname*{typ}(E)$ & $\mathcal{V}(E)$ &
Conditions & $c$\\
	\hline
	\endfirsthead
	\hline
$\operatorname*{typ}(E)$ & $\mathcal{V}(E)$ &
Conditions & $c$\\
	\hline
	\endhead
	\hline 
	\multicolumn{3}{r}{\emph{continued on next page}}
	\endfoot
	\hline 
	\endlastfoot
%&  &  &\\
$\rm{I}_{n>0\text{ odd}}$ & $\left(  =0,0,=\frac{n+1}{2},\frac{n+1}{2},=n\right)  $ & $v(a_2) =0$ & $1$\\\cmidrule{3-4}
 &  & $v(a_2) \ge 1$ & $n$\\\hline

$\rm{I}_{n>0\text{ even}}$ & $\left(  =0,0,\frac{n+2}{2},=\frac{n}{2},n+1\right)  $ & $v(a_2) =0$ & $2$\\\cmidrule{3-4}
 &  & $v(a_2) \ge 1$ & $n$\\\hline

$\rm{IV}$  & $\left(  1,1,=1,2,2\right)  $ & $v(a_6) = 2$ & $1$\\\cmidrule{3-4}
 &  & $v(a_6) \ge 3$ & $3$ \\\hline

%%%%%%%%
$\rm{I}_{0}^{\ast}$  & $\left(  1,1,2,3,=3\right)  $ & $v(a_2)=1 $ & $1$\\\cmidrule{3-4}

 &   & $v(a_2) \ge 2 $ & $2$\\\hline

%%%%%%%%
$\rm{I}_{n>0\text{ odd}}^{\ast}$ & $\left(  1,=1,=\frac{n+3}{2},\frac{n+5}%
{2},n+3\right)  $ & $v(a_6) = n+3$  & $2$ \\\cmidrule{3-4}
&   & $v(a_6) \ge n+4$ & $4$ \\\hline

%%%%%%%%%%
$\rm{I}_{n>0\text{ even}}^{\ast}$ & $\left(  1,=1,\frac{n+4}{2},=\frac
{n+4}{2},n+3\right)  $ & $v(a_6) = n+3$ & $2$ \\\cmidrule{3-4}
&   & $v(a_6) \ge n+4$ & $4$ \\\hline

%%%IV*
$\rm{IV}^{\ast}$ & $\left(  1,2,=2,3,4\right)  $ & $v(a_6) = 4$ & $1$\\\cmidrule{3-4}
&   & $v(a_6) \ge 5$ & $3$
\label{TamaQ2}
\end{longtable}\endgroup}

\begin{proof}
Suppose $\operatorname*{typ}(E)\in\left\{  \rm{I}_{n>0},\rm{IV},\rm{I}_{n>0}^{\ast},\rm{IV}^{\ast}\right\}  $. Since $E$ is given by a \KNT model, we have that the local Tamagawa number is determined by whether the given ratio in Table \ref{pnormalmodels} is in $\operatorname{Im}T$. Since $K=\mathbb{Q}_{2}$, we have that $\kappa=\mathbb{F}_{2}$. Consequently, $\operatorname{Im}T=\left\{  0\right\}  $. The result now follows, since if the ratio is in $\operatorname{Im}T$, then the ratio is $0\ \operatorname{mod}2$. In particular, the valuation of the numerator must be greater than the valuation of the denominator. It remains to consider the case when $\operatorname*{typ}(E)=\rm{I}_{0}^{\ast}$. In this case, $\mathcal{V}(E)=\left(1,1,2,3,=3\right)  $ and $c=1+\#\left\{  \alpha\in\mathbb{F}_{2}\mid P(\alpha)=0\right\}  $, where
\begin{align*}
P(X)  & =X^{3}+a_{2,1}X^{2}+a_{4,2}X+a_{6,3}\\
& \equiv X^{3}+a_{2,1}X^{2}+1\ \operatorname{mod}2.
\end{align*}
Consequently,
\[
c=\left\{
\begin{array}
[c]{cl}%
1 & \text{if }v(a_{2})=1,\\
2 & \text{if }v(a_{2})\geq2. 
\end{array}
\right. \qedhere
\] 
\end{proof}

\begin{lemma}\label{Lemmadisccond}
Suppose an elliptic curve over $\Q_2$ is given by a \KNT model $E$. Then, there are explicit conditions on the Weierstrass coefficients of $E$ to determine the minimal discriminant valuation $\delta$ and conductor exponent $f$, as given in Table
\ref{tab:discriminantconductor}.
\end{lemma}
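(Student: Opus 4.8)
The plan is to reduce everything to Ogg's formula together with a computation of the discriminant valuation. Since $E$ is given by a $v$-\KNT model, Proposition~\ref{lem:pnormal} guarantees that $E$ is already a minimal Weierstrass model, so the minimal discriminant valuation is simply $\delta=v(\Delta)$ for the given model, and the same proposition pins down $\operatorname{typ}(E)$ from $\mathcal{V}(E)$. The Kodaira--N\'eron type in turn determines the number $m$ of components (without multiplicity) of the special fiber through the standard dictionary: $m=n$ for $\mathrm{I}_n$, $m=n+5$ for $\mathrm{I}_n^{*}$, and $m=1,2,3,7,8,9$ for $\mathrm{II},\mathrm{III},\mathrm{IV},\mathrm{IV}^{*},\mathrm{III}^{*},\mathrm{II}^{*}$. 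Ogg's formula $f=v(\Delta)-m+1$ then yields $f$ as soon as $\delta$ is known, so the whole statement reduces to computing $v(\Delta)$ for each type and recording the result as $\delta$, with $f$ following formally.

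I would proceed case-by-case along the rows of Table~\ref{pnormalmodels}, mirroring the organization of the proof of Proposition~\ref{lem:pnormal}. For the multiplicative types little is needed beyond bookkeeping: $\mathrm{I}_{0}$ gives $\delta=0$, $m=1$, $f=0$, while $\mathrm{I}_{n}$ gives $\delta=n$, $m=n$, $f=1$, where $v(\Delta)=n$ is exactly the defining condition of the \KNT model established in Lemma~\ref{Lem:Inmodel}. For each additive type I would substitute the valuation constraints recorded in $\mathcal{V}(E)$ into the expansion
\[
\Delta=9b_{2}b_{4}b_{6}-b_{2}^{2}b_{8}-8b_{4}^{3}-27b_{6}^{2}
\]
and read off $v(\Delta)$. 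The equality conditions already built into $\mathcal{V}(E)$ frequently determine $v(\Delta)$ outright; when they do not, the residual ambiguity is precisely the further condition recorded in Table~\ref{tab:discriminantconductor}, separating the possible values of $\delta$ and hence the possible values of $f$.

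The main obstacle is the genuine characteristic-$2$ phenomenon that the dominant terms of $\Delta$ cancel modulo powers of $2$, so crude valuation bounds are not sharp and $\delta$ is not a function of $\operatorname{typ}(E)$ alone. Over $\Q_2$ the additive types carry wild ramification, so within a single Kodaira symbol the quantity $f$ (equivalently $\delta=f+m-1$) can take several values. For each such row the work is to isolate the lowest-order surviving monomial of $\Delta$ after imposing $\mathcal{V}(E)$ and to phrase the condition for its nonvanishing modulo the appropriate power of $2$ as an explicit inequality on the valuations $v(a_{i})$; these are exactly the conditions appearing in Table~\ref{tab:discriminantconductor}. I expect this sharpened valuation analysis, carried out uniformly across the additive types and most conveniently verified symbolically (as in the companion computations \cite{gittwists}), to be the technical heart of the argument, while the final passage from $\delta$ to $f$ via Ogg's formula is purely mechanical.
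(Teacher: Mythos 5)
Your proposal is correct and matches the paper's approach: the paper's proof simply defers to a case-by-case symbolic verification in the companion notebook (and to an equivalent computation in Cremona--Sadek), which is precisely the computation you describe — using minimality of the \KNT model to get $\delta=v(\Delta)$, tracking the cancellations in $\Delta$ modulo powers of $2$ to isolate the extra conditions on the $a_i$, and then deducing $f$ from Ogg's formula with the standard component counts. Your dictionary of $m$-values and the resulting relations between $\delta$ and $f$ check out against Table~\ref{tab:discriminantconductor}.
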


\begin{proof}
A case-by-case analysis of this proof can be found in \cite[Lemma3\_7.ipynb]{gittwists}. We note that the proof of \cite[Theorem~5.3]{CromonaSadek2020} contains an equivalent computation of this fact.
\end{proof}

{\begingroup \small
\renewcommand{\arraystretch}{1.5}
 \begin{longtable}{ccccccc}
 	\caption{Minimal discriminant valuation and conductor exponent of an elliptic curve $E/\Q_2$ given by a \KNT model}\\
	\hline
$\operatorname*{typ}(E)$ & \multicolumn{4}{c}{Conditions} & $\delta$ & $f$\\
	\hline
	\endfirsthead
	\hline
$\operatorname*{typ}(E)$ & \multicolumn{4}{c}{Conditions} & $\delta$ & $f$\\
	\hline
	\endhead
	\hline 
	\multicolumn{3}{r}{\emph{continued on next page}}
	\endfoot
	\hline 
	\endlastfoot

$\rm{I}_{0}$ &  &  &  &  & $0$ & $0$\\\hline

$\rm{I}_{n>0}$ &  &  &  &  & $n$ & $1$\\\hline

$\rm{II}$ & $v(a_{3})=1$ &  &  &  & $4$ & $4$\\\cmidrule{2-7}
& $v(a_{3})\geq2$ & $v(a_{1})\geq2$ &  &  & $6$ & $6$\\\cmidrule{3-7}
&  & $v(a_{1})=1$ & $v(a_{4})=1$ &  & $7$ & $7$\\\cmidrule{4-7}
&  &  & $v(a_{4})\geq2$ &  & $6$ & $6$\\\hline

$\rm{III}$ & $v(a_{3})=1$ &  &  &  & $4$ & $3$\\\cmidrule{2-7}
& $v(a_{3})\geq2$ & $v(a_{1})=1$ &  &  & $6$ & $5$\\\cmidrule{3-7}
&  & $v(a_{1})\geq2$ & $v(a_{2})=1$ & $v(a_{3}^{2}+4a_{6})=4$ & $9$ & $8$\\\cmidrule{5-7}
&  &  &  & $v(a_{3}^{2}+4a_{6})\geq5$ & $8$ & $7$\\\cmidrule{4-7}
&  &  & $v(a_{2})\geq2$ & $v(a_{3}^{2}+4a_{6})=4$ & $8$ & $7$\\\cmidrule{5-7}
&  &  &  & $v(a_{3}^{2}+4a_{6})\geq5$ & $9$ & $8$\\\hline

$\rm{IV}$ &  &  &  &  & $4$ & $2$\\\hline

$\rm{I}_{0}^{\ast}$ & $v(a_{3})=2$ &  &  &  & $8$ & $4$\\\cmidrule{2-7}
& $v(a_{3})\geq3$ & $v(a_{1})=1$ &  &  & $9$ & $5$\\\cmidrule{3-7}
&  & $v(a_{1})\geq2$ &  &  & $10$ & $6$\\\hline

$\rm{I}_{1}^{\ast}$ &  &  &  &  & $8$ & $3$\\\hline

$\rm{I}_{2}^{\ast}$ & $v(a_{1})=1$ &  &  &  & $10$ & $4$\\\cmidrule{2-7}
& $v(a_{1})\geq2$ & $v(a_{3})=3$ &  &  & $13$ & $7$\\\cmidrule{3-7}
&  & $v(a_{3})\geq4$ &  &  & $12$ & $6$\\\hline

$\rm{I}_{3}^{\ast}$ & $v(a_{1})=1$ &  &  &  & $11$ & $4$\\\cmidrule{2-7}
& $v(a_{1})\geq2$ &  &  &  & $12$ & $5$\\\hline

$\rm{I}_{n\geq4}^{\ast}$ & $v(a_{1})=1$ &  &  &  & $8+n$ & $4$\\\cmidrule{2-7}
& $v(a_{1})\geq2$ &  &  &  & $10+n$ & $6$\\\hline

$\rm{IV}^{\ast}$ &  &  &  &  & $8$ & $2$\\\hline

$\rm{III}^{\ast}$ & $v(a_{1})=1$ &  &  &  & $10$ & $3$\\\cmidrule{2-7}
& $v(a_{1})\geq2$ & $v(a_{3})=3$ &  &  & $12$ & $5$\\\cmidrule{3-7}
&  & $v(a_{3})\geq4$ & $v(a_{6})=5$ & $v(a_{1}^{2}+4a_{2})=4$ & $15$ & $8$\\\cmidrule{5-7}
&  &  &  & $v(a_{1}^{2}+4a_{2})\geq5$ & $14$ & $7$\\\cmidrule{4-7}
&  &  & $v(a_{6})\geq6$ & $v(a_{1}^{2}+4a_{2})=4$ & $14$ & $7$\\\cmidrule{5-7}
&  &  &  & $v(a_{1}^{2}+4a_{2})\geq5$ & $15$ & $8$\\\hline

$\rm{II}^{\ast}$ & $v(a_{1})=1$ &  &  &  & $11$ & $3$\\\cmidrule{2-7}
& $v(a_{1})\geq2$ & $v(a_{3})=3$ &  &  & $12$ & $4$\\\cmidrule{3-7}
&  & $v(a_{3})\geq4$ &  &  & $14$ & $6$

\label{tab:discriminantconductor}
\end{longtable}\endgroup}

%%%%%%%%%%%%%%%%%%%%%%%%%%%%%%%%%
%%%%%% odd primes case %%%%%%%%%%
%%%%%%%%%%%%%%%%%%%%%%%%%%%%%%%%%%
\section{Local data of quadratic twists when \texorpdfstring{$\operatorname{char}\kappa \neq 2$}{chark =/=2}} \label{Sec:oddp}

\begin{theorem}\label{tamanot2}
Let $K$ be the field of fractions of a discrete valuation ring with uniformizer $\pi$ and perfect residue field $\kappa$ of odd characteristic. Let $v$ denote the normalized valuation on $K$, and suppose that an elliptic curve over $K$ is given by a \KNT model $E$. For $d\in K^{\times}/(K^{\times})^{2}$, Table~\ref{tab:twistspodd} gives explicit conditions on the Weierstrass coefficients of $E$ to determine the Kodaira-N\'{e}ron type of the quadratic twist $E^{d}$ of $E$ by $d$. Moreover,
conditions are listed to determine the local Tamagawa number $c^{d}$
of $E^{d}$. When applicable, Table~\ref{tab:twistspodd}
references the local Tamagawa number $c$, determined by the conditions listed in Table~\ref{pnormalmodels}. The table also references the polynomial $P_d(X)=X^{3}+a_{2,1}dX^{2}+a_{4,2}d^{2}X+d^{3}a_{6,3}.$
\end{theorem}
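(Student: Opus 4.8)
The plan is to reduce the theorem to producing, for every Kodaira--N\'{e}ron type of $E$ and every relevant class of $d$, a $v$-\KNT model of $E^{d}$. Once such a model is in hand, Proposition~\ref{lem:pnormal} reads off both $\operatorname{typ}(E^{d})$ and $c^{d}$ directly from Table~\ref{pnormalmodels}, so no fresh run of Tate's Algorithm is required. First I would normalize: since $\operatorname{char}\kappa$ is odd and $E$ is a $v$-\KNT model, I may take $a_{1}=a_{3}=0$, so that $E:y^{2}=x^{3}+a_{2}x^{2}+a_{4}x+a_{6}$ and
\[
E^{d}:y^{2}=x^{3}+a_{2}dx^{2}+a_{4}d^{2}x+a_{6}d^{3},\qquad \Delta_{E^{d}}=d^{6}\Delta_{E}.
\]
Because $E^{d}$ depends only on the class of $d$ in $K^{\times}/(K^{\times})^{2}$ and the residue characteristic is odd, it suffices to treat $v(d)=0$ and $v(d)=1$, recording in each class the quadratic-residue symbol $\left(\frac{d}{\pi}\right)$.

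The case $v(d)=0$ is immediate: every coefficient valuation is preserved, as is $v(\Delta_{E^{d}})=v(\Delta_{E})$ together with each equality condition, so the displayed model of $E^{d}$ is a minimal model with exactly the same $\mathcal{V}$-data as $E$; hence $\operatorname{typ}(E^{d})=\operatorname{typ}(E)$. The local Tamagawa number can nonetheless differ, because each splitting test in Table~\ref{pnormalmodels} now involves the relevant coefficient multiplied by a power of $d$, so its residue symbol is scaled by the corresponding power of $\left(\frac{d}{\pi}\right)$; I would record this type by type. (In type $\mathrm{I}_{0}^{\ast}$, for instance, the count of $\kappa$-rational roots is unaffected, since $P_{d}(X)=d^{3}P(X/d)$ only rescales the roots of $P$ by the unit $d$.)

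The substantive case is $v(d)=1$, where $v(a_{2}d)=v(a_{2})+1$, $v(a_{4}d^{2})=v(a_{4})+2$, $v(a_{6}d^{3})=v(a_{6})+3$, and $v(\Delta_{E^{d}})=v(\Delta_{E})+6$, so the twist changes type according to the standard involution $\mathrm{I}_{0}\leftrightarrow\mathrm{I}_{0}^{\ast}$, $\mathrm{I}_{n}\leftrightarrow\mathrm{I}_{n}^{\ast}$, $\mathrm{II}\leftrightarrow\mathrm{IV}^{\ast}$, $\mathrm{III}\leftrightarrow\mathrm{III}^{\ast}$, $\mathrm{IV}\leftrightarrow\mathrm{II}^{\ast}$. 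When $E$ has an unstarred type, one checks directly that the displayed twist already satisfies the $v$-\KNT valuations of the target starred type, so by the converse part of Proposition~\ref{lem:pnormal} it is a minimal model of that type; the auxiliary cubic for the $\mathrm{I}_{0}\to\mathrm{I}_{0}^{\ast}$ case is precisely $P_{d}(X)$, whose integrality is forced by the valuation shift. When $E$ has a starred type the displayed model is non-minimal, with discriminant valuation twelve above the minimal value, and I would first rescale by $u=\pi$; this already lands in $v$-\KNT form for $\mathrm{I}_{0}^{\ast},\mathrm{IV}^{\ast},\mathrm{III}^{\ast},\mathrm{II}^{\ast}$, whereas for the $\mathrm{I}_{n}^{\ast}\to\mathrm{I}_{n}$ rows I would then iterate the translations of Subcase~2b in the proof of Proposition~\ref{lem:pnormal} until the model meets the required valuations, before reading off $\operatorname{typ}(E^{d})$ and $c^{d}$.

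The main obstacle is precisely this bookkeeping in the $v(d)=1$ case, above all the $\mathrm{I}_{n}$ and $\mathrm{I}_{n}^{\ast}$ families with their odd/even subdivisions, where the rescaled coefficients do not immediately match any row of Table~\ref{pnormalmodels} and must be massaged by iterated admissible changes of variable while preserving the equality conditions and minimality. As in Proposition~\ref{lem:pnormal}, the Tamagawa-number conditions ultimately reduce to whether explicit quantities are squares modulo $\pi$; these residue-symbol computations, and the verification that each massaged model carries the asserted $\mathcal{V}$-data, I would delegate to the companion SageMath notebooks.
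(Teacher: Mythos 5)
Your proposal is correct and follows essentially the same route as the paper: fix the model $y^{2}=x^{3}+a_{2}dx^{2}+a_{4}d^{2}x+a_{6}d^{3}$, observe that for $v(d)=0$ it is already a $v$-\KNT model of the same type (so only the residue symbols in the splitting tests change, and $P_{d}(X)=d^{3}P(X/d)$ handles $\mathrm{I}_{0}^{\ast}$), and for $v(d)=1$ rescale the starred types by $u=\pi$ and invoke the Subcase~2b translations for $\mathrm{I}_{n}^{\ast}\to\mathrm{I}_{n}$ before reading off the local data from Proposition~\ref{lem:pnormal}. The only difference is one of presentation: the paper carries out the resulting residue-symbol computations explicitly case by case (e.g.\ reducing the split/non-split test to $T^{2}-a_{2,2}d$ in the $\mathrm{I}_{n}^{\ast}\to\mathrm{I}_{n}$ row), whereas you defer them to the companion notebooks.
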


{ \begingroup \scriptsize
\renewcommand{\arraystretch}{1.36}
 \begin{longtable}{cccccc}
 	\caption{N\'eron-Kodaira types and local Tamagawa numbers of $E^d$ when $\operatorname{char}\kappa \neq 2$}\\
 	\hline
 $\operatorname{typ}(E)$ & $v(d)$ & $\operatorname{typ}(E^d)$ & Conditions & $(c,c^{d})$\\
	\hline
	\endfirsthead
	\hline
 $R$ & $v(d)$ & $R^{d}$ & Conditions & $(c,c^{d})$\\
	\hline
	\endhead
	\hline 
	\multicolumn{5}{r}{\emph{continued on next page}}
	\endfoot
	\hline 
	\endlastfoot 
$\rm{I}_{0}$ & $0$ & $\rm{I}_{0}$ &  & $\left(  1,1\right)  $\\\cmidrule{2-5}
& $1$ & $\rm{I}_{0}^{\ast}$ &  & $\left(  1,1+\#\left\{  \alpha\in\kappa\mid
P_{d}(\alpha)=0\right\}  \right)  $\\\hline
$\rm{I}_{n>0}$ & $0$ & $\rm{I}_{n}$ & $\left(  \frac{a_{2}d}{\pi}\right)  =-1$ &
$\left(  c,2-(n\ \operatorname{mod}2)\right)  $\\\cmidrule{4-5}
&  &  & $\left(  \frac{a_{2}d}{\pi}\right)  =1$ & $\left(  c,n\right)  $\\\cmidrule{2-5}
& $1$ & $ \rm{I}_{n}^{\ast}$ & $\left(  \frac{da_{6,n+1}}{\pi}\right)  =-1$ if
$n\not \in 2\mathbb{Z}
$ or $\left(  \frac{-a_{2}^{-1}a_{6,n}}{\pi}\right)  =-1$ if $n\in2\mathbb{Z}$ & $\left(  c,2\right)  $\\\cmidrule{4-5}
&  &  & $\left(  \frac{da_{6,n+1}}{\pi}\right)  =1$ if $n\not \in 2\mathbb{Z}$ or $\left(  \frac{-a_{2}^{-1}a_{6,n}}{\pi}\right)  =1$ if $n\in2\mathbb{Z}$ & $\left(  c,4\right)  $\\\hline
$\rm{II}$ & $0$ & $\rm{II}$ &  & $\left(  1,1\right)  $\\\cmidrule{2-5}
& $1$ & $\rm{IV}^{\ast}$ & $\left(  \frac{da_{6,2}}{\pi}\right)  =-1$ &
$\left(  1,1\right)  $\\\cmidrule{4-5}
&  &  & $\left(  \frac{da_{6,2}}{\pi}\right)  =1$ & $\left(  1,3\right)
$\\\hline
$\rm{III}$ & $0$ & $\rm{III}$ &  & $\left(  2,2\right)  $\\\cmidrule{2-5}
& $1$ & $\rm{III}^{\ast}$ &  & $\left(  2,2\right)  $\\\hline
$\rm{IV}$ & $0$ & $\rm{IV}$ & $\left(  \frac{da_{6,2}}{\pi}\right)  =-1$ & $\left(
c,1\right)  $\\\cmidrule{4-5}
&  &  & $\left(  \frac{da_{6,2}}{\pi}\right)  =1$ & $\left(  c,3\right)  $\\\cmidrule{2-5}
& $1$ & $\rm{II}^{\ast}$ &  & $\left(  c,1\right)  $\\\hline
$\rm{I}_{0}^{\ast}$ & $0$ & $\rm{I}_{0}^{\ast}$ &  & $\left(  c,c  \right)  $\\\cmidrule{2-5}
& $1$ & $\rm{I}_{0}$ &  & $\left(  c,1\right)  $\\\hline
$\rm{I}_{n>0}^{\ast}$ & $0$ & $\rm{I}_{n>0}^{\ast}$ & $\left(  \frac{da_{6,n+3}}{\pi
}\right)  =-1$ if $n\not \in 2\mathbb{Z}$ or $\left(  \frac{-a_{2}^{-1}a_{6,n+2}}{\pi}\right)  =-1$ if $n\in2\mathbb{Z}$ & $\left(  c,2\right)  $\\\cmidrule{3-5}
&  &  & $\left(  \frac{da_{6,n+3}}{\pi}\right)  =1$ if $n\not \in 2\mathbb{Z}$ or $\left(  \frac{-a_{2}^{-1}a_{6,n+2}}{\pi}\right)  =1$ if $n\in2\mathbb{Z}$ & $\left(  c,4\right)  $\\\cmidrule{2-5}
& $1$ & $\rm{I}_{n}$ & $\left(  \frac{da_{2,2}}{\pi}\right)  =-1$ & $\left(
c,2-(n\ \operatorname{mod}2)\right)  $\\\cmidrule{4-5}
&  &  & $\left(  \frac{da_{2,2}}{\pi}\right)  =1$ & $\left(  c,n\right)  $\\\hline
$\rm{IV}^{\ast}$ & $0$ & $\rm{IV}^{\ast}$ & $\left(  \frac{da_{6,4}}{\pi}\right)  =-1$ &
$\left(  c,1\right)  $\\\cmidrule{4-5}
&  &  & $\left(  \frac{da_{6,4}}{\pi}\right)  =1$ & $\left(  c,3\right)  $\\\cmidrule{2-5}
& $1$ & $\rm{II}$ &  & $\left(  c,1\right)  $\\\hline
$\rm{III}^{\ast}$ & $0$ & $\rm{III}^{\ast}$ &  & $\left(  2,2\right)  $\\\cmidrule{2-5}
& $1$ & $\rm{III}$ &  & $\left(  2,2\right)  $\\\hline
$\rm{II}^{\ast}$ & $0$ & $\rm{II}^{\ast}$ &  & $\left(  1,1\right)  $\\\cmidrule{2-5}
& $1$ & $\rm{IV}$ & $\left(  \frac{da_{6,6}}{\pi}\right)  =-1$ & $\left(
1,1\right)  $\\\cmidrule{3-5}
&  &  & $\left(  \frac{da_{6,6}}{\pi}\right)  =1$ & $\left(  1,3\right)  $
 \label{tab:twistspodd}
  \end{longtable}
\endgroup}

\begin{proof}
Since $E$ is given by a \KNT model, we have that $E$ is given by a model $E:y^{2}=x^{3}+a_{2}x^{2}+a_{4}x+a_{6}$ such that $v(a_{i})=n_{i}$ is a nonnegative integer satisfying the bounds given in Table \ref{pnormalmodels}. For the quadratic twist $E^{d}$ of $E$ by $d$, we consider the Weierstrass model
\[
E^{d}:y^{2}=x^{3}+a_{2}dx^{2}+a_{4}d^{2}x+a_{6}d^{3}.
\]
Thus, $\mathcal{V}(E^{d})=\left(  \infty,n_{2}+v(d),\infty,n_{4}+2v(d),n_{6}+3v(d)\right)  $. Thus, if $v(d)=0$, then $\operatorname*{typ}(E)=\operatorname*{typ}(E^{d})$ since $E^{d}$ is given by a \KNT model by Proposition \ref{lem:pnormal}. Now suppose $v(d)=1$. Then, using Table~\ref{pnormalmodels}, it is easily verified that $\operatorname*{typ}(E^{d})$ is as claimed whenever $\operatorname*{typ}(E)\not \in \left\{  \rm{I}_{n\geq 0}^{\ast},\rm{IV}^{\ast},\rm{III}^{\ast},\rm{II}^{\ast}\right\}  $. For these remaining cases, let $F^{d}$ be the elliptic curve obtained from $E^{d}$ via the $K$-isomorphism $\left[  \pi,0,0,0\right]  $. Then,
\[
\mathcal{V}(F^{d})=\left\{
\begin{array}
[c]{cl}
\left(  \infty,0,\infty,0,0\mid v(\Delta)=0\right)   & \text{if } \operatorname*{typ}(E)=\rm{I}_{0}^{\ast},\\
\left(  \infty,=0,\infty,\frac{n+1}{2},=n\right)   & \text{if }\operatorname*{typ}(E)=\rm{I}_{n>0}^{\ast}\text{ and }n\not \in 2
\mathbb{Z},\\
\left(  \infty,=0,\infty,\frac{n+2}{2},=n\right)   & \text{if }\operatorname*{typ}(E)=\rm{I}_{n>0}^{\ast}\text{ and }n\in2
\mathbb{Z},\\
\left(  \infty,1,\infty,1,=1\right)   & \text{if }\operatorname*{typ}(E)=\rm{IV}^{\ast},\\
\left(  \infty,1,\infty,=1,2\right)   & \text{if }\operatorname*{typ}(E)=\rm{III}^{\ast},\\
\left(  \infty,1,\infty,2,=2\right)   & \text{if }\operatorname*{typ}(E)=\rm{II}^{\ast}.
\end{array}
\right.
\]
We note that when $\operatorname*{typ}(E)=\rm{I}_{n>0}^{\ast}$, the argument in Subcase 2b of the proof of Proposition~\ref{pnormalmodels} demonstrates that after an admissible change of variables, we may take a $K$-isomorphic elliptic curve to $E^{d}$ whose Weierstrass coefficients satisfy the valuations $\left(  \infty,=0,\infty,\frac{n+3}{2},=n\right)  $. Consequently, we have that in each case under consideration, there is an elliptic curve that is $K$-isomorphic to $E^{d}$ that is given by a \KNT model. By Proposition~\ref{lem:pnormal}, we conclude that $\operatorname*{typ}(E^{d})$ is as claimed in Table~\ref{tab:twistspodd}. Since Table~\ref{pnormalmodels} gives the local Tamagawa number $c$ of $E$, it remains to show that the local Tamagawa number $c^{d}$ of $E^{d}$ is as claimed in Table~\ref{tab:twistspodd}.

When $\operatorname*{typ}(E^{d})\in\left\{  \rm{I}_{0},\rm{II},\rm{III},\rm{III}^{\ast},\rm{II}^{\ast}\right\},$ the local Tamagawa number is uniquely determined. We now consider the remaining cases separately.

\textbf{Case 1.} Suppose $\operatorname*{typ}(E^{d})=\rm{I}_{n>0}$. Then $\left(\operatorname*{typ}(E),v(d)\right)  \in\left\{  \left(  \rm{I}_{n>0},0\right),\left(  \rm{I}_{n>0}^{\ast},1\right)  \right\}  $.

\qquad\textbf{Subcase 1a.} Suppose $\operatorname*{typ}(E)=\rm{I}_{n>0}$ with $v(d)=0$. By Proposition \ref{lem:pnormal}, we conclude that
\[
c^{d}=\left\{
\begin{array}
[c]{cl}%
n & \text{if }\left(  \frac{a_{2}d}{\pi}\right)  =1,\\
n-\left(  n\ \operatorname{mod}2\right)   & \text{if }\left(  \frac{a_{2}%
d}{\pi}\right)  =-1.
\end{array}
\right.
\]

\qquad\textbf{Subcase 1b.} Suppose $\operatorname*{typ}(E)= \rm{I}_{n>0}^{\ast}$ with $v(d)=1$. Then,
\[
\mathcal{V}(E^{d})=\left\{
\begin{array}
[c]{cl}
\left(  \infty,=2,\infty,\frac{n+9}{2},=n\right)   & \text{if } \operatorname*{typ}(E)=\rm{I}_{n>0}^{\ast}\text{ and }n\not \in 2\mathbb{Z} ,\\
\left(  \infty,=2,\infty,\frac{n+10}{2},=n\right)   & \text{if } \operatorname*{typ}(E)=\rm{I}_{n>0}^{\ast}\text{ and }n\in2 \mathbb{Z}.
\end{array}
\right.
\]
As remarked above, there exists a $K$-isomorphism $\psi$ from $E^{d}$ to an elliptic curve $\widetilde{F}^{d}$ such that $\mathcal{V}(\widetilde{F}^{d})=\left(  \infty,=0,\infty,\frac{n+3}{2},=n\right)  $. Such a $K$-isomorphism must be of the form $\psi = \left[  \pi,r,0,0\right]  $. Thus,
\[
\widetilde{F}^{d}:y^{2}=x^{3}+\pi^{-2}\left(  a_{2}d+3r\right)  x^{2}+\pi^{-4}\left(  a_{4}d^{2}+2a_{2}dr+3r^{2}\right)  x+\pi^{-6}\left(  a_{6}d^{3}+a_{4}d^{2}r+a_{2}dr^{2}+r^{3}\right)  .
\]
By inspection of the Weierstrass coefficient $a_{4}^{\prime}$ of $\widetilde{F}^{d}$, we have that $v(a_{4})\geq\frac{n+9}{2}$. Therefore,
\[
T^{2}-\pi^{-2}\left(  a_{2}d+3r\right)  \equiv T^{2}-a_{2,2}d\ \operatorname{mod}\pi.
\]
It follows by Tate's Algorithm that $c^{d}$ is as claimed in Table~\ref{tab:twistspodd}.

\textbf{Case 2.} Suppose $\operatorname*{typ}(E^{d})=\rm{IV}$. Then $\left(\operatorname*{typ}(E),v(d)\right)  \in\left\{  \left(  \rm{IV},0\right)  ,\left(\rm{II}^{\ast},1\right)  \right\}  $.

\qquad\textbf{Subcase 2a.} Suppose $\operatorname*{typ}(E)=\rm{IV}$ with $v(d)=0$. By Proposition~\ref{lem:pnormal}, we conclude that
\[
c^{d}=\left\{
\begin{array}
[c]{cl}
1 & \text{if }\left(  \frac{d^{3}a_{6,2}}{\pi}\right)  =\left(  \frac{da_{6,2}}{\pi}\right)  =-1,\\
3 & \text{if }\left(  \frac{d^{3}a_{6,2}}{\pi}\right)  =\left(  \frac{da_{6,2}}{\pi}\right)  =1.
\end{array}
\right.
\]

\qquad\textbf{Subcase 2b.} Suppose $\operatorname*{typ}(E)=\rm{II}^{\ast}$ with $v(d)=1$. Then, we consider the elliptic curve $F^{d}$ discussed at the start of the proof, whose Weierstrass model is
\[
F^{d}:y^{2}=x^{3}+a_{2,2}dx^{2}+a_{4,4}d^{2}x+a_{6,6}d^{3}.
\]
Since $F^{d}$ is given by a \KNT model, we conclude by Proposition~\ref{lem:pnormal} that
\[
c^{d}=\left\{
\begin{array}
[c]{cl}
1 & \text{if }\left(  \frac{d^{3}a_{6,8}}{\pi}\right)  =\left(  \frac{da_{6,6}}{\pi}\right)  =-1,\\
3 & \text{if }\left(  \frac{d^{3}a_{6,8}}{\pi}\right)  =\left(  \frac{da_{6,6}}{\pi}\right)  =1.
\end{array}
\right.
\]

\textbf{Case 3.} Suppose $\operatorname*{typ}(E^{d})=\rm{I}_{0}^{\ast}$. Then $\left(  \operatorname*{typ}(E),v(d)\right)  \in\left\{  \left(  \rm{I}_{0}^{\ast},0\right)  ,\left(  \rm{I}_{0},1\right)  \right\}  $. Now set
\begin{align*}
P(X)  & =X^{3}+a_{2,1}X^{2}+a_{4,2}X+a_{6,3},\\
P_{d}(X)  & =X^{3}+a_{2,1}dX^{2}+a_{4,2}d^{2}X+d^{3}a_{6,3}.
\end{align*}
Since $E^{d}$ is given by a \KNT model, we have that $c^{d}=1+\#\left\{\alpha\in\kappa\mid P_{d}(\alpha)=0\right\}  $ by Proposition~\ref{lem:pnormal}. When $\operatorname*{typ}(E)=\rm{I}_{0}^{\ast}$ with $v(d)=0$, we can improve upon this via the following observation. The local Tamagawa number $c$ of $E$ is given by $c=1+\#\left\{  \alpha\in\kappa\mid P(\alpha)=0\right\}  $. Now let $\theta_{1},\theta_{2},$ and $\theta_{3}$ be the distinct roots of $P(X)$ over an algebraic closure $\overline{\kappa}$ of $\kappa$. Thus, $P(X)=\left(  X-\theta_{1}\right)  \left(  X-\theta_{2}\right)  \left(  X-\theta_{3}\right)  $. It is then checked that
\[
P_{d}(X)=\left(  X-d\theta_{1}\right)  \left(  X-d\theta_{2}\right)  \left(
X-d\theta_{3}\right).
\]
Since $d\theta_{i}\in\kappa$ if and only if $\theta_{i}\in\kappa$, we conclude that $c^{d}=c.$

\textbf{Case 4.} Suppose $\operatorname*{typ}(E^{d})=\rm{I}_{n>0}^{\ast}$. Then $\left(  \operatorname*{typ}(E),v(d)\right)  \in\left\{  \left(  \rm{I}_{n>0}^{\ast},0\right)  ,\left(  \rm{I}_{n>0},1\right)  \right\}  $. The local Tamagawa number $c^{d}$ of $E^{d}$ depends on whether
\[
f_{d}(t)=\left\{
\begin{array}
[c]{cl}
t^{2}-d^{3}a_{6,n+3} & \text{if }n\text{ is odd,}\\
da_{2,1}t^{2}+d^{2}a_{4,\frac{n+4}{2}}t+d^{3}a_{6,n+3} & \text{if }n\text{ is even,}
\end{array}
\right.
\]
splits in $\kappa$. Since $E$ is given by a \KNT model, we have that if $n$ is even, then $d^{2}a_{4,\frac{n+4}{2}}\equiv0\ \operatorname{mod}\pi$. Hence,
\[
c^{d}=\left\{
\begin{array}
[c]{clllll}
2 & \text{if }\left(  i\right)  \ \left(  \frac{d^{3}a_{6,n+3}}{\pi}\right)
=-1 & \text{ if }n\not \in 2\mathbb{Z} &\text{or} &\left(  ii\right)  \ \left(  \frac{-d^{2}a_{2,1}^{-1}a_{6,n+3}}{\pi}\right)  =-1 &\text{if }n\in2\mathbb{Z},\\
4 & \text{if }\left(  i\right)  \ \left(  \frac{d^{3}a_{6,n+3}}{\pi}\right)=1 & \text{ if }n\not \in 2\mathbb{Z} & \text{or} & \left(  ii\right)  \ \left(  \frac{-d^{2}a_{2,1}^{-1}a_{6,n+3}}{\pi}\right)  =1 &\text{if }n\in2\mathbb{Z}.
\end{array}
\right.
\]
The results now follows since
\begin{align*}
\left(  \frac{d^{3}a_{6,n+3}}{\pi}\right)    & =\left\{
\begin{array}
[c]{cl}
\left(  \frac{da_{6,n+3}}{\pi}\right)   & \text{if }v(d)=0,\\
\left(  \frac{da_{6,n+1}}{\pi}\right)   & \text{if }v(d)=1,
\end{array}
\right.  \\
\left(  \frac{-d^{2}a_{2,1}^{-1}a_{6,n+3}}{\pi}\right)    & =\left\{
\begin{array}
[c]{cl}
\left(  \frac{-a_{2}^{-1}a_{6,n+2}}{\pi}\right)   & \text{if }v(d)=0,\\
\left(  \frac{-a_{2}^{-1}a_{6,n}}{\pi}\right)   & \text{if }v(d)=1.
\end{array}
\right.
\end{align*}

\textbf{Case 5.} Suppose $\operatorname*{typ}(E^{d})=\rm{IV}^{\ast}$. Then $\left(\operatorname*{typ}(E),v(d)\right)  \in\left\{  \left(  \rm{IV}^{\ast},0\right),\left(  \rm{II},1\right)  \right\}  $.

\qquad\textbf{Subcase 5a.} Suppose $\operatorname*{typ}(E)=\rm{IV}^{\ast}$ with $v(d)=0$. Since $E^{d}$ is given by a \KNT model, Proposition~\ref{lem:pnormal} implies that
\[
c^{d}=\left\{
\begin{array}
[c]{cl}
1 & \text{if }\left(  \frac{d^{3}a_{6,4}}{\pi}\right)  =\left(  \frac{da_{6,4}}{\pi}\right)  =-1,\\
3 & \text{if }\left(  \frac{d^{3}a_{6,4}}{\pi}\right)  =\left(  \frac{da_{6,4}}{\pi}\right)  =1.
\end{array}
\right.
\]

\qquad\textbf{Subcase 5b.} Suppose $\operatorname*{typ}(E)=\rm{II}$ with $v(d)=1$. Since $E^{d}$ is given by a \KNT model, we conclude by Proposition~\ref{lem:pnormal} that
\[
c^{d}=\left\{
\begin{array}
[c]{cl}
1 & \text{if }\left(  \frac{d^{3}a_{6,4}}{\pi}\right)  =\left(  \frac{da_{6,2}}{\pi}\right)  =-1,\\
3 & \text{if }\left(  \frac{d^{3}a_{6,4}}{\pi}\right)  =\left(  \frac{da_{6,2}}{\pi}\right)  =1.
\end{array}
\right. \qedhere
\]

\end{proof}

%%%%%%%%%%%%%%%%%%%%%%%%%%%%%%%%%
%%%%%% p=2 odd case %%%%%%%%%%%%%%
%%%%%%%%%%%%%%%%%%%%%%%%%%%%%%%%%%

\section{Local data of quadratic twists over \texorpdfstring{$\Q_2$}{Q\_2}}

Suppose an elliptic curve over $\Q_2$ is given by a \KNT model~$E$. For such a model, Lemma~\ref{Lemmadisccond} provides conditions on the Weierstrass coefficients of $E$ to determine the minimal discriminant valuation and conductor exponent. Similarly, for the Kodaira-N\'{e}ron types for which the local Tamagawa number is not uniquely determined, Corollary \ref{TamagawaQ2} gives conditions on the Weierstrass coefficients of $E$ to determine the local Tamagawa number of $E$. Next, let $E^{d}$ denote the quadratic twist of $E$ by $d\in \mathbb{Q}_2^{\times}/(\mathbb{Q}_2^{\times})^{2}$. Thus, we may assume that $v(d)\in\left\{  0,1\right\}  $. 

The goal for this section is to determine conditions on the Weierstrass coefficients of $E$ to deduce the Kodaira-N\'{e}ron type, conductor exponent, minimal discriminant valuation, and local Tamagawa number of $E^{d}$. The strategy is similar to that undertaken in Theorem \ref{tamanot2} for the case when $\operatorname*{char}\kappa\neq2$, but more involved due to the complexities of working over $\mathbb{Q}_{2}$. Namely, we will proceed by cases based on the Kodaira-N\'{e}ron type of $E$. Specifically, for a fixed Kodaira-type, we consider an elliptic curve given by a \KNT model
\[
E:y^{2}+a_{1}xy+a_{3}y=x^{3}+a_{2}x^{2}+a_{4}x+a_{6}.
\]
Consequently, $v(a_{i})\geq0$ for each $i$. We then fix the following model for the quadratic twist $E^{d}$:
\begin{equation}
E^{d}:y^{2}=x^{3}+d\left(  a_{2}^{2}+4a_{2}\right)  x^{2}+d^{2}\left(
8a_{1}a_{3}+16a_{4}\right)  x+d^{3}\left(  16a_{3}^{2}+64a_{6}\right)
.\label{Edmodel}
\end{equation}
Each case then consists of further subcases, whose ultimate goal is to find an elliptic curve $F^{d}$ that is $\Q_2$-isomorphic to $E^{d}$ such that $F^{d}$ is given by a \KNT model. Once we have established this, we can conclude the local data of $E^{d}$ from Proposition \ref{lem:pnormal}, Corollary \ref{TamagawaQ2}, and Lemma \ref{Lemmadisccond}. Due to the length of the argument, we consider the cases $v(d)=0$ and $v(d)=1$ separately. The proofs have accompanying notebooks written in SageMath \cite{sagemath}, and can be found in \cite{gittwists}.

\begin{theorem}\label{thmQ2combined}
Suppose an elliptic curve over $\Q_2$ is given by a \KNT model~$E$. For $d\in\mathbb{Q}_{2}$ with $v(d)=0$ (resp. $1$), let $E^{d}$ denote the quadratic twist of $E$ by $d$. Then, Table \ref{tab:localdata-dodd} (resp. \ref{tab:localdata-deven}) gives explicit conditions on the Weierstrass coefficients $a_{i}$ of $E$ to determine the Kodaira-N\'{e}ron types of $E$ and $E^{d}$. 

Additional conditions are also provided to determine the local Tamagawa numbers $c$ and $c^{d}$, the minimal discriminant valuations $\delta$ and $\delta^{d}$, and conductor exponents $f$ and $f^{d}$ of $E$ and $E^{d}$, respectively. When applicable, Table~\ref{tab:localdata-dodd} (resp. \ref{tab:localdata-deven}) references $n'=2-(n \mod 2)$, as well as additional conditions on a quantity $P_{R,j}^{v(d)}$, whose exact value is given in Table \ref{tab:PRj}. We note that in some cases, Tables~\ref{tab:localdata-dodd}~and~\ref{tab:localdata-deven} gives the possible pairs $\left(\delta_{2},\delta_{2}^{d}\right)  $ and $\left(  f_{2},f_{2}^{d}\right)  $. In these instances, the exact values of these pairs can then be determined from Lemma~\ref{Lemmadisccond}. Similarly, if $c$ appears in the table, then its exact value can be obtained from Corollary~\ref{TamagawaQ2}.
\end{theorem}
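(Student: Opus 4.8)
The plan is to reduce the determination of all the local data of $E^d$ to the three results already established for \KNT models, namely Proposition~\ref{lem:pnormal}, Corollary~\ref{TamagawaQ2}, and Lemma~\ref{Lemmadisccond}. Since $E$ is given by a \KNT model, $\operatorname{typ}(E)$, $c$, $\delta$, and $f$ are read off directly from Tables~\ref{pnormalmodels},~\ref{TamaQ2}, and~\ref{tab:discriminantconductor}. The remaining task is therefore to exhibit, for each Kodaira--N\'eron type of $E$ and each $v(d)\in\{0,1\}$, an elliptic curve $F^d$ that is $\Q_2$-isomorphic to $E^d$ and is itself given by a \KNT model; once this is done, the same three results applied to $F^d$ yield $\operatorname{typ}(E^d)$, $c^d$, $\delta^d$, and $f^d$. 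Following the discussion preceding the theorem, I would split the argument into the cases $v(d)=0$ and $v(d)=1$, treated in Sections~\ref{sec5_1} and~\ref{sec5_2}.

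First I would fix the twist model~(\ref{Edmodel}) and compute $\mathcal{V}(E^d)$ from the valuations $\mathcal{V}(E)$ recorded in Table~\ref{pnormalmodels}, carefully tracking the factors $8$, $16$, and $64$ (of valuations $3$, $4$, and $6$ over $\Q_2$) introduced by the twist. For each Kodaira type I would then apply a sequence of admissible changes of variables $[u,r,s,w]$ to bring $E^d$ into \KNT form, exactly as in the proof of Proposition~\ref{lem:pnormal}. The characteristic-$2$ manipulations rely on the tools already developed: Lemma~\ref{Lemmachar2squares} to complete squares modulo $\pi$, Lemma~\ref{Lem:Inmodel} to pin down the multiplicative types via the discriminant decomposition~(\ref{InDiscvaluation}), and Lemma~\ref{char2splitpoly} for the Artin--Schreier splitting conditions. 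Since $\kappa=\mathbb{F}_2$, Corollary~\ref{TamagawaQ2} gives $\operatorname{Im}T=\{0\}$, so the splitting conditions governing $c^d$ collapse to valuation comparisons among the relevant coefficients; the auxiliary quantities $P_{R,j}^{v(d)}$ of Table~\ref{tab:PRj} package these comparisons.

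The main obstacle, and the reason this case is far more delicate than Theorem~\ref{tamanot2}, is that in residue characteristic $2$ the clean reduction $[1,0,-\tfrac{a_1}{2},-\tfrac{a_3}{2}]$ that eliminated $a_1$ and $a_3$ in the odd case is unavailable, so one must work throughout with the full Weierstrass model, and the twist formula injects high powers of $2$ that shift valuations in type-dependent ways. As a result the reductions must be carried out by hand for each Kodaira type and both values of $v(d)$, producing the large case tree anticipated by Silverman. I expect the hardest cases to be $\operatorname{typ}(E)\in\{\mathrm{I}_n,\mathrm{I}_n^{\ast}\}$, where the parameter $n$ forces a descent argument mirroring Subcase~2a in the proof of Proposition~\ref{lem:pnormal}, together with the subtle boundary cases (such as $\mathrm{I}_0\to\mathrm{I}_0$ and $\mathrm{I}_n\to\mathrm{I}_n$) that are precisely the ones missing from \cite{Comalada}. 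Each reduction and the resulting \KNT form would be confirmed by the accompanying SageMath notebooks~\cite{gittwists}, after which the local data of $E^d$ is assembled from Proposition~\ref{lem:pnormal}, Corollary~\ref{TamagawaQ2}, and Lemma~\ref{Lemmadisccond}.
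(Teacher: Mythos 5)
Your proposal matches the paper's proof essentially exactly: both reduce everything to producing, for each Kodaira type and each $v(d)\in\{0,1\}$, an explicit admissible change of variables carrying $E^{d}$ to a \KNT model $F^{d}_{R,j}$ (verified in the SageMath companion), and then read off $\operatorname{typ}(E^{d})$, $c^{d}$, $\delta^{d}$, $f^{d}$ from Proposition~\ref{lem:pnormal}, Corollary~\ref{TamagawaQ2}, and Lemma~\ref{Lemmadisccond}, with the $P_{R,j}^{v(d)}$ packaging the resulting valuation comparisons. The only cosmetic difference is that for $v(d)=0$ the paper pins down $\delta^{d}$ via the congruence $\delta\equiv\delta^{d}\ \operatorname{mod}12$ rather than re-applying Lemma~\ref{Lemmadisccond} to $F^{d}_{R,j}$, but both routes give the same values.
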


{ \begingroup \tiny
\renewcommand{\arraystretch}{1.3}
 \begin{longtable}{cccccclc}
 	\caption{Local data for $E/\Q_2$ and $E^d/\Q_2$ with $v(d)=0$ }
  \label{tab:localdata-dodd}
  \\\hline
$R$ & \multicolumn{2}{c}{Conditions} &
$R^{d}$ & $\left(  \delta,\delta^{d}\right)  $ &
$\left(  f,f^{d}\right)  $ & Additional conditions
& $\left(c,c^{d}\right)  $\\
	\hline
	\endhead
	\hline 
	\multicolumn{4}{r}{\emph{continued on next page}}
	\endfoot
	\hline 
	\endlastfoot

$\rm{I}_{0}$ & $d\equiv1\ \operatorname{mod}4$ &  & $\rm{I}_{0}$ & $\left(  0,0\right)
$ & $\left(  0,0\right)  $ &  & $\left(  1,1\right)  $\\\cmidrule{2-8}
& $d\equiv3\ \operatorname{mod}4$ & $v(a_{1})=0$ & $\rm{I}_{4}^{\ast}$ &
$\left(  0,12\right)  $ & $\left(  0,4\right)  $ & $a_{6}\equiv
1,2\ \operatorname{mod}4$ & $\left(  1,2\right)  $\\\cmidrule{7-8}
&  &  &  &  &  & $a_{6}\equiv0,3\ \operatorname{mod}4$ & $\left(
1,4\right)  $\\\cmidrule{3-8}
&  & $v(a_{1})\geq1$ & $\rm{II}^{\ast}$ & $\left(  0,12\right)  $ & $\left(
0,4\right)  $ &  & $\left(  1,1\right)  $\\\midrule

$\rm{I}_{n>0}$ & $d\equiv1\ \operatorname{mod}4$ &  & $\rm{I}_{n}$ & $\left(n,n\right)  $ & $\left(  1,1\right)  $ & $v(d-1+4a_2)=2$ & $\left(  c,n^{\prime}\right)  $\\\cmidrule{7-8}
&  &  &  &  &  & $v(d-1+4a_2)\ge 3$ & $\left(c,n\right)  $\\\cmidrule{2-8}

& $d\equiv3\ \operatorname{mod}4$ & & $\rm{I}_{n+4}^{\ast}$ & $\left(n,12+n\right)  $ & $\left(  1,4\right)  $ & $v(n)=0, v(P_{R,1})=n+1$ & $\left(  c,2\right)  $\\\cmidrule{7-8}
&  &  &  &  &  & $v(n)=0,v(P_{R,1})\geq n+2$ & $\left(  c,4\right)  $\\\cmidrule{7-8}
&  &  &  &  &  & $v(n)\ge 1, v(P_{R,2})=n+2$ & $\left(  c,2\right)  $\\\cmidrule{7-8}
&  &  &  &  &  & $v(n)\ge 1, v(P_{R,2})\geq n+3$ & $\left(  c,4\right)  $\\\midrule

$\rm{II}$ & $v(a_{3})\geq2$ &  & $\rm{II}$ & $\left(  k,k\right)  $ & $\left(k,k\right)  $ & for $k\in\left\{  6,7\right\}  $ & $\left(  1,1\right)  $\\\cmidrule{2-8}
& $v(a_{3})=1$ & $d\equiv1\ \operatorname{mod}4$ & $\rm{II}$ & $\left(4,4\right)  $ & $\left(  4,4\right)  $ &  & $\left(  1,1\right)  $\\\cmidrule{3-8}
&  & $d\equiv3\ \operatorname{mod}4,v(a_{4})=1$ & $\rm{III}$ & $\left(4,4\right)  $ & $\left(  4,3\right)  $ &  & $\left(  1,2\right)  $\\\cmidrule{3-8}
&  & $d\equiv3\ \operatorname{mod}4,v(a_{4})\geq2$ & $\rm{IV}$ & $\left(
4,4\right)  $ & $\left(  4,2\right)  $ & $v(da_{6}+d -1)=2$
& $\left(  1,1\right)  $\\\cmidrule{7-8}
&  &  &  &  &  & $v(da_{6}+d -1)\ge 3$ & $\left(  1,3\right)
$\\\midrule

$\rm{III}$ & $d\equiv1\ \operatorname{mod}4$ &  & $\rm{III}$ & $\left(  k,k\right)  $ &
$\left(  k-1,k-1\right)  $ & for $k\in\left\{  4,6,8,9\right\}  $ & $\left(
2,2\right)  $\\\cmidrule{2-8}
& $d\equiv3\ \operatorname{mod}4$ & $v(a_{3})=1$ & $\rm{II}$ & $\left(
4,4\right)  $ & $\left(  3,4\right)  $ &  & $\left(  2,1\right)  $\\\cmidrule{3-8}
&  & $v(a_{3})\geq2$ & $\rm{III}$ & $\left(  k,k\right)  $ & $\left(
k-1,k-1\right)  $ & for $k\in\left\{  6,8,9\right\}  $ & $\left(  2,2\right)
$\\\midrule

%%%IV%%%%%
$\rm{IV}$ & $d\equiv3\ \operatorname{mod}4$ &  & $\rm{II}$ & $\left(  4,4\right)  $ & $\left(  2,4\right)  $ &  & $\left(  c,1\right)  $\\\cmidrule{2-8}

& $d\equiv1\ \operatorname{mod}4$ &  & $\rm{IV}$ & $\left(  4,4\right)  $ & $\left(  2,2\right)  $ & $v(da_{6}+d -1)=2$ & $\left(  c,1\right)  $\\\cmidrule{7-8}
&  &  &  &  &  & $v(da_{6}+d -1)\ge 3$ & $\left(c,3\right)  $\\\midrule

%%%I_0^*%%%%%
$\rm{I}_{0}^{\ast}$ & $d\equiv1\ \operatorname{mod}4$ &  & $\rm{I}_{0}^{\ast}$ &
$\left(  k,k\right)  $ & $\left(  k-4,k-4\right)  $ & $v(a_{2})=1$ &
$\left(  1,1\right)  $\\\cmidrule{7-8}
&  &  &  & \multicolumn{2}{c}{for $k\in\left\{  8,9,10\right\}  $} &$v(a_{2})\geq2$ & $\left(  2,2\right)  $\\\cmidrule{2-8}

& $d\equiv3\ \operatorname{mod}4$ & $v(a_{3})\geq3$ & $\rm{I}_{0}^{\ast}$ &$\left(  k,k\right)  $ & $\left(  k-4,k-4\right)  $ & $v(a_{1}+a_{2})=1$ & $\left(  c,1\right)  $\\\cmidrule{7-8}
&  &  &  & \multicolumn{2}{c}{for $k\in\left\{  9,10\right\}  $} & $v(a_{1}+a_{2})\ge 2$ & $\left(  c,2\right)  $\\\cmidrule{3-8}

&  & $v(a_{3})=2,v(a_{1}+a_{2})=1$ & $\rm{I}_{1}^{\ast}$ & $\left(8,8\right)  $ & $\left(  4,3\right)  $ & $v(P_{R,1}^{0})=4$ & $\left(  c,2\right)  $\\\cmidrule{7-8}
&  &  &  &  &  & $v(P_{R,1}^{0})\ge 5$ & $\left(  c,4\right)  $\\\cmidrule{3-8}

&  & $v(a_{3})=2,v(a_{1}+a_{2})\geq2$ & $\rm{IV}^{\ast}$ & $\left(
8,8\right)  $ & $\left(  4,2\right)  $ & $v(P_{R,1}^{0})= 4$ & $\left(  c,1\right)  $\\\cmidrule{7-8}
&  &  &  &  &  & $v(P_{R,1}^{0})\ge 5$ & $\left(  c,3\right)  $\\\midrule

%%%%%%%%%%%In*%%%%%%%%%%%%%%%
$\rm{I}_{n>0}^{\ast}$ & $d\equiv1\ \operatorname{mod}4$ &  & $\rm{I}_{n}^{\ast}$ & \multicolumn{2}{l}{$n=1\ \left(  8,8\right)  \hspace{3em} \left(  4,4\right)  $}  &$v(n)=0,v(P_{R,1}^{0})=n+3$ & $\left(  c,2\right)  $\\\cmidrule{7-8}
&  &  &  &  \multicolumn{2}{l}{$n=2\ \underset{\text{for }k\in\left\{  10,12,13\right\}
}{\left(  k,k\right)  \hspace{1.5em} \left(  k-6,k-6\right)  }$}  & $v(n)=0,v(P_{R,1}^{0})\geq n+4$ & $\left(  c,4\right)  $\\\cmidrule{7-8}
&  &  &  &  \multicolumn{2}{l}{$n=3\ \underset{\text{for }k\in\left\{  11,12\right\}
}{\left(  k,k\right)  \hspace{1.5em} \left(  k-7,k-7\right)  }$}  & $v(n)\geq1,v(a_{6})=n+3$ & $\left(  2,2\right)  $\\\cmidrule{7-8}
&  &  &    \multicolumn{3}{c}{$n\geq4\
\begin{array}
[c]{c}%
\left(  n+8,n+8\right)  \\
\left(  n+10,n+10\right)
\end{array}
\
\begin{array}
[c]{c}%
\left(  4,4\right)  \\
\left(  6,6\right)
\end{array}
$} & $v(n)\geq1,v(a_{6})\geq n+4$ & $\left(  4,4\right)  $\\\cmidrule{2-8}

& $d\equiv3\ \operatorname{mod}4$ & $n=1$ & $\rm{I}_{0}^{\ast}$ & $(8,8)$ & $(3,4)$ & $v(a_{1})\geq2$ & $\left(  c,1\right)  $\\\cmidrule{7-8}
&  &  &  &  &  & $v(a_{1})=1$ & $\left(  c,2\right)  $\\\cmidrule{3-8}

&  & $n=2,v(a_{1})=1$ & $\rm{III}^{\ast}$ & $(10,10)$ & $(4,3)$ &  & $\left(  c,2\right)  $\\\cmidrule{3-8}

&  & $v(n)\geq1,v(a_{1})\geq2$ & $\rm{I}_{n}^{\ast}$ & $(n+10,n+10)$ & $(6,6)$ & $v(a_{3}^{2}+2a_{6})=n+4$ & $\left(  c,2\right)  $\\\cmidrule{7-8}
&  &  &  & \multicolumn{2}{l}{or $(13,13)$ \hspace{2em} $(7,7)$ if $n=2$} &$v(a_{3}^{2}+2a_{6})\geq n+5$ & $\left(  c,4\right)  $\\\cmidrule{3-8}

&  & $n=3,v(a_{1})=1$ & $\rm{II}^{\ast}$ & $(11,11)$ & $(4,3)$ &  & $\left(  c,1\right)  $\\\cmidrule{3-8}

&  & $n \ge 3,v(n)=0, v(a_{1})\geq2$ & $\rm{I}_{n}^{\ast}$ &\multicolumn{2}{l}{$n=3$ $(12,12)$ \hspace{3.3em} $(5,5)$} & $v(P_{R,2}^0)= n+4$ & $\left(  c,2\right)  $\\\cmidrule{7-8}
&  &  &  &  \multicolumn{2}{l}{$n\ge 5$ $(n+10,n+10)$ \hspace{0.5em} $(6,6)$}  & $v(P_{R,2}^0)\ge n+5$ & $\left(  c,4\right)  $\\\cmidrule{3-8}

&  & $n=4,v(a_{1})=1$ & $\rm{I}_{0}$ & $(12,0)$ & $(4,0)$ &  & $\left(  c,1\right)  $\\\cmidrule{3-8}

&  & $n\geq5,v(a_{1})=1$ & $\rm{I}_{n-4}$ & $(n+8,n-4)$ & $(4,1)$ & $v(d-1+a_{2}d)=2$ & $\left(c,n^{\prime}\right)  $\\\cmidrule{7-8}
&  &  &  &  &  & $v(d-1+a_{2}d)\geq3$ & $\left(  c,n-4\right)  $\\\midrule

$\rm{IV}^{\ast}$ & $d\equiv1\ \operatorname{mod}4$ &  & $\rm{IV}^{\ast}$ & $\left(8,8\right)  $ & $\left(  2,2\right)  $ & $v(4d-4 +a_6)=4 $ & $\left(  c,1\right)  $\\\cmidrule{7-8}
&  &  &  &  &  & $v(4d-4 +a_6)\ge 5$ & $\left(c,3\right)  $\\\cmidrule{2-8}

& $d\equiv3\ \operatorname{mod}4$ &  & $\rm{I}_{0}^{\ast}$ & $\left(  8,8\right)  $ & $\left(  2,4\right)  $ & $v(a_{1})=1$ & $\left(c,1\right)  $\\\cmidrule{7-8}
&  &  &  &  &  & $v(a_{1})\geq2$ & $\left(  c,2\right)  $\\\midrule

$\rm{III}^{\ast}$ & \multicolumn{2}{l}{\hspace{0.3em} $v(a_{1})\geq2$ or $d\equiv
1\ \operatorname{mod}4$} & $\rm{III}^{\ast}$ & $\left(  k,k\right)  $ & $\left(
k-7,k-7\right)  $ & for $k\in\left\{ 12,14,15\right\}  $ & $\left(
2,2\right)  $\\\cmidrule{2-8}
& $v(a_{1})=1$ & $d\equiv3\ \operatorname{mod}4$ & $\rm{I}_{2}^{\ast}$ &
$\left(  10,10\right)  $ & $\left(  3,4\right)  $ & $v(a_{3}^{2}%
+2a_{6})=6$ & $\left(  2,2\right)  $\\\cmidrule{7-8}
&  &  &  &  &  & $v(a_{3}^{2}+2a_{6})\geq7$ & $\left(  2,4\right)  $\\\midrule

$\rm{II}^{\ast}$ & $d\equiv1\ \operatorname{mod}4$ &  & $\rm{II}^{\ast}$ & $\left(
k,k\right)  $ & $\left(  k-8,k-8\right)  $ & for $k\in\left\{
11,12,14\right\}  $ & $\left(  1,1\right)  $\\\cmidrule{2-8}
& $d\equiv3\ \operatorname{mod}4$ & $v(a_{1})=1,v(a_{3})=3$ &
$\rm{I}_{3}^{\ast}$ & $\left(  11,11\right)  $ & $\left(  3,4\right)  $ &
$v(16d-16+a_{6}d)=6$ & $\left(  1,2\right)
$\\\cmidrule{7-8}
&  &  &  &  &  & $v(16d-16+a_{6}d)\ge 7$ &
$\left(  1,4\right)  $\\\cmidrule{3-8}
&  & $v(a_{1})=1,v(a_{3})\geq4$ & $\rm{I}_{3}^{\ast}$ & $\left(
11,11\right)  $ & $\left(  3,4\right)  $ & $v(P_{R,1}^{0})=6$ & $\left(  1,2\right)  $\\\cmidrule{7-8}
&  &  &  &  &  & $v(P_{R,1}^{0})\ge 7$ &
$\left(  1,4\right)  $\\\cmidrule{3-8}
&  & $v(a_{1})\geq2,v(a_{3})=3$ & $\rm{I}_{0}$ & $\left(  12,0\right)  $ &
$\left(  4,0\right)  $ &  & $\left(  1,1\right)  $\\\cmidrule{3-8}
&  & $v(a_{1})\geq2,v(a_{3})\geq4$ & $\rm{II}^{\ast}$ & $\left(
14,14\right)  $ & $\left(  6,6\right)  $ &  & $\left(  1,1\right)$
\end{longtable}
\endgroup}

%\newpage
%%%%%%%%%%%%%%%%%%%%%%%%%%
%%%%Table for d even%%%%%%
%%%%%%%%%%%%%%%%%%%%%%%%%
{ \begingroup \tiny
\renewcommand{\arraystretch}{1.3}
 \begin{longtable}{cccccclc}
 	\caption{Local data for $E/\Q_2$ and $E^d/\Q_2$ with $v(d)=1$ }
  \label{tab:localdata-deven}
  \\\hline
$R$ & \multicolumn{2}{c}{Conditions} &
$R^{d}$ & $\left(  \delta,\delta^{d}\right)  $ &
$\left(  f,f^{d}\right)  $ & Additional conditions
& $\left(c,c^{d}\right)  $\\
	\hline
	\endhead
	\hline 
	\multicolumn{4}{r}{\emph{continued on next page}}
	\endfoot
	\hline 
	\endlastfoot
$\rm{I}_{0}$ & $v(a_{1})=0$ & $v(a_{6})=0$ & $\rm{I}_{8}^{\ast}$ & $\left(  0,18\right)$ & $\left(  0,6\right)  $ & $v(P_{R,1})=4$ & $(1,2)$\\\cmidrule{7-8}
&  &  &  &  &  & $v(P_{R,1})\geq5$ & $(1,4)$\\\cmidrule{3-8}
&  & $v(a_{6})\geq1$ & $\rm{I}_{8}^{\ast}$ & $\left(  0,18\right)  $ & $\left(
0,6\right)  $ & $v(P_{R,2})=4$ & $(1,2)$\\\cmidrule{7-8}
&  &  &  &  &  & $v(P_{R,2})\geq5$ & $(1,4)$\\\cmidrule{2-8}
& $v(a_{1})\geq1$ &  & $\rm{II}$ & $(0,6)$ & $(0,6)$ &  & $(1,1)$\\\midrule

%%%%%%% I_n %%%%%%%%

$\rm{I}_{n>0}$ &  &  & $\rm{I}_{n+8}^{\ast}$ & $\left(  n,n+18\right)  $ & $\left(
1,6\right)  $ & $v(n)=0:v(P_{R,1})=n+4$ & $\left(  c,2\right)  $\\
&  &  &  &  &  & or $v(P_{R,3})=n+4$ & \\\cmidrule{7-7}
 &  &  &  &  &  & $v(n)\geq1:v(P_{R,2})=n+2$ & \\
&  &  &  &  &  & or $v(P_{R,4})=n+2$ & \\\cmidrule{7-8}
 &  &  &  &  &  & $v(n)=0:v(P_{R,1})\geq n+5$ & $\left(  c,4\right)  $\\
&  &  &  &  &  & or $v(P_{R,3})\geq
n+5$ & \\\cmidrule{7-7}
 &  &  &  &  &  & $v(n)\geq1:v(P_{R,2})\geq n+3$ &\\
&  &  &  &  &  & or $v(P_{R,4}%
)\geq n+3$ & \\\midrule

%%%%%%%%%%%%% II %%%%%%%%%%%%%%%

$\rm{II}$ & $v(a_{3})=1$ &  & $\rm{I}_{0}^{\ast}$ & $\left(  4,10\right)  $ & $\left(4,6\right)  $ & $v(a_{1})=1$ & $(1,1)$\\
&  &  &  &  &  & $v(a_{1})\geq2$ & $(1,2)$\\\cmidrule{2-8}
& $v(a_{3})\geq2,$ & $v(a_{4})=1$ & $\rm{I}_{2}^{\ast}$ & $\left(  7,13\right)  $ & $\left(  7,7\right)  $ & $v(a_{3}^{2}+4a_{6}-d^{3})=4$ & $\left(  1,2\right)$\\\cmidrule{7-8}
& $v(a_{1})=1$ &  &  &  &  & $v(a_{3}^{2}+4a_{6}-d^{3})\geq5$ & $\left(1,4\right)  $\\\cmidrule{3-8}
&  & $v(a_{4})\geq2$ & $\rm{I}_{3}^{\ast}$ & $\left(  6,12\right)  $ & $\left(6,5\right)  $ & $v(P_{R,1})=5$ or $v(P_{R,2})=5$ & $\left(  1,2\right)  $\\\cmidrule{7-8}
&  &  &  &  &  & $v(P_{R,1})\geq6$ or $v(P_{R,2})\geq6$ & $\left(  1,4\right)$\\\cmidrule{2-8}
& $v(a_{3})\geq2,$ & $v(a_{4})=1$ & $\rm{III}^{\ast}$ & $\left(  6,12\right)  $ &$\left(  6,5\right)  $ &  & $\left(  1,2\right)  $\\\cmidrule{3-8}
& $v(a_{1})\geq2$ & $v(a_{4})\geq2,v(P_{R,1})=4$ & $\rm{II}^{\ast}$ & $\left(  6,12\right)  $ & $\left(  6,4\right)  $ &  & $\left(  1,1\right)  $\\\cmidrule{3-8}
&  & $v(a_{4})\geq2,v(P_{R,1})\geq5$ & $\rm{I}_{0}$ & $\left( 6,0\right)  $ & $\left(  6,0\right)  $ &  & $\left(  1,1\right)  $\\\midrule

 %%%%%%% III %%%%%%%

$\rm{III}$ & $v(a_{3})=1$ &  & $\rm{I}_{0}^{\ast}$ & $\left(  4,10\right)  $ & $\left(
3,6\right)  $ & $v(a_{1})=1$ & $\left(  2,1\right)  $\\\cmidrule{7-8}
&  &  &  &  &  & $v(a_{1})\geq2$ & $\left(  2,2\right)  $\\\cmidrule{2-8}
& $v(a_{3})\geq2$ & $v(a_{1})=1$ & $\rm{I}_{2}^{\ast}$ & $\left(  6,12\right)  $ & $\left(  5,6\right)  $ & $v(a_{3}^{2}+4a_{6})=4$ & $\left(  2,2\right)  $\\\cmidrule{7-8}
&  &  &  &  &  & $v(a_{3}^{2}+4a_{6})\geq5$ & $\left(  2,4\right)  $\\\cmidrule{3-8}
&  & $v(a_{1})\geq2$ & $\rm{III}^{\ast}$ & $\left(  k,k+6\right)  $ & $\left(k-1,k-1\right)  $ & \multicolumn{1}{l}{for $k\in\{8,9\}$} & $\left(2,2\right)  $\\\midrule

 %%%%%%% IV %%%%%%%

$\rm{IV}$ &  &  & $\rm{I}_{0}^{\ast}$ & $\left(  4,10\right)  $ & $\left(  2,6\right)  $ & $v(a_{1})=1$ & $\left(  c,1\right)  $\\\cmidrule{7-8}
&  &  &  &  &  & $v(a_{1})\geq2$ & $\left(  c,2\right)  $\\\midrule

 %%%%%%% I0* %%%%%%%

$\rm{I}_{0}^{\ast}$ & $v(a_{1})=1$ & $v(a_{3})=2$ & $\rm{I}_{4}^{\ast}$ & $\left(8,14\right)  $ & $\left(  4,6\right)  $ & $v(P_{R,1})=6$ & $\left(c,2\right)  $\\\cmidrule{7-8}
&  &  &  &  &  & $v(P_{R,1})\geq7$ & $\left(  c,4\right)  $\\\cmidrule{3-8}
&  & $v(a_{3})\geq3$ & $\rm{I}_{5}^{\ast}$ & $\left(  9,15\right)  $ & $\left(5,6\right)  $ & $v(P_{R,2})=7$ or $v(P_{R,3})=7$ & $\left(  c,2\right)  $\\\cmidrule{7-8}
&  &  &  &  &  & $v(P_{R,2})\geq8$ or $v(P_{R,3})\geq8$ & $\left(  c,4\right)$\\\cmidrule{2-8}
& $v(a_{1})\geq2$ & $v(a_{3})=2$ & $\rm{II}^{\ast}$ & $\left(  8,14\right)  $ & $\left(  4,6\right)  $ &  & $\left(  c,1\right)  $\\\cmidrule{2-8}
& $v(a_{1})\geq2,$ & $v(P_{R,4})=7$ & $\rm{II}$ & $\left(  10,4\right)  $ & $\left(  6,4\right)  $ &  & $\left(  c,1\right)  $\\\cmidrule{3-8}
& $v(a_{3})\geq3$ & $v(P_{R,4})\geq8,v(P_{R,5})=3$ & $\rm{III}$ & $\left(10,4\right)  $ & $\left(  6,3\right)  $ &  & $\left(  c,2\right)  $\\\cmidrule{3-8}
&  & $v(P_{R,4})\geq8,v(P_{R,5})\geq4$ & $\rm{IV}$ & $\left(  10,4\right)  $ & $\left(  6,2\right)  $ & $v(a_{3}^{2}d+4a_{6}d-64)=8$ & $\left(  c,1\right)$\\\cmidrule{7-8}
&  &  &  &  &  & $v(a_{3}^{2}d+4a_{6}d-64)\geq9$ & $\left(  c,3\right)  $\\\midrule

%%%%%%%%%%%%%%% In*####################

$\rm{I}_{n>0}^{\ast}$ & $n=1$ & $v(a_{1})=1$ & $\rm{I}_{4}^{\ast}$ & $\left(8,14\right)  $ & $\left(  3,6\right)  $ & $v(P_{R,1})=8$ & $\left(c,2\right)  $\\\cmidrule{7-8}
&  &  &  &  &  & $v(P_{R,1})\geq9$ & $\left(  c,4\right)  $\\\cmidrule{3-8}
&  & $v(a_{1})\geq2$ & $\rm{II}^{\ast}$ & $\left(  8,14\right)  $ & $\left(3,6\right)  $ &  & $\left(  c,1\right)  $\\\cmidrule{2-8}
& $n=2$ & $v(a_{1})\geq2,v(a_3)=3$ & $\rm{II}$ & $\left(  13,7\right)  $ & $\left(  7,7\right)  $ &  & $\left(  c,1\right)  $\\\cmidrule{3-8}
&  & $v(a_{1})\geq2,v(a_3)\geq4$ & $\rm{III}$ & $\left(  12,6\right)  $ & $\left(  6,5\right)  $ &  & $\left(  c,2\right)  $\\\cmidrule{2-8}
& $n=3$ & $v(a_{1})\geq2$ & $\rm{II}$ & $\left(  12,6\right)  $ & $\left(5,6\right)  $ &  & $\left(  c,1\right)  $\\\cmidrule{2-8}
& $n=4,$ & $v(P_{R,3})=4$ & $\rm{I}_{0}^{\ast}$ & $\left(  14,8\right)  $ & $\left(  6,4\right)  $ & $v(a_{1}^{2}d+4a_{2}d+48d-16)=5$ & $\left(
c,1\right)  $\\\cmidrule{7-8}
& $v(a_{1})\geq2$ &  &  &  &  & $v(a_{1}^{2}d+4a_{2}d+48d-16)\geq6$ & $\left(c,2\right)  $\\\cmidrule{3-8}
&  & $v(P_{R,3})\geq5,v(P_{R,4})=5$ & $\rm{I}_{1}^{\ast}$ & $\left(  14,8\right)  $ & $\left(  6,3\right)  $ & $v(P_{R,5})=9$ & $\left(  c,2\right)  $\\\cmidrule{7-8}
&  &  &  &  &  & $v(P_{R,5})\geq10$ & $\left(  c,4\right)  $\\\cmidrule{3-8}
&  & $v(P_{R,3})\geq5,v(P_{R,4})\geq6$ & $\rm{IV}^{\ast}$ & $\left(  14,8\right)  $
& $\left(  6,2\right)  $ & $v(P_{R,5})=9$ & $\left(  c,1\right)  $\\\cmidrule{7-8}
&  &  &  &  &  & $v(P_{R,5})\geq10$ & $\left(  c,3\right)  $\\\cmidrule{2-8}
& $n=5$ & $v(a_{1})\geq2$ & $\rm{I}_{0}^{\ast}$ & $\left(  15,9\right)  $ & $\left(  6,5\right)  $ & $v(a_{1}^{2}+4a_{2}-4d)=4$ & $\left(  c,1\right)  $\\\cmidrule{7-8}
&  &  &  &  &  & $v(a_{1}^{2}+4a_{2}-4d)\geq5$ & $\left(  c,2\right)  $\\\cmidrule{2-8}
& $n=6$ & $v(a_{1})\geq2,v(P_{R,6})\geq5$ & $\rm{III}^{\ast}$ & $\left(16,10\right)  $ & $\left(  6,3\right)  $ &  & $\left(  c,2\right)  $\\\cmidrule{2-8}
& $n=7$ & $v(a_{1})\geq2,v(P_{R,6})\geq5$ & $\rm{II}^{\ast}$ & $\left(17,11\right)  $ & $\left(  6,3\right)  $ &  & $\left(  c,1\right)  $\\\cmidrule{2-8}
& $n=8$ & $v(a_{1})\geq2,v(P_{R,6})\geq5$ & $\rm{I}_{0}$ & $\left(  18,0\right)  $
& $\left(  6,0\right)  $ &  & $\left(  c,1\right)  $\\\cmidrule{2-8}
& $n\geq2$ & $v(a_{1})=1$ & $\rm{I}_{n+4}^{\ast}$ & \multicolumn{2}{l}{$\left(n+8,n+14\right)  \ \ \ \left(  4,6\right)  $} & $v(n)=0:\ v(P_{R,8})=n+8$ & $\left(  c,2\right)  $\\
&  &  &  &  &  & or $v(P_{R,9})=n+8$ & \\\cmidrule{7-7}
&  &  &  & && $v(n)\geq1:\ v(P_{R,1})=n+8$ & \\
&  &  &  &&& or $v(P_{R,7})=n+8$ & \\\cmidrule{7-8}

& & &  & & & $v(n)=0:\ v(P_{R,8})\geq n+9$ & $\left(  c,4\right)  $\\
&  &  &  &  &  & or $v(P_{R,9})\geq n+9$ & \\\cmidrule{7-7}
&  &  &  & && $v(n)\geq1:\ v(P_{R,1})\geq n+9$ & \\
&  &  &  &&& or $v(P_{R,7})\geq n+9$ & \\\cmidrule{2-8}
& $n\geq6$ & $v(a_{1})\geq2,v(P_{R,6})=4$ & $\rm{I}_{n-4}^{\ast}$ & \multicolumn{2}{l}{$\left(  n+10,n+4\right)  \ \ \ \ \left(  6,4\right)  $} & $v(n)=0:$ $v(P_{R,10})=n+9$ & $\left(  c,2\right)  $\\
&  &  &  &  &  & $v(n)\geq1:\ v(P_{R,2})=n+4$ & \\\cmidrule{7-8}
&  &  &  &  &  & $v(n)=0:$ $v(P_{R,10})\geq n+10$ & $\left(  c,4\right)  $\\
&  &  &  &  &  & $v(n)\geq1:\ v(P_{R,2})\geq n+5$ & \\\cmidrule{2-8}
& $n\geq9$ & $v(a_{1})\geq2,v(P_{R,6})\geq5$ & $\rm{I}_{n-8}$ & \multicolumn{2}{l}{$\left(  n+10,n-8\right)  \ \ \ \left(  6,1\right)  $} & $v(a_1^2 d +4a_2d-16)=6$ & $\left(  c,n^{\prime}\right)  $\\\cmidrule{7-8}
&  &  &  &  &  & $v(a_1^2 d +4a_2d-16)\geq7$ & $\left(  c,n-8\right)  $\\\midrule

%%%%%%%%%%%%%%% IV*####################

$\rm{IV}^{\ast}$ & $v(a_{1})=1$ &  & $\rm{I}_{4}^{\ast}$ & $\left(  8,14\right)  $ & $\left(  2,6\right)  $ & $v(P_{R,1})=6$ & $\left(  c,2\right)  $\\\cmidrule{7-8}
&  &  &  &  &  & $v(P_{R,1})\geq7$ & $\left(  c,4\right)  $\\\cmidrule{2-8}
& $v(a_{2})\geq2$ &  & $\rm{II}^{\ast}$ & $\left(  8,14\right)  $ & $\left(2,6\right)  $ &  & $\left(  c,1\right)  $\\\midrule

%%%%%%%%%%%%%%% III*####################

$\rm{III}^{\ast}$ & $v(a_{1})=1$ &  & $\rm{I}_{6}^{\ast}$ & $\left(  10,16\right)  $ & $\left(  3,6\right)  $ & $v(P_{R,1})=10$ or $v(P_{R,2})=10$ & $\left(2,2\right)  $\\\cmidrule{7-8}
&  &  &  &  &  & $v(P_{R,1})\geq11$ or $v(P_{R,2})\geq11$ & $\left(2,4\right)  $\\\cmidrule{2-8}
& $v(a_{1})\geq2$ & $v(a_{3})=3$ & $\rm{II}$ & $\left(  12,6\right)  $ & $\left(5,6\right)  $ &  & $\left(  2,1\right)  $\\\cmidrule{3-8}
&  & $v(a_{3})\geq4$ & $\rm{III}$ & $\left(  14,8\right)  $ & $\left(  15,9\right)$ &  & $\left(  2,2\right)  $\\\midrule

%%%%%%%%%%%%%%% II*####################

$\rm{II}^{\ast}$ & $v(a_{1})=1$ &  & $\rm{I}_{7}^{\ast}$ & $\left(  11,17\right)  $ & $\left(  3,6\right)  $ & $v(P_{R,1})=11$ or $v(P_{R,2})=11$ & $\left(1,2\right)  $\\\cmidrule{7-8}
&  &  &  &  &  & $v(P_{R,1})\geq12$ or $v(P_{R,2})\geq12$ & $\left(1,4\right)  $\\\cmidrule{2-8}
& $v(a_{1})\geq2$ & $v(a_{3})=3$ & $\rm{II}$ & $\left(  12,6\right)  $ & $\left(4,6\right)  $ &  & $\left(  1,1\right)  $\\\cmidrule{2-8}
& $v(a_{1})\geq2,$ & $v(P_{R,3})=12$ & $\rm{I}_{0}^{\ast}$ & $\left(  14,8\right)$ & $\left(  6,4\right)  $ & $v(P_{R,4})=4$ & $\left(  1,1\right)  $\\\cmidrule{7-8}
& $v(a_{3})\ge 4$ &  &  &  &  & $v(P_{R,4})\geq5$ & $\left(  1,2\right)  $\\\cmidrule{3-8}
&  & $v(P_{R,3})\geq13,v(P_{R,4})=4$ & $\rm{I}_{1}^{\ast}$ & $\left(  14,8\right)
$ & $\left(  6,3\right)  $ & $v(P_{R,3})=13$ & $\left(  c,2\right)  $\\\cmidrule{7-8}
&  &  &  &  &  & $v(P_{R,3})\geq14$ & $\left(  c,4\right)  $\\\cmidrule{3-8}
&  & $v(P_{R,3})\geq13,v(P_{R,4})\geq5$ & $\rm{IV}^{\ast}$ & $\left(  14,8\right)
$ & $\left(  6,2\right)  $ & $v(P_{R,3})=13$ & $\left(  c,2\right)  $\\\cmidrule{7-8}
&  &  &  &  &  & $v(P_{R,3})\geq14$ & $\left(  c,4\right)$
\end{longtable}
\endgroup}

\subsection{The case when \texorpdfstring{$v(d)=0$}{v(d)=0}}\label{sec5_1}

\begin{proof}[Proof of Theorem~\ref{thmQ2combined} for when $v(d)=0$]
Suppose an elliptic curve over $\Q_2$ is given by a \KNT model~$E$, and denote its Weierstrass coefficients by $a_{i}$. Then, we can take the quadratic twist $E^{d}$ of $E$ by $d$ to given by the Weierstrass model (\ref{Edmodel}). For a fixed $R=\operatorname*{typ}(E)$, let $F_{R,j}^{0}$ be the elliptic curve obtained from $E^{d}$ via the $\mathbb{Q}_{2}$-isomorphism $[u_{j},r_{j},s_{j},w_{j}]$, where $u_{j},r_{j},s_{j},w_{j}$ are as given in Table \ref{FRj}. In addition, for each $R$, we set $F_{R,0}^0$ to be the elliptic curve attained from $E^d$ via the $\mathbb{Q}_{2}$-isomorphism $[u_{0},r_{0},s_{0},w_{0}]$. For each of the conditions listed in Table~\ref{tab:localdata-dodd}, we associate a model $F_{R,j}^{0}$ as listed in Table~\ref{doddmodels}. In the SageMath \cite{sagemath} accompaniment to this proof \cite[Theorem5\_1.ipynb]{gittwists}, we verify that $\mathcal{V}(F_{R,j}^{0})$ is as given in the table. This allows us to conclude that $R^{d}=\operatorname*{typ}(E^{d})$ is as claimed by Proposition~\ref{lem:pnormal} since $E^{d}$ is $\mathbb{Q}_{2}$-isomorphic to $F_{R,j}^{0}$. This demonstrates that the pair $\left(  R,R^{d}\right)  $ is as given in Table~\ref{tab:localdata-dodd}.

{\begingroup \footnotesize
\renewcommand{\arraystretch}{1.5}
 \begin{longtable}{ccccccc}
 	\caption{Conditions to determine $\mathcal{V}(F_{R,j}^0)$}\\
	\hline
$R$ & \multicolumn{3}{c}{Conditions} & $j$ & $\mathcal{V}(F_{R,j}%
^{0})$ & $\operatorname*{typ}(F_{R,j}^{0})$   \\
	\hline
	\endfirsthead
	\hline
$R$ & \multicolumn{3}{c}{Conditions} & $j$ & $\mathcal{V}(F_{R,j}%
^{0})$ & $\operatorname*{typ}(F_{R,j}^{0})$  \\
	\hline
	\endhead
	\hline 
	\multicolumn{4}{r}{\emph{continued on next page}}
	\endfoot
	\hline 
	\endlastfoot
	
$\rm{I}_{0}$ & $d\equiv1\ \operatorname{mod}4$ &  \multicolumn{2}{l}{$v(a_1)=0$ and exactly of $v(a_{4}),v(a_{6})\geq1$}  & $1$
& $\left(  =0,0,\infty,0,0\right)  $ & $\rm{I}_{0}$ \\\cmidrule{3-7}
&  & $v(a_{1})\geq1$ &  & $2$ & $\left(  1,1,=0,0,0\right)  $ & $\rm{I}_{0}$\\\cmidrule{2-7}
& $d\equiv3\ \operatorname{mod}4$ & $v(a_{1})=0$ & $v(a_{4}%
)=0,v(a_{6})\geq1$ & $3$ & $\left(  =1,=1,\infty,=4,7\right)  $ &
$\rm{I}_{4}^{\ast}$ \\\cmidrule{4-7}
&  &  & $v(a_{4})\geq1,v(a_{6})=0$ & $4$ & $\left(
=1,=1,=4,=4,7\right)  $ & $\rm{I}_{4}^{\ast}$\\\cmidrule{3-7}
&  & $v(a_{1})\ge 1$ &  & $5$ & $\left(  \infty,2,=3,4,=5\right)  $ &
$\rm{II}^{\ast}$\\\hline
%%%%%%% In %%%%%%%%%%%%

$\rm{I}_{n>0}$  & $d\equiv1\ \operatorname{mod}4$ & $v(n)=0$  &  & $0$ & $\left(=0,0,=\frac{n+1}{2},\frac{n+1}{2},=n\right)  $ & $\rm{I}_{n}$\\\cmidrule{3-7}
& & $v(n)\geq1$ &  & $0$ & $\left(=0,0,\frac{n+2}{2},=\frac{n}{2},n+1\right)  $ & $\rm{I}_{n}$\\\cmidrule{2-7}

&  $d\equiv3\ \operatorname{mod}4$ & $v(n) =0$  & & $1$ & $\left(  =1,=1,=\frac{n+7}{2},\frac{n+9}{2},n+7\right)  $ & $\rm{I}_{n+4}^{\ast}$\\\cmidrule{3-7}
&  & $v(n)\ge 1$ &  & $1$ & $\left(  =1,=1,\frac{n+8}{2},=\frac{n+8}{2},n+7\right)  $ & $\rm{I}_{n+4}^{\ast}$\\\hline

%%%%%%% II %%%%%%%%%%%%
$\rm{II}$ & $v(a_{3})\geq2$ &  &  & $0$ & $\left(  1,1,2,1,=1\right)  $ &
$\rm{II}$ \\\cmidrule{2-7}
& $v(a_{3})=1$ & $d\equiv1\ \operatorname{mod}4$ &  & $0$ & $\left(
1,1,=1,1,=1\right)  $ & $\rm{II}$\\\cmidrule{2-7}
&  & $d\equiv3\ \operatorname{mod}4$ & $v(a_{4})=1$ & $0$ & $\left(
1,1,=1,=1,2\right)  $ & $\rm{III}$\\\cmidrule{4-7}
&  &  & $v(a_{4})\geq2$ & $0$ & $\left(  1,1,=1,2,2\right)  $ & $\rm{IV}$\\\hline

$\rm{III}$ & $d\equiv1\ \operatorname{mod}4$ &  &  & $0$ & $\left(
1,1,1,=1,2\right)  $ & $\rm{III}$\\\cmidrule{2-7}
& $d\equiv3\ \operatorname{mod}4$ & $v(a_{3})=1$ &  & $0$ & $\left(
1,1,=1,=1,=1\right)  $ & $\rm{II}$\\\cmidrule{3-7}
&  & $v(a_{3})\geq2$ &  & $0$ & $\left(  1,1,2,=1,2\right)  $ & $\rm{III}$\\\hline

$\rm{IV}$ & $d\equiv3\ \operatorname{mod}4$ &  &  & $0$ & $\left(
1,1,=1,2,=1\right)  $ & $\rm{II}$\\\cmidrule{2-7}
& $d\equiv1\ \operatorname{mod}4$ &  &  & $0$ & $\left(  1,1,=1,2,2\right)  $
& $\rm{IV}$\\\hline

$\rm{I}_{0}^{\ast}$ & $d\equiv1\ \operatorname{mod}4$ &  &  & $0$ & $\left(
1,1,2,3,=3\right)  $ & $\rm{I}_{0}^{\ast}$\\\cmidrule{2-7}
& $d\equiv3\ \operatorname{mod}4$ & $v(a_{3})\geq3$ &  & $0$ & $\left(
1,1,3,3,=3\right)  $ & $\rm{I}_{0}^{\ast}$\\\cmidrule{3-7}
&  & $v(a_{3})=2$ & $v(a_{1}+a_{2})=1$ & $0$ & $\left(
1,=1,=2,3,4\right)  $ & $\rm{I}_{1}^{\ast}$\\\cmidrule{4-7}
&  &  & $v(a_{1}+a_{2})\geq2$ & $0$ & $\left(  1,2,=2,3,4\right)  $ &
$\rm{IV}^{\ast}$\\\hline

$\rm{I}_{n>0}^{\ast}$ & $d\equiv1\ \operatorname{mod}4$ & $v(n)=0$ &  & $0$ & $\left(  1,=1,=\frac{n+3}{2},\frac{n+5}{2},n+3\right)  $ & $\rm{I}_{n}^{\ast}$\\\cmidrule{3-7}
&  & $v(n)\geq1$ &  & $0$ & $\left(  1,=1,\frac{n+4}{2},=\frac{n+4}{2},n+3\right)  $ & $\rm{I}_{n}^{\ast}$\\\cmidrule{2-7}
& $d\equiv3\ \operatorname{mod}4$ & $n=1$ &  & $0$ & $\left(1,1,=2,3,=3\right)  $ & $\rm{I}_{0}^{\ast}$\\\cmidrule{3-7}
&  & $n=2$ & $v(a_{1})=1$ & $0$ & $\left(  =1,2,3,=3,6\right)  $ & $\rm{III}^{\ast}$\\\cmidrule{3-7}
&  & $v(n)\geq1$ & $v(a_{1})\geq2$ & $0$ & $\left(  2,=1,\frac{n+4}{2},\frac{n+4}{2},n+3\right)  $ & $\rm{I}_{n}^{\ast}$\\\cmidrule{3-7}
&  & $n=3$ & $v(a_{1})=1$ & $0$ & $\left(  =1,2,=3,4,=5\right)  $ & $\rm{II}^{\ast}$ \\\cmidrule{3-7}
&  & $n\geq3$ odd & $v(a_{1})\geq2$ & $1$ & $\left( \infty,=1,=\frac{n+3}{2},\frac{n+5}{2},n+3\right)  $ & $\rm{I}_{n}^{\ast}$\\\cmidrule{3-7}
&  & $n=4$ & $v(a_{1})=1$ & $2$ & $\left(  =0,0,\infty,0,0|v(a_{4}^{d}+a_{6}^{d})=0\right)  $ & $\rm{I}_{0}$\\\cmidrule{3-7}
&  & $n\geq5$ odd & $v(a_{1})=1$ & $3$ & $\left(  =0,0,=\frac{n-3}{2},\frac{n-3}{2},=n-4\right)  $ & $\rm{I}_{n-4}$\\\cmidrule{3-7}
&  & $n\geq6$ even & $v(a_{1})=1$ & $3$ & $\left(  =0,0,\frac{n-2}{2},=\frac{n-4}{2},n-3\right)  $ & $\rm{I}_{n-4}$\\\hline

$\rm{IV}^{\ast}$ & $d\equiv1\ \operatorname{mod}4$ &  &  & $0$ & $\left(
1,2,=2,3,4\right)  $ & $\rm{IV}^{\ast}$\\\cmidrule{2-7}
& $d\equiv3\ \operatorname{mod}4$ &  &  & $0$ & $\left(  1,1,=2,3,=3\right)  $
& $\rm{I}_{0}^{\ast}$\\\hline

$\rm{III}^{\ast}$ & \multicolumn{2}{l}{$v(a_{1})\geq2$ or $d\equiv
1\ \operatorname{mod}4$} &  & $0$ & $\left(  2,2,3,=3,5\right)  $ &
$\rm{III}^{\ast}$\\\cmidrule{2-7}
& $v(a_{1})=1$ & $d\equiv3\ \operatorname{mod}4$ &  & $0$ & $\left(
=1,=1,3,=3,5\right)  $ & $\rm{I}_{2}^{\ast}$\\\hline

$\rm{II}^{\ast}$ & $d\equiv1\ \operatorname{mod}4$ &  &  & $0$ & $\left(
1,2,3,4,=5\right)  $ & $\rm{II}^{\ast}$\\\cmidrule{2-7}
& $d\equiv3\ \operatorname{mod}4$ & $v(a_{1})=1$ & $v(a_{3})=3$ & $0$
& $\left(  =1,=1,=3,4,6\right)  $ & $\rm{I}_{3}^{\ast}$\\\cmidrule{4-7}
&  &  & $v(a_{3})\geq4$ & $1$ & $\left(  =1,=1,=3,4,6\right)  $ &
$\rm{I}_{3}^{\ast}$\\\cmidrule{3-7}
&  & $v(a_{1})\geq2$ & $v(a_{3})=3$ & $2$ & $\left(
\infty,2,=0,0,0\right)  $ & $\rm{I}_{0}$\\\cmidrule{4-7}
&  &  & $v(a_{3})\geq4$ & $0$ & $\left(  2,2,4,4,=5\right)  $ & $\rm{II}^*$

\label{doddmodels}
\end{longtable}
\endgroup}

It remains to show that the pairs $\left(  \delta,\delta^{d}\right)  ,$ $\left(  f,f^{d}\right)  $, and $\left(  c,c^{d}\right)  $ are as given in Table \ref{tab:localdata-dodd}. By Lemma~\ref{Lemmadisccond}, we can uniquely determine $\delta$ and $f$ from $\mathcal{V}(E)$. Consequently, a case-by-case analysis of the conditions listed for each $R$ shows that $\delta$ and $f$ are as claimed in Table \ref{tab:localdata-dodd}. Since $d$ is odd, $\delta \equiv\delta^{d}$ $\operatorname{mod}12$. Next, since $R^{d}$ has been determined, Table \ref{tab:discriminantconductor} gives the possible pairs $\left(  \delta^{d},f^{d}\right)  $ for a fixed N\'{e}ron-Kodaira type. By inspection, the listed $\delta^{d}$ for a fixed N\'{e}ron-Kodaira type in the table are in unique congruence classes modulo $12$. Consequently, the knowledge of $R^{d}$ and the fact that $\delta\equiv\delta^{d}$ allows us to determine the exact value of $\delta^{d}$. This also gives the value of $f^{d}$. This case-by-case analysis shows that the pairs $\left(\delta,\delta^{d}\right)  $ and $\left(  f,f^{d}\right)  $ are as claimed in Table~\ref{tab:localdata-dodd}.

Since Proposition \ref{lem:pnormal} determines $c$, it remains to show that $c^{d}$ is as claimed. Observe that $c^d$ is uniquely determined when $R^d \in \left\{  \rm{I}_{0},II,III,II^{\ast},III^{\ast}\right\} $. Below, we consider the cases corresponding to $R^{d}\not \in \left\{  \rm{I}_{0},II,III,II^{\ast},III^{\ast}\right\}  $. Specifically, we split into cases corresponding to those $R$'s that lead to one of these $R^{d}$. We then consider the assumptions on the $a_{i}$'s which result in $R^{d}$ and the corresponding Weierstrass model $F_{R,j}^{0}$ determined by these assumptions as given in Table~\ref{doddmodels}. We denote the Weierstrass coefficients of $F_{R,j}^{0}$ by $a_{i}^{d}$. We note that this proof also makes reference to polynomials $P_{R,i}^0$ given in Table~\ref{tab:PRj}. Since $v(d)=0$ throughout this proof, we will write $F_{R,j}=F_{R,j}^0$ and $P_{R,i}=P_{R,i}^0$. 

\textbf{Case 1.} Let $R=\rm{I}_{0}$ and suppose that $d\equiv3\ \operatorname{mod}4$ with $v(a_{1})=0$. Then one of the following holds: $\left(  i\right)  $ $v(a_{4})=0$ and $v(a_{6})\geq1$ or $\left(  ii\right)  $ $v(a_{4})\geq1$ and $v(a_{6})=0$. From Table~\ref{doddmodels}, we have that $F_{R,3}$ (resp. $F_{R,4}$) is a \KNT model if $\left(  i\right)  $ (resp. $\left(ii\right)  $) holds. Moreover, $\operatorname*{typ}(E^{d})=\rm{I}_{4}^{\ast}$. By Corollary~\ref{TamagawaQ2}, $c^{d}=2$ (resp. $4$) if $v(a_{6}^{d})=7$ (resp. $v(a_{6}^{d})\geq8$). We now consider the subcases $\left(  i\right)  $ and $\left(  ii\right)  $ separately.

\qquad\textbf{Subcase 1a.} Suppose $v(a_{4})=0$ and $v(a_{6})\geq1$. Then $F_{R,3}$ is given by a \KNT model with $a_{6}^{d}=16d^{3}\left(  a_{3}^{2}+4a_{6}\right)  $. Now observe that
\[
\frac{a_{6}^{d}}{64d^{3}}=\left(  \frac{a_{3}}{2}\right)  ^{2}+a_{6}\equiv
a_{6}\ \operatorname{mod}4.
\]
It follows that $c^{d}=2$ (resp. $4$) if $a_{6}\equiv2\ \operatorname{mod}4$ (resp. $0\ \operatorname{mod}4$).

\qquad\textbf{Subcase 1b.} Suppose $v(a_{4})\geq1$ and $v(a_{6})=0$. Then $F_{R,4}$ is given by a \KNT model with $a_{6}^{d}=16\left(  a_{3}^{2}d^{3}+4a_{6}d^{3}-4\right)  $. Since
\[
\frac{a_{6}^{d}}{64}=\left(  \frac{a_{3}}{2}\right)  ^{2}d^{3}+a_{6}d^{3}-1\equiv3a_{6}-1\ \operatorname{mod}4=\left\{
\begin{array}
[c]{cl}
2\ \operatorname{mod}4 & \text{if }a_{6}\equiv1\ \operatorname{mod}4,\\
0\ \operatorname{mod}4 & \text{if }a_{6}\equiv3\ \operatorname{mod}4,
\end{array}
\right.
\]
it follows that $c^{d}=2$ (resp. $4$) if $a_{6}\equiv1\ \operatorname{mod}4$ (resp. $3\ \operatorname{mod}4$).

\textbf{Case 2.} Let $R=\operatorname*{typ}(E)=\rm{I}_{n>0}$. Then
\[
\mathcal{V}(E)=\left\{
\begin{array}
[c]{cl}
\left(  =0,0,=\frac{n+1}{2},\frac{n+1}{2},=n\right)   & \text{if }n\text{ is
odd,}\\
\left(  =0,0,\frac{n+2}{2},=\frac{n}{2},n+1\right)   & \text{if }n\text{ is
even.}
\end{array}
\right.
\]
From Table \ref{doddmodels}, we have that $F_{R,0}$ (resp. $F_{R,1}$) is a
\KNT model and $\operatorname{typ}(E^d)=\rm{I}_n$ (resp. $\rm{I}_{n+4}^*)$ if $d\equiv1\ \operatorname{mod}4$ (resp. $3\ \operatorname{mod}%
4$). We now consider these subcases separately.

\qquad\textbf{Subcase 2a.} Suppose $d\equiv1\ \operatorname{mod}4$. Then
$F_{R,0}$ is a \KNT model, and by Corollary~\ref{TamagawaQ2}, $c^{d}$ is determined by whether $v(a_{2}^{d})$ is zero or positive. Now observe that
\[
4a_{2}^{d}=a_{1}^{2}(d-1)+4a_{2}d\equiv d-1+4a_{2}\ \operatorname{mod}8.
\]
It follows that%
\[
c^{d}=\left\{
\begin{array}
[c]{cl}%
2-(n\ \operatorname{mod}2) & \text{if }v(d-1+4a_2)=2,\\
n & \text{if }v(d-1+4a_2)\ge 3.
\end{array}
\right.
\]

\qquad\textbf{Subcase 2b.} Suppose $d\equiv3\ \operatorname{mod}4$. Then
$F_{R,1}$ is a \KNT model, with Weierstrass coefficient $a_{6}^{d}=16\left(  a_{3}^{2}
(d^{3}-1)+4a_{6}d^{3}\right)  $. We now consider the cases when $n$ is even or
odd. If $n$ is odd, we have that%
\begin{align*}
\frac{a_{6}^{d}}{2^{n+5}} &  =\frac{\left(  a_{3}^{2}(d^{3}-1)+4a_{6}%
d^{3}\right)  }{2^{n+1}}\\
&  =\left(  \frac{a_{3}}{2^{\frac{n+1}{2}}}\right)  ^{2}\left(  d^{3}%
-1\right)  +2\left(  \frac{a_{6}}{2^{n}}\right)  d^{3}\\
&  \equiv d-1+2\left(  \frac{a_{6}}{2^{n}}\right)  d\ \operatorname{mod}8.
\end{align*}
Since $v(a_{6}^{d})=n+7$ is equivalent to $\frac{a_{6}^{d}}{2^{n+5}}%
\equiv4\ \operatorname{mod}8$, we conclude by Corollary \ref{TamagawaQ2} that%
\[
c^{d}=\left\{
\begin{array}
[c]{cl}%
2 & \text{if }v(2^{n-1}(d-1)+a_{6}d)=n+1,\\
4 & \text{if }v(2^{n-1}(d-1)+a_{6}d)\geq n+2.
\end{array}
\right.
\]
Now suppose that $n$ is even, and observe that%
\begin{align*}
\frac{a_{6}^{d}}{2^{n+6}}  & =\frac{\left(  a_{3}^{2}(d^{3}-1)+4a_{6}%
d^{3}\right)  }{2^{n+2}}\\
& =\left(  \frac{a_{3}}{2^{\frac{n+2}{2}}}\right)  ^{2}\left(  d^{3}-1\right)
+2\left(  \frac{a_{6}}{2^{n+1}}\right)  d^{3}\\
& \equiv\frac{a_{3}^{2}}{2^{n+1}}+2\left(  \frac{a_{6}}{2^{n+1}}\right)
\ \operatorname{mod}4.
\end{align*}
It follows from Corollary \ref{TamagawaQ2} that%
\[
c^{d}=\left\{
\begin{array}
[c]{cl}%
2 & \text{if }v(a_{3}^{2}+2a_{6})=n+2,\\
4 & \text{if }v(a_{3}^{2}+2a_{6})\geq n+3.
\end{array}
\right.
\]

\textbf{Case 3.} Let $R=\rm{II}$ and suppose that $\mathcal{V}(E)=\left(  1,1,=1,2,=1\right)  $. From Table~\ref{doddmodels}, we have that $F_{R,0}$ is a \KNT model for $E^{d}$ and $R^{d}=\rm{IV}$. By Corollary~\ref{TamagawaQ2}, $c^{d}$ is determined from the Weierstrass coefficient $a_{6}^{d}$ of $F_{R,0}$. In particular, $c^{d}=1$ (resp. $3$) if $v(a_{6}^{d})=2$ (resp. $\geq3$). The condition for $c_2^d$ as given in Table~\ref{tab:localdata-dodd} now follows since
\begin{equation}\label{eq:forII)star-to-IV}
a_{6}^d  = \frac{a_3^2(d^3-1)+4a_6d^3}{4} = \left(\frac{a_3}{2}\right)^2(d^3-1)+a_6d^3\equiv (d-1)+a_6d \equiv d+a_6d-1 \mod 8.
\end{equation}

\textbf{Case 4.} Let $R=\rm{IV}$ and suppose that $\mathcal{V}(E)=\left(  1,1,=1,2,2\right)  $ with $d\equiv1\ \operatorname{mod}4$. From Table \ref{doddmodels}, we have that $F_{R,0}$ is a \KNT model for $E^{d}$ and $R^{d}=\rm{IV}$. By Corollary~\ref{TamagawaQ2}, $c^{d}=1$ (resp. $3$) if $v(a_{6}^{d})=2$ (resp. $\geq3$). Since $a_6^d$ also satisfies the equality and congruence given in \eqref{eq:forII)star-to-IV}, we conclude that $c^d$ is as claimed.

\textbf{Case 5.} Let $R=\rm{I}_{0}^{\ast}$ and suppose that $\mathcal{V}(E)=\left(1,1,2,3,=3\right)  $. Below, we consider the four cases listed in Table~\ref{doddmodels}. We note that in each case, $F_{R,0}$ is a \KNT model for $E^{d}$. The Weierstrass coefficient $a_{2}^{d}$ satisfies
\begin{equation}
a_{2}^{d}=\left(  \frac{a_{1}}{2}\right)  ^{2}\left(  d-1\right)+a_{2}d\equiv\left\{
\begin{array}
[c]{cl}
a_{2}\ \operatorname{mod}4 & \text{if }d\equiv1\ \operatorname{mod}4,\\
a_{1}+a_{2}\ \operatorname{mod}4 & \text{if }d\equiv3\ \operatorname{mod}4.
\end{array}
\right.  \label{I0stardoddvalsa2d}
\end{equation}
We now proceed by cases.

\qquad\textbf{Case 5a.} Suppose $d\equiv1\ \operatorname{mod}4$ or $d\equiv3\ \operatorname{mod}4$ with $v(a_{3})\geq3$. From Table~\ref{doddmodels}, we have that $F_{R,0}$ is a \KNT model for $E^{d}$ and $R^{d}=\rm{I}_{0}^{\ast}$. By Corollary~\ref{TamagawaQ2}, $c^{d}=1$ (resp. $2$) if $v(a_{2}^{d})=1$ (resp. $\geq2$). If $d\equiv1\ \operatorname{mod}4$, then the result follows since (\ref{I0stardoddvalsa2d}) implies that $v(a_{2}^{d})\geq2$ if and only if $v(a_{2})\geq2$. Thus, $c^{d}=1$ (resp. $2$) if $v(a_{2})=1$ (resp. $\geq2$).

Similarly, if $d\equiv3\ \operatorname{mod}4$ with $v(a_{3})\geq3$, then (\ref{I0stardoddvalsa2d}) shows that $v(a_{2}^{d})\geq2$ if and only if $v(a_{1}+a_{2})\geq2$. Consequently,
\[
c^{d}=\left\{
\begin{array}
[c]{cll}
1 & \text{if }  v(a_{1}+a_2)=1,\\
2 & \text{if\ } \ v(a_{1}+a_2) \ge 2.
\end{array}
\right.
\]

\qquad\textbf{Case 5b.} Suppose that $d\equiv3\ \operatorname{mod}4$ and $v(a_{3})=2$ with $v(a_{1}+a_{2})=1$ (resp. $\geq2$). From Table~\ref{doddmodels}, we have that $F_{R,0}$ is a \KNT model for $E^{d}$ and $R^{d}=\rm{I}_{1}^{\ast}$ (resp. $\rm{IV}^{\ast}$). If $v(a_{1}+a_{2})=1$, then Corollary~\ref{TamagawaQ2} implies
\[
c^{d}=\left\{
\begin{array}
[c]{cl}
2 & \text{if }v(a_{6}^{d})=4,\\
4 & \text{if }v(a_{6}^{d})\geq5.
\end{array}
\right.
\]
Similarly, if $v(a_{1}+a_{2})\geq2$, then Corollary~\ref{TamagawaQ2} implies
\[
c^{d}=\left\{
\begin{array}
[c]{cl}%
1 & \text{if }v(a_{6}^{d})=4,\\
3 & \text{if }v(a_{6}^{d})\geq5.
\end{array}
\right.
\]
Next, we recall that in both cases, $v(a_{3})=2$, $v(a_{6})=3$, and $d\equiv3\ \operatorname{mod}4$. The quantity $a_{6}^{d}$ only depends on $a_{3},a_{6}$, and $d$. Thus, it suffices to determine conditions for when $v(a_{6}^{d})$ is $4$ or greater than $4$. To this end, observe that
\[
a_{6}^{d}=4\left(  \left(  \frac{a_{3}}{4}\right)  ^{2}\left(  d^{3}-1\right) +\frac{a_{6}d}{4}\right)  .
\]
Since
\[
\left(  \frac{a_{3}}{4}\right)  ^{2}\left(  d^{3}-1\right)  +\frac{a_{6}d}%
{4}\equiv\left(  d-1\right)  +\frac{a_{6}d}{4}\ \operatorname{mod}8,
\]
we conclude that
\[
v(a_{6}^{d})=\left\{
\begin{array}
[c]{cl}
4 & \text{if }v(4(d-1)+a_{6}d)=4,\\
\geq5 & \text{if }v(4(d-1)+a_{6}d)\geq5.
\end{array}
\right.
\]
Thus, the local Tamagawa number $c^{d}$ is as claimed.

\textbf{Case 6.} Let $R=\rm{I}^*_{n>0}$. Since $E$ is given by a \KNT model, we have that
\[
\mathcal{V}(E)=\left\{
\begin{array}
[c]{cl}%
\left(  1,=1,=\frac{n+3}{2},\frac{n+5}{2},n+3\right)  & \text{if }n\text{ is
odd,}\\
\left(  1,=1,\frac{n+4}{2},=\frac{n+4}{2},n+3\right)  & \text{if }n\text{ is
even.}%
\end{array}
\right.
\]

We now proceed by cases, following the order in Table~\ref{doddmodels}.

\qquad\textbf{Subcase 6a.} Suppose that $d\equiv1\ \operatorname{mod}4$ with
$v(n)=0$. By Table~\ref{doddmodels}, $F_{R,0}$ is a \KNT model for $E^{d}$
and $\operatorname*{typ}(E^{d})=\rm{I}_{n}^{\ast}$. Now observe that
\begin{align*}
a_{6}^{d} &  =\frac{1}{4}\left(  a_{3}^{2}\left(  d^{3}-1\right)  +4a_{6}%
d^{3}\right)  \\
&  =2^{n+1}\left(  \left(  \frac{a_{3}}{2^{\frac{n+3}{2}}}\right)  ^{2}\left(
d^{3}-1\right)  +\frac{a_{6}}{2^{n+1}}d^{3}\right)  .
\end{align*}
Since $v\!\left(  \frac{a_{6}}{2^{n+1}}\right)  \geq2$, we have that%
\[
\left(  \frac{a_{3}}{2^{\frac{n+3}{2}}}\right)  ^{2}\left(  d^{3}-1\right)
+\frac{a_{6}}{2^{n+1}}d^{3}\equiv d-1+\frac{a_{6}}{2^{n+1}}%
\ \operatorname{mod}8.
\]
By Corollary \ref{TamagawaQ2}, we conclude that%
\[
c^{d}=\left\{
\begin{array}
[c]{cl}%
2 & \text{if }v(2^{n+1}(d-1)+a_{6})=n+3,\\
4 & \text{if }v(2^{n+1}(d-1)+a_{6})\geq n+4.
\end{array}
\right.
\]

\qquad\textbf{Subcase 6b.} Suppose that $d\equiv1\ \operatorname{mod}4$ with
$v(n)\geq1$. By Table \ref{doddmodels}, $F_{R,0}$ is a \KNT model for
$E^{d}$ and $\operatorname*{typ}(E^{d})=\rm{I}_{n}^{\ast}$. Now observe that%
\begin{align}
a_{6}^{d} &  =\frac{1}{4}\left(  a_{3}^{2}\left(  d^{3}-1\right)  +4a_{6}%
d^{3}\right)  \nonumber\\
&  =2^{n+2}\left(  \left(  \frac{a_{3}}{2^{\frac{n+4}{2}}}\right)  ^{2}\left(
d^{3}-1\right)  +\frac{a_{6}}{2^{n+2}}d^{3}\right)  .\label{Instar6b}
\end{align}
Since%
\[
\left(  \frac{a_{3}}{2^{\frac{n+4}{2}}}\right)  ^{2}\left(  d^{3}-1\right)
+\frac{a_{6}}{2^{n+2}}d^{3}\equiv\frac{a_{6}}{2^{n+2}}\ \operatorname{mod}4,
\]
Corollary \ref{TamagawaQ2} implies tha%
\[
c^{d}=\left\{
\begin{array}
[c]{cl}%
2 & \text{if }v(a_{6})=n+3,\\
4 & \text{if }v(a_{6})\geq n+4.
\end{array}
\right.
\]

\qquad\textbf{Subcase 6c.} Suppose that $d\equiv3\ \operatorname{mod}4$ with
$n=1$. In particular, $\mathcal{V}(E)=\left(  1,=1,=2,3,4\right)  $. By Table
\ref{doddmodels}, $F_{R,0}$ is a \KNT model for $E^{d}$ and
$\operatorname*{typ}(E^{d})=\rm{I}_{0}^{\ast}$. Now observe that%
\begin{align*}
a_{2}^{d}  & =\left(  \frac{a_{1}}{2}\right)  ^{2}\left(  d-1\right)
+a_{2}d\equiv2\left(  \frac{a_{1}}{2}\right)  ^{2}+2\ \operatorname{mod}4\\
& \Longrightarrow\qquad v(a_{2}^{d})=\left\{
\begin{array}
[c]{cl}%
1 & \text{if }v(a_{1})\geq2,\\
\geq2 & \text{if }v(a_{1})=1.
\end{array}
\right.
\end{align*}
This shows that $c^d$ is as claimed by Corollary~\ref{TamagawaQ2}.

\qquad\textbf{Subcase 6d.} Suppose that $d\equiv3\ \operatorname{mod}%
4,\ v(n)\geq1,$ and $v(a_{1})\geq2$. By Table \ref{doddmodels}, $F_{R,0}$ is a
\KNT model for $E^{d}$ and $\operatorname*{typ}(E^{d})=\rm{I}_{n}^{\ast}$. From
(\ref{Instar6b}), we obtain
\[
\frac{a_{6}^{d}}{2^{n+2}}=\left(  \frac{a_{3}}{2^{\frac{n+4}{2}}}\right)
^{2}\left(  d^{3}-1\right)  +\frac{a_{6}}{2^{n+2}}d^{3}\equiv\frac{a_{3}^{2}%
}{2^{n+3}}+\frac{a_{6}}{2^{n+2}}\ \operatorname{mod}4.
\]
We conclude from Corollary \ref{TamagawaQ2} that%
\[
c^{d}=\left\{
\begin{array}
[c]{cl}%
2 & \text{if }v(a_{3}^{2}+2a_{6})=n+4,\\
4 & \text{if }v(a_{3}^{2}+2a_{6})\geq n+5.
\end{array}
\right.
\]

\qquad\textbf{Subcase 6e.} Suppose $d\equiv3\ \operatorname{mod}4,$
$v(a_{1})\geq2$, and $n\geq3$ is odd. Thus, $\mathcal{V}(E)=\left(
2,=1,=\frac{n+3}{2},\frac{n+5}{2},n+3\right)  $. By Table~\ref{doddmodels}, $F_{R,1}$ is a \KNT model for $E^{d}$ and $\operatorname*{typ}(E^{d})=\rm{I}_{n}^{\ast}$. To compute $c^{d}$, we must determine when equality holds in $v(a_{6}^{d})\geq n+3$ by Corollary~\ref{TamagawaQ2}. To this end, we have that
\begin{align*}
16a_{6}^{d}  & =a_{1}a_{3}^{2}d(a_{1}+4d)+4a_{3}^{2}(d^{3}-1+a_{2}%
d)+8a_{3}a_{4}d^{2}+16a_{6}d^{3}+2a_{3}^{3}\\
& =2^{n+5}\left(  \frac{a_{1}a_{3}^{2}d}{2^{n+5}}(a_{1}+4d)+\frac{a_{3}^{2}%
}{2^{n+3}}(d^{3}-1+a_{2}d)+\frac{a_{3}a_{4}d^{2}}{2^{n+2}}+\frac{a_{6}d^{3}%
}{2^{n+1}}+\frac{a_{3}^{3}}{2^{n+4}}\right)  .
\end{align*}
Considering the cases $v(a_{1})=2$ and $v(a_{1})\geq3$, it is easily checked that $v\!\left(  \frac{a_{1}a_{3}^{2}d}{2^{n+5}}(a_{1}+4d)\right)\geq3$. Thus,
\begin{align*}
\frac{a_{6}^{d}}{2^{n+1}}  & \equiv d^{3}-1+a_{2}d+\frac{a_{3}a_{4}d^{2}%
}{2^{n+2}}+\frac{a_{6}d^{3}}{2^{n+1}}+\left(  \frac{a_{3}}{2^{\frac{n+3}{2}}}\right)  ^{2}\frac{a_{3}}{2}\ \operatorname{mod}8\\
& \equiv d-1+a_{2}d+\frac{a_{3}a_{4}}{2^{n+2}}+\frac{a_{6}}{2^{n+1}}%
+\frac{a_{3}}{2}\ \operatorname{mod}8.
\end{align*}
It follows from Corollary~\ref{TamagawaQ2} that
\[
c^{d}=\left\{
\begin{array}
[c]{cl}%
2 & \text{if }v(2^{n+2}(d-1+a_{2}d)+a_{3}a_{4}+2a_{6}+2^{n+1}a_{3})=n+4,\\
4 & \text{if }v(2^{n+2}(d-1+a_{2}d)+a_{3}a_{4}+2a_{6}+2^{n+1}a_{3})\geq n+5.
\end{array}
\right.
\]

\textbf{Subcase 6f.} Suppose $d\equiv3\ \operatorname{mod}4,\ n\geq5,$ and $v(a_{1})=1$. By Table~\ref{doddmodels}, $F_{R,3}$ is a \KNT model for $E^{d}$ and $\operatorname*{typ}(E^{d})=\rm{I}_{n-4}$. To compute $c^{d}$, it suffices to determine when $a_{2}^{d}$ is even by Corollary~\ref{TamagawaQ2}.
Since
\begin{align*}
4a_{2}^{d}  & =\left(  \frac{a_{1}}{2}\right)  ^{2}\left(  d-1\right)
+a_{2}d\\
& \equiv d-1+a_{2}d\ \operatorname{mod}8,
\end{align*}
we conclude by Corollary \ref{TamagawaQ2} that
\[
c^{d}=\left\{
\begin{array}
[c]{cl}
2-\left(  n\ \operatorname{mod}2\right)   & \text{if }v(d-1+a_{2}d)=2,\\
n-4 & \text{if }v(d-1+a_{2}d)\geq3.
\end{array}
\right.
\]

\textbf{Case 7.} Let $R=\rm{IV}^{\ast}$. Then $\mathcal{V}(E)=\left(
1,2,=2,3,4\right)  $ and by Table~\ref{doddmodels}, $F_{R,0}$ is a \KNT
model for $E^{d}$ and
\[
\operatorname*{typ}(E^{d})=\left\{
\begin{array}
[c]{cl}%
\rm{IV}^{\ast} & \text{if }d\equiv1\ \operatorname{mod}4,\\
\rm{I}_{0}^{\ast} & \text{if }d\equiv3\ \operatorname{mod}4.
\end{array}
\right.
\]
We now consider these two cases separately.

\qquad\textbf{Subcase 7a.} Suppose $d\equiv1\ \operatorname{mod}4$ so that
$\operatorname*{typ}(E^{d})=\rm{IV}^{\ast}$. Now observe that
\begin{align*}
\frac{a_{6}^{d}}{4}  & =\left(  \frac{a_{3}}{4}\right)  ^{2}\left(
d^{3}-1\right)  +\frac{a_{6}}{4}d^{3}\\
& \equiv d-1+\frac{a_{6}}{4}\ \operatorname{mod}8.
\end{align*}
We conclude from Corollary~\ref{TamagawaQ2} that
\[
c^{d}=\left\{
\begin{array}
[c]{cl}
1 & \text{if }v(4d-4+a_{6})=4,\\
3 & \text{if }v(4d-4+a_{6})\geq5.
\end{array}
\right.
\]

\qquad\textbf{Subcase 7b.} Suppose $d\equiv3\ \operatorname{mod}4$ so that
$\operatorname*{typ}(E^{d})=\rm{I}_{0}^{\ast}$. Since
\[
a_{2}^{d}=\left(  \frac{a_{1}}{2}\right)  ^{2}\left(  d-1\right)
+a_{2}d\equiv\frac{a_{1}^{2}}{4}\ \operatorname{mod}4,
\]
it follows from Corollary~\ref{TamagawaQ2} that
\[
c^{d}=\left\{
\begin{array}
[c]{cl}
1 & \text{if }v(a_{1})=1,\\
2 & \text{if }v(a_{1})\geq2.
\end{array}
\right.
\]

\textbf{Case 8.} Let $R=\rm{III}^{\ast}$, suppose that $v(a_{1})=1$ with
$d\equiv3\ \operatorname{mod}4$. Since $E$ is given by a \KNT model, we have that $\mathcal{V}(E)=\left(  =1,2,3,=3,5\right)  $. From Table~\ref{doddmodels}, $F_{R,0}$ is a \KNT model for $E^{d}$ and $\operatorname*{typ}(E^{d})=\rm{I}_{2}^{\ast}$. Now observe that
\begin{align*}
\frac{a_{6}^{d}}{16}  & =\left(  \frac{a_{3}}{8}\right)  ^{2}\left(
d^{3}-1\right)  +\frac{a_{6}}{16}d^{3}\\
& \equiv2\left(  \frac{a_{3}}{8}\right)  ^{2}+\frac{a_{6}}{16}\ \operatorname{mod}4.
\end{align*}
We can now conclude from Corollary~\ref{TamagawaQ2} that
\[
c^{d}=\left\{
\begin{array}
[c]{cl}
2 & \text{if }v(a_{3}^{2}+2a_{6})=6,\\
4 & \text{if }v(a_{3}^{2}+2a_{6})\geq7.
\end{array}
\right.
\]

\textbf{Case 9.} Let $R=\rm{II}^{\ast}$ and suppose that $v(a_{1})=1$ with
$v(a_{3})\geq3$. In particular, $\mathcal{V}(E)=\left(  =1,2,3,4,=5\right)  $.
By Table \ref{doddmodels}, $F_{R,0}$ (resp. $F_{R,1}$) is a \KNT model for $E^{d}$ if $v(a_{3})=3$ (resp. $\geq4$). In both cases, $\operatorname*{typ}(E^{d})=\rm{I}_{3}^{\ast}$. By Corollary \ref{TamagawaQ2}, it suffices to determine
when equality occurs in $v(a_{6}^{d})\geq6$. We now consider these two cases separately.

\qquad\textbf{Subcase 9a.} Suppose $v(a_{3})=3$, and consider the \KNT model
$F_{R,0}$. Then
\begin{align*}
\frac{a_{6}^{d}}{16} &  =\left(  \frac{a_{3}}{8}\right)  ^{2}\left(
d^{3}-1\right)  +\frac{a_{6}}{16}d^{3}\\
&  \equiv d-1+\frac{a_{6}}{16}d\ \operatorname{mod}8.
\end{align*}
Consequently,
\[
c^{d}=\left\{
\begin{array}
[c]{cl}
2 & \text{if }v(16d-16+a_{6}d)=6,\\
4 & \text{if }v(16d-16+a_{6}d)\geq7.
\end{array}
\right.
\]

\qquad\textbf{Subcase 9b.} Suppose $v(a_{3})\geq4$, and consider the \KNT
model $F_{R,1}$. Then%
\begin{align*}
\frac{a_{6}^{d}}{16}  & =\left(  \frac{a_{1}}{2}\right)  ^{10}a_{1}
^{2}+\left(  \frac{a_{1}}{2}\right)  ^{5}\frac{a_{3}}{4}d^{2}+\left(
\frac{a_{1}}{2}\right)  ^{8}a_{2}d+\left(  \frac{a_{1}}{2}\right)  ^{4}
\frac{a_{4}}{4}d^{2}+\left(  \frac{a_{3}}{8}\right)  ^{2}d^{3}+\left(
\frac{a_{1}}{2}\right)  ^{10}d-\left(  \frac{a_{6}}{2^{5}}\right)  ^{2}
+\frac{a_{6}}{16}d^{3}\\
& \equiv4+\left(  \frac{a_{1}}{2}\right)  \frac{a_{3}}{4}+a_{2}d+\frac{a_{4}
}{4}+\left(  \frac{a_{3}}{8}\right)  ^{2}d+d-1+\frac{a_{6}}{16}
d\ \operatorname{mod}8\\
& \equiv3+a_{2}d+\frac{a_{4}}{4}+d+\frac{a_{6}}{16}d\ \operatorname{mod}8.
\end{align*}
We conclude from Corollary \ref{TamagawaQ2} that
\[
c^{d}=\left\{
\begin{array}
[c]{cl}
2 & \text{if }v(48+16a_{2}d+4a_{4}+16d+a_{6}d)=6,\\
4 & \text{if }v(48+16a_{2}d+4a_{4}+16d+a_{6}d)\geq7.
\end{array}
\right.
\]

\end{proof}

\subsection{The case when \texorpdfstring{$v(d)=1$}{v(d)=1}}\label{sec5_2}

\begin{proof}[Proof of Theorem~\ref{thmQ2combined} for when $v(d)=1$] 
Let $E$ denote a \KNT model for an elliptic curve defined over $\Q_2$, and denote its Weierstrass coefficients by $a_{i}$. Next, let $E^{d}$ denote the elliptic curve with Weierstrass model (\ref{Edmodel}). In particular, $E^{d}$ is a model for the quadratic twist of $E$ by $d$. For a fixed $R=\operatorname*{typ}(E)$, let $F_{R,j}^{1}$ denote the elliptic curve obtained from $E^{d}$ via the $\mathbb{Q}_{2}$-isomorphism $[u_{j},r_{j},s_{j},w_{j}]$, where $u_{j},r_{j},s_{j},w_{j}$ are as given in Table \ref{FRj}. 
In addition, for each $R$, we set $F_{R,0}^1$ to be the elliptic curve attained from $E^d$ via the $\mathbb{Q}_{2}$-isomorphism $[u_{0},r_{0},s_{0},w_{0}]$.
For each of the conditions listed in Table~\ref{tab:localdata-deven}, we associate a model $F_{R,j}^{1}$ as listed in Table~\ref{tab:KNtypeEdeven}. In the SageMath \cite{sagemath} accompaniment to this proof \cite[Theorem5\_2.ipynb]{gittwists}, we verify that $\mathcal{V}(F_{R,j}^{1})$ is as given in the table. Consequently, by Proposition~\ref{lem:pnormal}, we conclude that $R^{d}=\operatorname*{typ}(E^{d})$ is as claimed since $E^{d}$ is $\mathbb{Q}_{2}$-isomorphic to $F_{R,j}^{1}$. This shows that the pair $(R,R^{d})$ is as given in Table \ref{tab:localdata-deven}.

{\begingroup \footnotesize
\renewcommand{\arraystretch}{1.5}
 \begin{longtable}{ccccccc}
 	\caption{Conditions to determine $\mathcal{V}(F_{R,j}^{1})$}
    \label{tab:KNtypeEdeven}\\
	\hline
$R$ & \multicolumn{3}{c}{Conditions} & $j$ & $\mathcal{V}(F_{R,j}%
^{1})$ & $\operatorname*{typ}(F_{R,j}^{1})$   \\
	\hline
	\endfirsthead
	\hline
$R$ & \multicolumn{3}{c}{Conditions} & $j$ & $\mathcal{V}(F_{R,j}^{1})$ & $\operatorname*{typ}(F_{R,j}^{1})$  \\
	\hline
	\endhead
	\hline 
	\multicolumn{4}{r}{\emph{continued on next page}}
	\endfoot
	\hline 
	\endlastfoot
 %%%%%%% I_0 %%%%%%%
$\rm{I}_{0}$ & $v(a_{1})=0$ & $v(a_{6})=0$ &  & $1$ & $\left(  \infty,=1,6,=6,11\right)  $ & $\rm{I}_{8}^{\ast}$\\\cmidrule{3-7}
&  & $v(a_{6})\geq1$ &  & $2$ & $\left(  \infty,=1,6,=6,11\right)  $ &$\rm{I}_{8}^{\ast}$\\\cmidrule{2-7}
& $v(a_{1})\geq1$ &  &  & $0$ & $\left(  \infty,1,\infty,2,=1\right)  $ &
$\rm{II}$\\\hline

 %%%%%%% I_n %%%%%%%

 $\rm{I}_{n>0}$ & $v(n)=0$ & $v(P_{R,1}^{1})=n+3$ &  & $1$ & $\left(  \infty
,=1,=\frac{n+11}{2},\frac{n+13}{2},n+11\right)  $ & $\rm{I}_{n+8}^{\ast}$\\\cmidrule{3-7}
&  & $v(P_{R,1}^{1})\geq n+4$ &  & $2$ & $\left(  \infty,=1,=\frac{n+11}%
{2},\frac{n+13}{2},n+11\right)  $ & $\rm{I}_{n+8}^{\ast}$\\\cmidrule{2-7}
& $v(n)\geq1$ & $v(P_{R,2}^{1})=n+1$ &  & $3$ & $\left(  \infty,=1,\infty,=\frac{n+12}{2},n+11\right)  $ & $\rm{I}_{n+8}^{\ast}$\\\cmidrule{3-7}
&  & $v(P_{R,2}^{1})\geq n+2$ &  & $4$ & $\left(  \infty,=1,\infty,=\frac{n+12}{2},n+11\right)  $ & $\rm{I}_{n+8}^{\ast}$\\\hline

 %%%%%%% II %%%%%%%

$\rm{II}$ & $v(a_{3})=1$ &  &  & $0$ & $\left(  \infty,1,\infty,3,=3\right)  $ & $\rm{I}_{0}^{\ast}$\\\cmidrule{2-7}
& $v(a_{3})\geq2$ & $v(a_{1})=1$ & $v(a_{4})=1$ & $1$ & $\left(
\infty,=1,=3,=3,5\right)  $ & $\rm{I}_{2}^{\ast}$\\\cmidrule{4-7}
&  &  & $v(a_{4})\geq2,(P_{R,1})=4$ & $2$ & $\left(  \infty,=1,=3,4,6\right) $ & $\rm{I}_{3}^{\ast}$\\\cmidrule{4-7}
&  &  & $v(a_{4})\geq2,(P_{R,1})\geq5$ & $3$ & $\left(  2,=1,=3,4,6\right)  $ & $\rm{I}_{3}^{\ast}$\\\cmidrule{3-7}
&  & $v(a_{1})\geq2$ & $v(a_{4})=1$ & $1$ & $\left(  \infty,2,=3,=3,5\right) $ & $\rm{III}^{\ast}$\\\cmidrule{3-7}
&   \multicolumn{3}{r}{ $v(a_{1})\geq2, v(a_{4})\geq2,v(a_{3}^{2}+4a_{6}-4d)=4$} & $1$ & $\left(\infty,2,=3,4,=5\right)  $ & $\rm{II}^{\ast}$\\\cmidrule{3-7}
&   \multicolumn{3}{r}{ $v(a_{1})\geq2, v(a_{4})\geq2,v(a_{3}^{2}+4a_{6}-4d)\geq5$} & $4$ & $\left(\infty,1,=0,0,0\right)  $ & $\rm{I}_{0}$\\\hline

 %%%%%%% III %%%%%%%

$\rm{III}$ & $v(a_{3})=1$ &  &  & $0$ & $\left(  \infty,1,\infty,3,=3\right)  $ &
$\rm{I}_{0}^{\ast}$\\\cmidrule{2-7}
& $v(a_{3})\geq2$ & $v(a_{1})=1$ &  & $0$ & $\left(  \infty,=1,\infty
,=3,5\right)  $ & $\rm{I}_{2}^{\ast}$\\\cmidrule{3-7}
&  & $v(a_{1})\geq2$ &  & $0$ & $\left(  \infty,2,\infty,=3,5\right)  $ &
$\rm{III}^{\ast}$\\\hline

 %%%%%%% IV %%%%%%%

$\rm{IV}$ &  &  &  & $0$ & $\left(  \infty,1,\infty,3,=3\right)  $ & $\rm{I}_{0}^{\ast}$\\\hline

 %%%%%%% I0* %%%%%%%

$\rm{I}_{0}^{\ast}$ & $v(a_{1})=1$ & $v(a_{3})=2$ &  & $1$ & $\left(\infty,=1,4,=4,7\right)  $ & $\rm{I}_{4}^{\ast}$\\\cmidrule{3-7}
&  &  & $v(P_{R,2})=6$ & $2$ & $\left(  \infty,=1,=4,5,8\right)  $ &$\rm{I}_{5}^{\ast}$\\\cmidrule{4-7}
&  & $v(a_{3})\geq3$ & $v(P_{R,2})\geq7$ & $3$ & $\left(  \infty,=1,=4,5,8\right)  $ & $\rm{I}_{5}^{\ast}$\\\cmidrule{2-7}
& $v(a_{1})\geq2$ & $v(a_{3})=2$ &  & $0$ & $\left(  \infty,2,\infty,5,=5\right)  $ & $\rm{II}^{\ast}$\\\cmidrule{2-7}
& $v(a_{1})\geq2,$ & \multicolumn{2}{c}{$v(a_{3}^{2}d+4a_{6}d-64)=7$} & $4$ &$\left(  1,1,=1,1,=1\right)  $ & $\rm{II}$\\\cmidrule{3-7}
& $v(a_{3})\geq3$ & \multicolumn{2}{c}{$v(a_{3}^{2}d+4a_{6}d-64)\geq 8,v(a_{4}-4a_{2})=3$} & $4$ & $\left(  1,1,=1,=1,2\right)  $ & $\rm{III}$\\\cmidrule{3-7}
&  & \multicolumn{2}{c}{$v(a_{3}^{2}d+4a_{6}d-64)\geq8,v(a_{4}-4a_{2})\geq4$} & $4$ & $\left(  1,1,=1,2,2\right)  $ & $\rm{IV} $\\\hline

 %%%%%%% In* %%%%%%%

 $\rm{I}_{n>0}^{\ast}$ & $n=1$ & $v(a_{1})=1$ &  & $1$ & $\left(  \infty,=1,\infty,=4,7\right)  $ & $\rm{I}_{4}^{\ast}$\\\cmidrule{3-7}
&  & $v(a_{1})\geq2$ &  & $1$ & $\left(  \infty,3,\infty,=4,=5\right)  $ & $\rm{II}^{\ast}$\\\cmidrule{2-7}
& $n=2$ & $v(a_{1})\geq2$ & $v(a_3)=3$ & $2$ & $\left(=1,1,=2,=1,=1\right)  $ & $\rm{II}$\\\cmidrule{4-7}
&  &  & $v(a_3)\geq4$ & $2$ & $\left(  =1,1,3,=1,2\right)  $ & $\rm{III}$\\\cmidrule{2-7}
& $n=3$ & $v(a_{1})\geq2$ &  & $2$ & $\left(  =1,1,=2,2,=1\right)  $ & $\rm{II}$\\\cmidrule{2-7}
& $n=4,$ & $v(P_{R,3})=4$ &  & $3$ & $\left(  =1,1,=2,3,=3\right)  $ & $\rm{I}_{0}^{\ast}$\\\cmidrule{3-7}
& $v(a_{1})\geq2$ & $v(P_{R,3})\geq5$ & $v(P_{R,4})=5$ & $3$ & $\left(=1,=1,=2,3,4\right)  $ & $\rm{I}_{1}^{\ast}$\\\cmidrule{4-7}
&  &  & $v(P_{R,4})\geq6$ & $3$ & $\left(  =1,2,=2,3,4\right)  $ & $\rm{IV}^{\ast}$\\\cmidrule{2-7}
& $n=5$ & $v(a_{1})\geq2$ &  & $2$ & $\left(  =1,1,=3,3,=3\right)  $ & $\rm{I}_{0}^{\ast}$\\\cmidrule{2-7}
& $n=6$ & $v(a_{1})\geq2$ & $v(a_1^2 +4a_2-4d)\geq5$ & $2$ & $\left(=1,2,4,=3,5\right)  $ & $\rm{III}^{\ast}$\\\cmidrule{2-7}
& $n=7$ & $v(a_{1})\geq2$ & $v(a_1^2 +4a_2-4d)\geq5$ & $2$ & $\left(=1,2,=4,4,=5\right)  $ & $\rm{II}^{\ast}$\\\cmidrule{2-7}
& $n=8$ & $v(a_{1})\geq2$ & $v(a_1^2 +4a_2-4d)\geq5$ & $4$ & $\left(  =0,0,\infty,=0,1\right)  $ & $\rm{I}_{0}$\\\cmidrule{2-7}
& $n\geq2,$ & $v(n)\geq1$ & $v(P_{R,1})=n+7$ & $5$ & $\left(  \infty,=1,=\frac{n+8}{2},=\frac{n+8}{2},n+7\right)  $ & $\rm{I}_{n+4}^{\ast}$\\\cmidrule{4-7}
& $v(a_{1})=1$ &  & $v(P_{R,1})\geq n+8$ & $1$ & $\left(  \infty,=1,\infty,=\frac{n+8}{2},n+7\right)  $ & $\rm{I}_{n+4}^{\ast}$\\\cmidrule{3-7}
&  & $v(n)=0$ & $v(P_{R,8})=n+7$ & $6$ & $\left(  \infty,=1,=\frac{n+7}{2},\frac{n+9}{2},n+7\right)  $ & $\rm{I}_{n+4}^{\ast}$\\\cmidrule{4-7}
&  &  & $v(P_{R,8})\geq n+8$ & $7$ & $\left(  \infty,=1,=\frac{n+7}{2},\frac{n+9}{2},n+7\right)  $ & $\rm{I}_{n+4}^{\ast}$\\\cmidrule{2-7}
& $n\geq6,$ & $v(P_{R,6})=4$ & $v(n)\geq1$ & $2$ & $\left(  =1,=1,\frac{n+2}{2},=\frac{n}{2},n-1\right)  $ & $\rm{I}_{n-4}^{\ast}$\\\cmidrule{4-7}
& $v(a_{1})\geq2$ &  & $v(n)=0$ & $8$ & $\left(  =1,=1,=\frac{n-1}{2},\frac{n+1}{2},n-1\right)  $ & $\rm{I}_{n-4}^{\ast}$\\\cmidrule{2-7}
& $n\geq9,$ & $v(P_{R,6})\geq5$ & $v(n)=0$ & $9$ & $\left(  =0,0,=\frac{n-7}{2},\frac{n-7}{2},=n-8\right)  $ & $\rm{I}_{n-8}$\\\cmidrule{4-7}
& $v(a_{1})\geq2$ &  & $v(n)\geq1$ & $9$ & $\left(  =0,0,\frac{n-6}{2},=\frac{n-8}{2},n-6\right)  $ & $\rm{I}_{n-8}$\\\hline

%%%% IV* %%%%%

$\rm{IV}^{\ast}$ & $v(a_{1})=1$ &  &  & $1$ & $\left(  \infty,=1,=4,=4,7\right)  $
& $\rm{I}_{4}^{\ast}$\\\cmidrule{2-7}
& $v(a_{2})\geq2$ &  &  & $0$ & $\left(  \infty,3,\infty,5,=5\right)  $ &
$\rm{II}^{\ast}$\\\hline

%%%% III* %%%%%

$\rm{III}^{\ast}$ & $v(a_{1})=1$ & $v(P_{R,1})=9$ &  & $1$ & $\left(\infty,=1,=5,=5,9\right)  $ & $\rm{I}_{6}^{\ast}$\\\cmidrule{3-7}
&  & $v(P_{R,1})\geq10$ &  & $2$ & $\left(  \infty,=1,\infty,=5,9\right)  $ & $\rm{I}_{6}^{\ast}$\\\cmidrule{2-7}
& $v(a_{1})\geq2$ & $v(a_{3})=3$ &  & $3$ & $\left(  \infty,1,\infty,=1,=1\right) $ & $\rm{II}$\\\cmidrule{3-7}
&  & $v(a_{3})\geq4$ &  & $3$ & $\left(  \infty,1,\infty,=1,2\right)  $ &
$\rm{III}$\\\hline

%%%% II* %%%%%

$\rm{II}^{\ast}$ & $v(a_{1})=1$ & $v(P_{R,1})=10$ &  & $1$ & $\left(\infty,=1,=5,6,10\right)  $ & $\rm{I}_{7}^{\ast}$\\\cmidrule{3-7}
&  & $v(P_{R,1})\geq11$ &  & $2$ & $\left(  \infty,=1,=5,6,10\right)  $ & $\rm{I}_{7}^{\ast}$\\\cmidrule{2-7}
& $v(a_{1})\geq2$ & $v(a_{3})=3$ &  & $3$ & $\left(  \infty,1,\infty,2,=1\right)  $ & $\rm{II}$\\\cmidrule{2-7}
& $v(a_{1})\geq2,$ & $v(P_{R,3})=12$ &  & $4$ & $\left(  \infty,1,=2,3,=3\right)  $ & $\rm{I}_{0}^{\ast}$\\\cmidrule{3-7}
& $v(a_{3})\geq4$ & $v(P_{R,3})\geq13$ & $v(P_{R,4})=4$ & $4$ & $\left(\infty,=1,=2,3,4\right)  $ & $\rm{I}_{1}^{\ast}$\\\cmidrule{4-7}
&  &  & $v(P_{R,4})\geq5$ & $4$ & $\left(  \infty,2,=2,3,4\right)  $ & $\rm{IV}^{\ast}$

 \end{longtable}
\endgroup}

It remains to show that the pairs $\left(  \delta,\delta^{d}\right)  $, $\left(  f,f^{d}\right)  $, and $\left(  c,c^{d}\right)  $ are as given in Table~\ref{tab:localdata-deven}. Since $E$ is given by a \KNT model, a case-by-case analysis with Lemma \ref{Lemmadisccond} shows that $\delta$ and $f$ are as claimed. Since $R^{d}$ has been determined, we obtain $\delta^{d}$ and $f^{d}$ from Table \ref{tab:discriminantconductor}. This case-by-case analysis establishes that the pairs $\left(  \delta,\delta^{d}\right)  $ and $\left(  f,f^{d}\right)  $ are as claimed in Table \ref{tab:localdata-deven}.

We now turn our attention to the computation of the pair $\left(c,c^{d}\right)  $. In fact, it suffices to show that $c^{d}$ is as claimed since Proposition~\ref{lem:pnormal} determines $c$. To this end, we note that $c^{d}$ is uniquely determined if $R^{d}\in\left\{  \rm{I}_{0},\rm{II},\rm{III},\rm{II}^{\ast},\rm{III}^{\ast}\right\}  $. Below, we proceed by considering the cases for a fixed $R$ that correspond to a $R^{d}\not \in \left\{  \rm{I}_{0},\rm{II},\rm{III},\rm{II}^{\ast},\rm{III}^{\ast}\right\}  $. Specifically, for each fixed $R$, we consider the conditions given in Table~\ref{tab:KNtypeEdeven} and the corresponding \KNT model $F_{R,j}^{1}$ and determine $c^{d}$ by means of Corollary~\ref{TamagawaQ2}. In what follows, we denote the Weierstrass coefficients of $F_{R,j}^{1}$ by
$a_{i}^{d}$. We note that this proof also makes reference to polynomials $P_{R,i}^1$, given in Table~\ref{tab:PRj}. Since $v(d)=1$ throughout this proof, we will write $F_{R,j}=F_{R,j}^1$ and $P_{R,i}=P_{R,i}^1$. 

\textbf{Case 1.} Let $R=\rm{I}_{0}$. By Table \ref{tab:KNtypeEdeven}, the local
Tamagawa number $c^{d}$ is not uniquely determined if $\mathcal{V}(E)$ is
either $(=0,0,2,1,=0)$ or $(=0,0,2,=0,1)$. We consider these two cases separately.

\qquad\textbf{Subcase 1a.} Suppose $\mathcal{V}(E)=\left(  =0,0,2,1,=0\right)
$ and consider the Weierstrass model $F_{R,1}$. By Table
\ref{tab:KNtypeEdeven}, $\operatorname*{typ}(E^{d})=\rm{I}_{8}^{\ast}$. Since
$v(a_{1}a_{3}+\frac{1}{4}a_{3}^{2})\geq3$, we deduce that
\begin{align*}
\frac{a_{6}^{d}}{64d^{3}}  & =9a_{1}^{6}+4a_{1}^{4}a_{2}+a_{1}^{3}a_{3}+\frac{a_{3}^{2}}{4}+2a_{1}^{2}a_{4}-\frac{d}{2}\cdot\frac{1+a_{6}^{2}+2a_{6}}{2}+a_{6}\\
& \equiv1+4a_{2}+2a_{4}-\frac{d}{2}\cdot\frac{1+a_{6}^{2}+2a_{6}}{2}+a_{6}\ \operatorname{mod}8.
\end{align*}
It follows from Corollary \ref{TamagawaQ2} that
\[
c^{d}=\left\{
\begin{array}
[c]{cl}
2 & \text{if }v\!\left(  4+16a_{2}+8a_{4}+4a_{6}-d-da_{6}^{2}-2a_{6}d\right)
=4,\\
4 & \text{if }v\!\left(  4+16a_{2}+8a_{4}+4a_{6}-d-da_{6}^{2}-2a_{6}d\right)
\geq5.
\end{array}
\right.
\]

\qquad\textbf{Subcase 1b.} Suppose $\mathcal{V}(E)=\left(  =0,0,2,=0,1\right)
$ and consider the Weierstrass model $F_{R,2}$. By Table
\ref{tab:KNtypeEdeven}, $\operatorname*{typ}(E^{d})=\rm{I}_{8}^{\ast}$. Now observe that
\begin{align*}
\frac{a_{6}^{d}}{64d^{3}}  & =68a_{1}^{6}+16a_{1}^{4}a_{2}+2a_{1}^{3}a_{3}+4a_{1}^{2}a_{4}+\frac{a_{3}^{2}-a_{6}^{2}d}{4}+a_{6}\\
& \equiv\frac{a_{3}^{2}-a_{6}^{2}d}{4}+a_{6}\ \operatorname{mod}8.
\end{align*}
It follows from Corollary \ref{TamagawaQ2} that
\[
c^{d}=\left\{
\begin{array}
[c]{cl}
2 & \text{if }v\!\left(  a_{3}^{2}-a_{6}^{2}d+4a_{6}\right)  =4,\\
4 & \text{if }v\!\left(  a_{3}^{2}-a_{6}^{2}d+4a_{6}\right)  \geq5.
\end{array}
\right.
\]

\textbf{Case 2.} Let $R=\operatorname*{typ}(E)=\mathrm{{I}_{n>0}}$. Then
\[
\mathcal{V}(E)=\left\{
\begin{array}
[c]{cl}%
\left(  =0,0,=\frac{n+1}{2},\frac{n+1}{2},=n\right)   & \text{if }n\text{ is
odd,}\\
\left(  =0,0,\frac{n+2}{2},=\frac{n}{2},n+1\right)   & \text{if }n\text{ is
even.}%
\end{array}
\right.
\]
By Table \ref{tab:KNtypeEdeven}, there are four possible \KNT models
$F_{R,j}$ to consider, which depend on conditions for $P_{R,i}$ for $i=1,2$.
Note that in each case, $\operatorname{typ}(E^{d})=\operatorname{typ}%
(F_{R,j})=\mathrm{{I}_{n+8}^{\ast}}$. In \cite[Theorem5\_2.ipynb]{gittwists},
it is verified that $v(P_{R,1})\geq n+3,$ $v(P_{R,2})\geq n+1$, and
\begin{equation}
a_{6}^{d}=\left\{
\begin{array}
[c]{cl}%
16d^{3}\left(  P_{R,1}-4a_{1}a_{3}^{2}-8a_{3}^{3}-8a_{3}a_{4}\right)   &
\text{if }v(n)=0\text{ and }v(P_{R,1})=n+3,\\
16d^{3}P_{R,1} & \text{if }v(n)=0\text{ and }v(P_{R,1})\geq n+4,\\
64d^{3}a_{1}^{-3}\left(  P_{R,2}-a_{1}^{3}a_{4}^{2}d\right)   & \text{if
}v(n)\geq1\text{ and }v(P_{R,2})=n+1,\\
64d^{3}a_{1}^{-3}P_{R,2} & \text{if }v(n)\geq1\text{ and }v(P_{R,2})\geq n+2.
\end{array}
\right.  \label{Eqna6dIn}%
\end{equation}
We also have the following equalities:%
\begin{align*}
P_{R,3}  & =P_{R,1}-4a_{1}a_{3}^{2}-8a_{3}^{3}-8a_{3}a_{4},\\
P_{R,4}  & =P_{R,2}-a_{1}^{3}a_{4}^{2}d.
\end{align*}
If $v(n)=0$, then $v(4a_{1}a_{3}^{2}+8a_{3}^{3}+8a_{3}a_{4})=n+3$. Thus,
$v(P_{R,1})=n+3$ if and only if $v(P_{R,3})\geq n+4$. Similarly, if
$v(n)\geq1$, then $v(a_{1}^{3}a_{4}^{2}d)=n+1$. Hence, $v(P_{R,2})=n+1$ if and
only if $v(P_{R,4})\geq n+2$. From these facts, we obtain the claimed local
Tamagawa number by Corollary \ref{TamagawaQ2} since%
\[
c^{d}=\left\{
\begin{array}
[c]{cl}%
2 & \text{if }v(a_{6}^{d})=n+11,\\
4 & \text{if }v(a_{6}^{d})\geq n+12.
\end{array}
\right.
\]
It is now easily checked using (\ref{Eqna6dIn}) and the assumptions on
$\mathcal{V}(E)$ that $c^{d}$ is as claimed in Table~\ref{tab:localdata-deven}.

\textbf{Case 3.} Let $R=\rm{II}$, so that $\mathcal{V}(E)=\left(
1,1,1,1,=1\right)  $. We now proceed by cases, following the order in Table~\ref{tab:KNtypeEdeven}, to verify the local Tamagawa number $c^{d}$ for those N\'{e}ron-Kodaira types for which $c^{d}$ is not uniquely determined.

\qquad\textbf{Subcase 3a.} Suppose $v(a_{3})=1$. Then, $F_{R,0}$ is a \KNT
model for $E^{d}$ and by Table \ref{tab:KNtypeEdeven}, $\operatorname*{typ}(F_{R,0})=\rm{I}_{0}^{\ast}$. Since
\[
\frac{4a_{2}^{d}}{d}=a_{1}^{2}+4a_{2}\equiv a_{1}^{2}\ \operatorname{mod}8,
\]
we conclude by Corollary \ref{TamagawaQ2} that
\[
c^{d}=\left\{
\begin{array}
[c]{cl}%
1 & \text{if }v(a_{1})=1,\\
2 & \text{if }v(a_{2})\geq2.
\end{array}
\right.
\]

\qquad\textbf{Subcase 3b.} Suppose $v(a_{3})\geq2$ and $v(a_{1})=v(a_{4})=1$.
Then, $F_{R,1}$ is a \KNT model for $E^{d}$ and by Table
\ref{tab:KNtypeEdeven}, $\operatorname*{typ}(F_{R,1})=\rm{I}_{2}^{\ast}$. Corollary
\ref{TamagawaQ2} now implies that $c^{d}$ is as claimed since
\[
\frac{4a_{6}^{d}}{d^{3}}=a_{3}^{2}+4a_{6}-d^{3}.
\]

\qquad\textbf{Subcase 3c.} Suppose $v(a_{3})\geq2,\ v(a_{1})=1,$ and
$v(a_{4})\geq2$. By Table \ref{tab:KNtypeEdeven}, $F_{R,2}$ (resp. $F_{R,3}$)
are \KNT models for $E^{d}$ if $v(P_{R,1})=4$ (resp. $\geq5$). In both cases, $\operatorname{typ}(E^d)=\rm{I}_3^*$. In
\cite[Theorem5\_2.ipynb]{gittwists}, it is verified that
\[
\frac{4a_{6}^{d}}{d^{3}}=\left\{
\begin{array}
[c]{cl}%
P_{R,2} & \text{if }v(P_{R,1})=4,\\
P_{R,1} & \text{if }v(P_{R,1})\geq5.
\end{array}
\right.
\]
It now follows from Corollary \ref{TamagawaQ2} that $c^{d}$ is as claimed. We
note that the conditions for $c^{d}$ are mutually exclusive since the equality
\[
P_{R,2}=P_{R,1}+4a_{1}^{2}+4a_{1}a_{3}+16a_{2}+8a_{4}+32
\]
implies that $v(P_{R,1})=4$ if and only if $v(P_{R,2})\geq5$. 

\textbf{Case 4.} Let $R=\rm{III}$, so that $\mathcal{V}(E)=\left(
1,1,1,=1,2\right)  $. We now proceed by cases, following the order in Table
\ref{tab:KNtypeEdeven}, to verify the local Tamagawa number $c^{d}$ for those
N\'{e}ron-Kodaira types for which $c^{d}$ is not uniquely determined.

\qquad\textbf{Subcase 4a.} Suppose $v(a_{3})=1$ so that $\mathcal{V}%
(E)=\left(  1,1,=1,=1,2\right)  $. Then, by Table \ref{tab:KNtypeEdeven} ,
$F_{R,0}$ is a \KNT model for $E^{d}$ and $\operatorname*{typ}(E^{d}%
)=\rm{I}_{0}^{\ast}$. By Corollary \ref{TamagawaQ2}, it is verified that $c^{d}$ is
as claimed since $\frac{4a_{2}^{d}}{d}=a_{1}^{2}+4a_{2}\equiv a_{1}%
^{2}\ \operatorname{mod}8$.

\qquad\textbf{Subcase 4b.} Suppose $v(a_{3})\geq2$ and $v(a_{1})=1$. Thus,
$\mathcal{V}(E)=\left(  =1,1,2,=1,2\right)  $. By Table \ref{tab:KNtypeEdeven}
, $F_{R,0}$ is a \KNT model for $E^{d}$ and $\operatorname*{typ}(E^{d})=\rm{I}_{2}^{\ast}$. Since%
\[
\frac{4a_{6}^{d}}{d^{3}}=a_{3}^{2}+4a_{6},
\]
it follows from Corollary \ref{TamagawaQ2} that $c^{d}$ is as claimed.

\textbf{Case 5.} Let $R=\rm{IV}$ so that $\mathcal{V}(E)=\left(
1,1,=1,2,2\right)  $. Then, by Table \ref{tab:KNtypeEdeven}, $F_{R,0}$ is a
\KNT model for $E^{d}$ and $\operatorname*{typ}(E^{d})=\rm{I}_{0}^{\ast}$. By
Corollary \ref{TamagawaQ2}, it is verified that $c^{d}$ is as claimed since
$\frac{4a_{2}^{d}}{d}=a_{1}^{2}+4a_{2}\equiv a_{1}^{2}\ \operatorname{mod}8$.

\textbf{Case 6.} Let $R=\rm{I}_{0}^{\ast}$ so that $\mathcal{V}(E)=\left(1,1,2,3,=3\right)  $. Next, we consider the cases appearing in Table~\ref{tab:KNtypeEdeven} for which the local Tamagawa number $c^{d}$ is not uniquely determined.

\qquad\textbf{Subcase 6a.} Suppose $v(a_{1})=1$ and $v(a_{3})=2$. Then,
$F_{R,1}$ is a \KNT model for $E^{d}$ and $\operatorname{typ}(E^d)=\rm{I}_4^*$. In \cite[Theorem5\_2.ipynb]{gittwists}, it is verified that $\frac{a_{6}^{d}}{4d^{3}}=P_{R,1}$ and that $v(P_{R,1})\geq6$. It follows that $c^{d}$ is as claimed by Corollary~\ref{TamagawaQ2}.

\qquad\textbf{Subcase 6b.} Suppose $v(a_{1})=1$ and $v(a_{3})\geq3$. Thus, $\mathcal{V}(E)=\left(  =1,1,3,3,=3\right)  $. By Table~\ref{tab:KNtypeEdeven},
$F_{R,2}$ (resp. $F_{R,3}$) are \KNT models for $E^{d}$ if $v(P_{R,2})=6$ (resp. $\geq7$). In both cases, $\operatorname*{typ}(E^{d})=\rm{I}_{5}^{\ast}$. In
\cite[Theorem5\_2.ipynb]{gittwists}, it is verified that
\begin{equation}
\frac{4a_{6}^{d}}{d^{3}}=\left\{
\begin{array}
[c]{cl}%
P_{R,3}+16a_{1}^{2}d^{2}+8a_{1}a_{3}d+64a_{2}d^{2}+256d^{3}+16a_{4}d &
\text{if }v(P_{R,2})=6,\\
P_{R,2} & \text{if }v(P_{R,2})\geq7.
\end{array}
\right.  \label{Io0stardev1}
\end{equation}
\qquad Since $v(4a_6)=v(16d)=5$, we obtain that $v(P_{R,3})=v(a_{3}^{2}+4a_{6}-16d)\geq6$. Similarly, as $v(4a_{1}^{2}d^{2})=6$, $v(4a_{1}%
a_{3}d+16a_{2}d^{2}+32d^{3}+8a_{4}d)\geq7$, and
\begin{equation}
P_{R,2}=P_{R,3}+4a_{1}^{2}d^{2}+4a_{1}a_{3}d+16a_{2}d^{2}+32d^{3}%
+8a_{4}d,\label{Io0stardev2}
\end{equation}
we deduce that $v(P_{R,2})\geq6$. In fact, (\ref{Io0stardev2}) implies that
$v(P_{R,2})=6$ if and only if $v(P_{R,3})\geq7$. Next, we observe that%
\[
v(16a_{1}^{2}d^{2}+8a_{1}a_{3}d+64a_{2}d^{2}+256d^{3}+16a_{4}d)\geq8.
\]
As a consequence of the above and (\ref{Io0stardev1}), we conclude that%
\[
v(a_{6}^{d})=\left\{
\begin{array}
[c]{cl}%
8 & \text{if }v(P_{R,2})=7\text{ or }v(P_{R,3})\geq7,\\
\geq9 & \text{if }v(P_{R,2})\geq8\text{ or }v(P_{R,3})\geq8.
\end{array}
\right.
\]
The local Tamagawa number $c^{d}$ now follows from Corollary \ref{TamagawaQ2}.

\qquad\textbf{Subcase 6c.} Suppose $v(a_{1})\geq2$, $v(a_{3})\geq3$,
$v(a_{3}^{2}d+4a_{6}d-64)\geq8$, and $v(a_{4}-4a_{2})\geq4$. By Table
\ref{tab:KNtypeEdeven}, $F_{R,4}$ is a \KNT model for $E^{d}$ and
$\operatorname*{typ}(E^{d})=\rm{IV}$. In \cite[Theorem5\_2.ipynb]{gittwists}, it is
verified that%
\[
\frac{256a_{6}^{d}}{d^{2}}=a_{3}^{2}d+4a_{6}d-64.
\]
It now follows that $c^{d}$ is as claimed by Corollary \ref{TamagawaQ2}.

\textbf{Case 7.} Let $R=\rm{I}^*_{n>0}$. Since $E$ is given by a \KNT model, we have that
\[
\mathcal{V}(E)=\left\{
\begin{array}
[c]{cl}%
\left(  1,=1,=\frac{n+3}{2},\frac{n+5}{2},n+3\right)  & \text{if }n\text{ is
odd,}\\
\left(  1,=1,\frac{n+4}{2},=\frac{n+4}{2},n+3\right)  & \text{if }n\text{ is
even.}%
\end{array}
\right.
\]

We now proceed by cases, following the order in Table~\ref{tab:KNtypeEdeven} for those cases for which the local Tamagawa number $c^d$ is not uniquely determined.

\qquad\textbf{Subcase 7a.} Suppose $n=1$ and $v(a_{1})=1$. By Table~\ref{tab:KNtypeEdeven}, $F_{R,1}$ is a \KNT model for $E^{d}$ and $\operatorname*{typ}(E^{d})=I_{4}^{\ast}$. In \cite[Theorem5\_2.ipynb]{gittwists}, it is verified that $v(P_{R,1,})\geq8$ and
\[
\frac{16a_{6}^{d}}{d^{3}}=P_{R,1,}.
\]
Consequently, $c^{d}$ is as claimed by Corollary \ref{TamagawaQ2}.

\qquad\textbf{Subcase 7b.} Suppose $n=2$ and $v(a_{1})\geq2$. By Table
\ref{tab:KNtypeEdeven},
\begin{equation}
\operatorname*{typ}(E^{d})=\left\{
\begin{array}
[c]{cl}%
\rm{I}_{0}^{\ast} & \text{if }v(P_{R,3})=4,\\
\rm{I}_{1}^{\ast} & \text{if }v(P_{R,3})\geq5,v(P_{R,4})=5,\\
\rm{IV}^{\ast} & \text{if }v(P_{R,3})\geq5,v(P_{R,4})\geq6,
\end{array}
\right.  \label{I4stareq1}
\end{equation}
and $F_{R,3}$ is a \KNT model for $E^{d}$ in each of the three cases
appearing in (\ref{I4stareq1}). In \cite[Theorem5\_2.ipynb]{gittwists}, it is
verified that $16a_{2}^{d}=P_{R,4}$ and $256a_{6}^{d}d^{-3}=P_{R,5}$. The
local Tamagawa number $c^{d}$ now follows from Corollary \ref{TamagawaQ2}.

\qquad\textbf{Subcase 7c.} Suppose $n=5$ and $v(a_{1})\geq2$. By Table
\ref{tab:KNtypeEdeven}, $F_{R,2}$ is a \KNT model for $E^{d}$ and
$\operatorname*{typ}(E^{d})=\rm{I}_{0}^{\ast}$. The local Tamagawa number $c^{d}$
now follows from Corollary \ref{TamagawaQ2} since $16a_{2}^{d}d^{-1}=a_{1}^{2}+4a_{2}-4d$.

\qquad\textbf{Subcase 7d.} Suppose $n\geq2$ and that $v(a_{1})=1$. By Table~\ref{tab:KNtypeEdeven}, there are four possible \KNT models $F_{R,j}$ to
consider, which depend on conditions for $P_{R,i}$ for $i=1,8$. In each case, $\operatorname*{typ}(E^{d})=I_{n+4}^{\ast}$. In \cite[Theorem5\_2.ipynb]%
{gittwists}, it is verified that $v(P_{R,1}),v(P_{R,8})\geq n+7$ and
\[
\frac{16a_{6}^{d}}{d^{3}}=\left\{
\begin{array}
[c]{cl}
P_{R,8}-8a_{1}a_{3}^{2}-4a_{3}^{3}-16a_{3}a_{4} & \text{if }v(n)=0\text{ and
}v(P_{R,8})=n+7,\\
P_{R,8} & \text{if }v(n)=0\text{ and }v(P_{R,8})\geq n+8,\\
P_{R,1}-4a_{4}^{2}d & \text{if }v(n)\geq1\text{ and }v(P_{R,1})=n+7,\\
P_{R,1} & \text{if }v(n)\geq1\text{ and }v(P_{R,1})\geq n+8.
\end{array}
\right.
\]
We also have the following equalities:
\begin{align*}
P_{R,9}  & =P_{R,8}-8a_{1}a_{3}^{2}-4a_{3}^{3}-16a_{3}a_{4},\\
P_{R,7}  & =P_{R,1}-4a_{4}^{2}d.
\end{align*}
If $v(n)=0$, then $v(8a_{1}a_{3}^{2}+4a_{3}^{3}+16a_{3}a_{4})=n+7$. Thus,
$v(P_{R,8})=n+7$ if and only if $v(P_{R,9})\geq n+8$. Similarly, if
$v(n)\geq1$, then $v(4a_{4}^{2}d)=n+7$. Hence, $v(P_{R,1})=n+7$ if and only if
$v(P_{R,7})\geq n+8$. From these facts, we obtain the claimed local Tamagawa
number by Corollary \ref{TamagawaQ2} since%
\[
c^{d}=\left\{
\begin{array}
[c]{cl}%
2 & \text{if }v(a_{6}^{d})=n+7,\\
4 & \text{if }v(a_{6}^{d})\geq n+8.
\end{array}
\right.
\]

\qquad\textbf{Subcase 7e.} Suppose $n\geq6,$ $v(a_{1})\geq2$, and $v(a_{1}%
^{2}+4a_{2}-4d)=4$. We note that $P_{R,6}=a_{1}^{2}+4a_{2}-4d$. By Table~\ref{tab:KNtypeEdeven}, there are two \KNT models $F_{R,j}$ to consider, depending on whether $v(n)=0$ or $v(n)\geq1$. In each case, $\operatorname*{typ}(E^{d})=\rm{I}_{n-4}^{\ast}$. In \cite[Theorem5\_2.ipynb]{gittwists}, it is verified that
\[
a_{6}^{d}=\left\{
\begin{array}
[c]{cl}%
\frac{1}{1024}P_{R,10} & \text{if }v(n)=0,\\
\frac{-d^{3}}{256}P_{R,2} & \text{if }v(v)\geq1.
\end{array}
\right.
\]
Loc. cit. also verifies that $v(P_{R,10})\geq n+9$ (resp. $v(P_{R,2})\geq
n+4$) if $v(n)=0$ (resp. $\geq1$). It is now verified that $c^{d}$ is as
claimed from Corollary \ref{TamagawaQ2} since
\[
c^{d}=\left\{
\begin{array}
[c]{cl}%
2 & \text{if }v(a_{6}^{d})=n-1,\\
4 & \text{if }v(a_{6}^{d})\geq n.
\end{array}
\right.
\]

\qquad\textbf{Subcase 7f}. Suppose $n\geq9,$ $v(a_{1})\geq2$, and $v(a_{1}^{2}+4a_{2}-4d)\geq5$. By Table~\ref{tab:KNtypeEdeven}, $F_{R,9}$ is a \KNT
model for $E^{d}$ and $\operatorname*{typ}(E^{d})=\rm{I}_{n-8}$. In \cite[Theorem5\_2.ipynb]{gittwists}, it is verified that
\[
64a_{2}^{d}=a_{2}d+4a_{2}d+6a_{3}-16
\]
and that $v(64a_{2}^{d})\geq1$. By assumption, $v(6a_{3})\geq7$. Consequently, by Corollary \ref{TamagawaQ2} we conclude that
\[
c^{d}=\left\{
\begin{array}
[c]{cl}%
2-(n\ \operatorname{mod}2) & \text{if }v(a_{2}d+4a_{2}d-16)=6,\\
n-8 & \text{if }v(a_{2}d+4a_{2}d-16)\geq7.
\end{array}
\right.
\]

\textbf{Case 8.} Let $R=\rm{IV}^{\ast}$ so that $\mathcal{V}(E)=\left(
1,2,=2,3,4\right)
$. Next, we consider the case appearing in Table~\ref{tab:KNtypeEdeven} for which the local Tamagawa number $c^{d}$ is not uniquely determined.

\qquad\textbf{Subcase 8a.} Suppose $v(a_{1})=1$. Then,
$F_{R,1}$ is a \KNT model for $E^{d}$ and $\operatorname{typ}(E^d)=\rm{I}_4^*$. In \cite[Theorem5\_2.ipynb]{gittwists}, it is verified that $\frac{a_{6}^{d}}{4d^{3}}=P_{R,1}$ and that $v(P_{R,1})\geq6$. It follows that $c^{d}$ is as claimed by Corollary~\ref{TamagawaQ2}.

\textbf{Case 9.}
Let $R=\rm{III}^{\ast}$ so that $\mathcal{V}(E)=\left(
1,2,=2,3,4\right)
$. Next, we consider the case appearing in Table~\ref{tab:KNtypeEdeven} for which the local Tamagawa number $c^{d}$ is not uniquely determined.

\qquad\textbf{Subcase 9a.} Suppose $v(a_{1})=1$. By Table \ref{tab:KNtypeEdeven}, $F_{R,1}$ (resp. $F_{R,2}$)
are \KNT models for $E^{d}$ if $v(P_{R,1})=9$ (resp. $\geq10$).
In both cases, $\operatorname{typ}(E^d)=\rm{I}_6^*$. In
\cite[Theorem5\_2.ipynb]{gittwists}, it is verified that
\[
\frac{4a_{6}^{d}}{d^{3}}=\left\{
\begin{array}
[c]{cl}%
P_{R,2} & \text{if }v(P_{R,1})=9,\\
P_{R,1} & \text{if }v(P_{R,1})\geq10.
\end{array}
\right.
\]
It now follows from Corollary \ref{TamagawaQ2} that $c^{d}$ is as claimed. We
note that the conditions for $c^{d}$ are mutually exclusive since the equality
\[
P_{R,2}=P_{R,1}-256d
\]
implies that $v(P_{R,1})=9$ if and only if $v(P_{R,2})\geq10$.

\textbf{Case 10.}
Let $R=\rm{II}^{\ast}$ so that $\mathcal{V}(E)=\left(
 1,2,3,4,=5\right)
$. Next, we consider the case appearing in Table~\ref{tab:KNtypeEdeven} for which the local Tamagawa number $c^{d}$ is not uniquely determined.

\qquad\textbf{Subcase 10a.} Suppose $v(a_{1})=1$. By Table \ref{tab:KNtypeEdeven}, $F_{R,1}$ (resp. $F_{R,2}$)
are \KNT models for $E^{d}$ if $v(P_{R,1})=10$ (resp. $\geq11$).
In both cases, $\operatorname{typ}(E^d)=\rm{I}_7^*$. In
\cite[Theorem5\_2.ipynb]{gittwists}, it is verified that
\[
\frac{4a_{6}^{d}}{d^{3}}=\left\{
\begin{array}
[c]{cl}%
P_{R,2} & \text{if }v(P_{R,1})=10,\\
P_{R,1} & \text{if }v(P_{R,1})\geq11.
\end{array}
\right.
\]
It now follows from Corollary \ref{TamagawaQ2} that $c^{d}$ is as claimed. We
note that the conditions for $c^{d}$ are mutually exclusive since the equality
\[
P_{R,2}=P_{R,1}+ 16d(a_{1}^{2} a_{3} + 4 a_{1}^{2} d + 2 a_{1} a_{3} + 4 a_{2} a_{3} + 3 a_{3}^{2} + 16 a_{2} d + 24 a_{3} d + 64 d^{2} + 4 a_{4})
\]
implies that $v(P_{R,1})=10$ if and only if $v(P_{R,2})\geq11$.

\qquad\textbf{Subcase 10b.} Suppose $v(a_{1})\geq 2$ and $v(a_{3})\geq 4$.  By Table
\ref{tab:KNtypeEdeven},
\begin{equation}
\operatorname*{typ}(E^{d})=\left\{
\begin{array}
[c]{cl}%
\rm{I}_{0}^{\ast} & \text{if }v(P_{R,3})=12,\\
\rm{I}_{1}^{\ast} & \text{if }v(P_{R,3})\geq13,v(P_{R,4})=4,\\
\rm{IV}^{\ast} & \text{if }v(P_{R,3})\geq13,v(P_{R,4})\geq5,
\end{array}
\right.  \label{IIstareq1}
\end{equation}
and $F_{R,4}$ is a \KNT model for $E^{d}$ in each of the three cases
appearing in (\ref{IIstareq1}). In \cite[Theorem5\_2.ipynb]{gittwists}, it is
verified that $16a_{2}^{d}=P_{R,4}$ and $256a_{6}^{d}d^{-3}=P_{R,5}$. The
local Tamagawa number $c^{d}$ now follows from Corollary \ref{TamagawaQ2}.
\end{proof}

\vspace{0.2in}
\noindent 
\textbf{Acknowledgements.} This paper began as a research collaboration through the workshop Rethinking Number Theory: 2021, which was organized by Heidi Goodson, Allechar Serrano L\'{o}pez, Christelle Vincent, and Mckenzie West. The authors extend their sincere thanks to the workshop organizers, without which this paper would not have been written. We also thank Joseph H. Silverman for suggesting the name \textit{strongly-minimal model}.
During the carrying out of this work, A. Barrios was supported through a research grant from the University of St. Thomas and an AMS Simons Research Enhancement Grant. H. Wiersema was supported by the Herchel Smith Postdoctoral Fellowship Fund, and the Engineering and Physical Sciences Research Council (EPSRC) grant EP/W001683/1. We also thank the anonymous referees for reading our manuscript very carefully and providing us with valuable comments and suggestions.

\newpage
\section*{Appendix}
\label{AppendixTables}

{\setlength{\tabcolsep}{12pt}
\renewcommand{\arraystretch}{1.15} 
\begin{longtable}{ccccccc}
	\caption{The isomorphism $\left[  u_{j},r_{j},s_{j},w_{j}\right]  $ from $E^{d}$ onto
$F_{R,j}^{v(d)}$. Note that for each $\typ(E)=R$, the model $F_{R,0}^{v(d)}$ is obtained from $E^d$ via the isomorphism $\left[  u_{0},r_{0},s_{0},w_{0}\right]  $.}\\
	\midrule
$v(d)$ & $R$ & $j$ & $u_{j}$ & $r_{j}$ & $s_{j}$ &
$w_{j}$\\
	\midrule
	\endfirsthead
	\caption[]{\emph{continued}}\\
	\midrule
$v(d)$ & $R$ & $j$ & $u_{j}$ & $r_{j}$ & $s_{j}$ &
$w_{j}$\\
	\midrule
	\endhead
	\midrule
	\multicolumn{2}{r}{\emph{continued on next page}}
	\endfoot
	\midrule
	\endlastfoot
0 & $R$ & $0$ & $2$ & $0$ & $a_{1}$ & $4a_{3}$\\\cmidrule{2-7}
%%%%%%
& $\rm{I}_{0}$ & $1$ & $2$ & $0$ & $a_{1}$ & $0$\\\cmidrule{3-7}
&  & $2$ & $2$ & $0$ & $a_{1}$ & $4$\\\cmidrule{3-7}
&  & $3$ & $1$ & $0$ & $a_{1}$ & $0$\\\cmidrule{3-7}
&  & $4$ & $1$ & $0$ & $a_{1}$ & $8$\\\cmidrule{3-7}
&  & $5$ & $1$ & $0$ & $0$ & $4$\\\cmidrule{2-7}
%%%%%%%%%%%%
 & $\rm{I}_{n>0}$ & $1$ & $1$ & $0$ & $a_{1}$ & $4a_{3}$\\\cmidrule{2-7}
 
 & $\rm{I}^*_{n>0}$ & $1$ & $2$ & $2a_3$ & $0$ & $4a_{3}$\\\cmidrule{3-7}
 & & $2$ & $4$ & $0$ & $a_1$ & $0$\\\cmidrule{3-7}
 & & $3$ & $4$ & $0$ & $a_1$ & $4a_3$\\\cmidrule{2-7}

%%%%%%%%%%%%
& $\rm{II}^{\ast}$ & $1$ & $2$ & $a_1^4$ & $a_{1}$ & $a_6$\\\cmidrule{3-7}
&  & $2$ & $4$ & $a_1^2 d + 4a_2 d$ & $0$ & $4a_{3}$\\\hline
%%%%%%% EVEN %%%%%%%%%%%%%%%%%

%%%% I0 %%%%%%%%
$1$ & $R$ & $0$ & $2$ & $0$ & $0$ & $0$\\\cmidrule{2-7}
& $\rm{I}_{0}$ & $1$ & $1$ & $8a_{1}^{2}d$ & $0$ & $4(a_{6}+1)d^{2}$\\\cmidrule{3-7}
&  & $2$ & $1$ & $16a_{1}^{2}d$ & $0$ & $4a_{6}d^{2}$\\\cmidrule{2-7}

%%%% In %%%%%%%%

& $\rm{I}_{n>0}$ & $1$ & $1$ & $-4a_{3}d$ & $0$ & $4a_{3}d^{2}$\\\cmidrule{3-7}
&  & $2$ & $1$ & $4a_{3}d$ & $0$ & $4a_{3}d^{2}$\\\cmidrule{3-7}
&  & $3$ & $1$ & $-4a_{1}^{-1}a_{3}d$ & $0$ & $8a_{4}d^{2}$\\\cmidrule{3-7}
&  & $4$ & $1$ & $-4a_{1}^{-1}a_{3}d$ & $0$ & $0$\\\cmidrule{2-7}

%%%%% II %%%%%%%%

& $\rm{II}$ & $1$ & $2$ & $0$ & $0$ & $4d^{3}$\\\cmidrule{3-7}
&  & $2$ & $2$ & $8d$ & $0$ & $8d^{2}$\\\cmidrule{3-7}
&  & $3$ & $2$ & $0$ & $a_{1}-d$ & $8d^{2}$\\\cmidrule{3-7}
&  & $4$ & $4$ & $a_{2}^{3}d$ & $0$ & $4d^{3}$\\\cmidrule{2-7}

%%%%% I0* %%%%%%%%

& $\rm{I}_{0}^{\ast}$ & $1$ & $2$ & $4d^{2}$ & $0$ & $8a_{2}d^{2}$\\\cmidrule{3-7}
&  & $2$ & $2$ & $16d^{2}$ & $0$ & $16d^{2}$\\\cmidrule{3-7}
&  & $3$ & $2$ & $8d^{2}$ & $0$ & $16d^{2}$\\\cmidrule{3-7}
&  & $4$ & $4$ & $0$ & $2a_{2}$ & $32d$\\\cmidrule{2-7}

%%%%% In* %%%%%%%%

& $\rm{I}_{n>0}^{\ast}$ & $1$ & $2$ & $2a_{3}d$ & $0$ & $0$\\\cmidrule{3-7}
&  & $2$ & $4$ & $0$ & $2d$ & $4a_{3}d^{2}$\\\cmidrule{3-7}
&  & $3$ & $4$ & $16d$ & $4$ & $32d^{2}$\\\cmidrule{3-7}
&  & $4$ & $8$ & $0$ & $2d$ & $0$\\\cmidrule{3-7}
&  & $5$ & $2$ & $2a_{3}d$ & $0$ & $4a_{4}d^{2}$\\\cmidrule{3-7}
&  & $6$ & $2$ & $-2a_{3}d$ & $0$ & $4a_{3}d^{2}$\\\cmidrule{3-7}
&  & $7$ & $2$ & $2a_{3}d$ & $0$ & $4a_{3}d^{2}$\\\cmidrule{3-7}
&  & $8$ & $4$ & $2a_{3}$ & $4$ & $8a_{3}$\\\cmidrule{3-7}
&  & $9$ & $8$ & $2a_{3}$ & $4$ & $8a_{3}$\\\cmidrule{2-7}

& $\rm{IV}^{\ast}$ & $1$ & $2$ & $4d^{2}$ & $0$ & $16d^{2}$\\\cmidrule{2-7}

& $\rm{III}^{\ast}$ & $1$ & $2$ & $2a_{3}d$ & $0$ & $32d^{2}$\\\cmidrule{3-7}
&  & $2$ & $2$ & $2a_{3}d$ & $0$ & $0$\\\cmidrule{3-7}
&  & $3$ & $4$ & $0$ & $0$ & $0$\\\cmidrule{2-7}

& $\rm{II}^{\ast}$ & $1$ & $2$ & $2a_{3}d+16d^{2}$ & $0$ & $32d^{2}$\\\cmidrule{3-7}
&  & $2$ & $2$ & $2a_{3}d$ & $0$ & $32d^{2}$\\\cmidrule{3-7}
&  & $3$ & $4$ & $0$ & $0$ & $0$\\\cmidrule{3-7}
&  & $4$ & $4$ & $a_{4}d$ & $0$ & $32d^{2}$
\label{FRj}	
\end{longtable}}

{\setlength{\tabcolsep}{6pt}
\renewcommand{\arraystretch}{1.15} 
\begin{longtable}{cccC{4.7in}}
	\caption{The polynomials $P_{R,j}^{v(d)}$ to determine Tamagawa
number of $E^{d}$}\\
	\midrule
$v(d)$ & $R$ & $j$ & $P_{R,j}^{v(d)}$\\
	\midrule
	\endfirsthead
	\caption[]{\emph{continued}}\\
	\midrule
$v(d)$ & $R$ & $j$ & $P_{R,j}^{v(d)}$\\
	\midrule
	\endhead
	\midrule
	\multicolumn{3}{r}{\emph{continued on next page}}
	\endfoot
	\midrule
	\endlastfoot
$0$  & $\rm{I}_{n>0}$ & $1$ & $2^{n-1}(d-1)+a_{6}d$\\
&  & $2$ & $a_{3}^{2}+2a_{6}$\\\cmidrule{2-4}
& $\rm{I}_{0}^{\ast}$ & $1$ & $4(d-1)+a_{6}d$\\\cmidrule{2-4}

 & $\rm{I}_{n>0}^*$ & $1$ & $2^{n+1}(d-1)+a_{6}$\\
 &  & $2$ & $2^{n+2}(d-1+a_{2}d)+a_{3}a_{4}+2a_{6}+2^{n+1}a_{3}$ \\\cmidrule{2-4}

& $\rm{II}^{\ast}$ & $1$ & $48+16a_{2}d+4a_{4}+16d+a_{6}d$\\\hline
%%%even
$1$ & $\rm{I}_{0}$ & $1$ & $4+16a_{2}+8a_{4}+4a_{6}-d-da_{6}^{2}-2a_{6}d$\\\cmidrule{3-4}
&  & $2$ & $a_{3}^{2}-a_{6}^{2}d+4a_{6}$\\\cmidrule{2-4}

%%%%%%%%%% I_n %%%%%%%%%%%%%%

& $\rm{I}_{n>0}$ & $1$ & $a_{1}^{2}a_{3}^{2}+2a_{1}a_{3}^{2}+4a_{2}a_{3}^{2}+4a_{3}^{3}-a_{3}^{2}d+a_{3}^{2}+4a_{3}a_{4}+4a_{6}$\\\cmidrule{3-4}
&  & $2$ & $a_{1}a_{2}a_{3}^{2}-a_{1}^{2}a_{3}a_{4}+a_{1}^{3}a_{6}-a_{3}^{3}$\\\cmidrule{3-4}

&  & $3$ & $a_{1}^{2} a_{3}^{2} - 2 a_{1} a_{3}^{2} + 4 a_{2} a_{3}^{2} - 4 a_{3}^{3} - a_{3}^{2} d + a_{3}^{2} - 4 a_{3} a_{4} + 4 a_{6}$ \\\cmidrule{3-4}

&  & $4$ & $ a_{1} a_{2} a_{3}^{2} -a_{1}^{3} a_{4}^{2} d - a_{1}^{2} a_{3} a_{4} + a_{1}^{3} a_{6} - a_{3}^{3}$ \\\cmidrule{2-4}

%%%%%%%%%% II %%%%%%%%%%%%%%

& $\rm{II}$ & $1$ & $a_{3}^{2}+4a_{6}-4d$\\\cmidrule{3-4}
&  & $2$ & $4a_{1}^{2}+4a_{1}a_{3}+a_{3}^{2}+16a_{2}+8a_{4}+4a_{6}-4d+32$\\\cmidrule{2-4}

%%%%%%%%%% I0* %%%%%%%%%%%%%%

& $\rm{I}_{0}^{\ast}$ & $1$ & $a_{1}^{2}d^{2}-4a_{2}^{2}d+2a_{1}a_{3}d+4a_{2}d^{2}+4d^{3}+a_{3}^{2}+4a_{4}d+4a_{6}$\\\cmidrule{3-4}
&  & $2$ & $4a_{1}^{2}d^{2}+4a_{1}a_{3}d+16a_{2}d^{2}+32d^{3}+a_{3}^{2}+8a_{4}d+4a_{6}-16d$\\\cmidrule{3-4}
&  & $3$ & $a_{3}^{2}+4a_{6}-16d$\\\cmidrule{3-4}
&  & $4$ & $a_{3}^{2}d+4a_{6}d-64$\\\cmidrule{3-4}
&  & $5$ & $a_{4}-4a_{2}$\\\cmidrule{2-4}

%%%%%%%%%% In* %%%%%%%%%%%%%%

& $\rm{I}_{n>0}^{\ast}$ & $1$ & $a_{1}^{2}a_{3}^{2}+4a_{1}a_{3}^{2}+4a_{2}a_{3}
^{2}+2a_{3}^{3}+4a_{3}^{2}+8a_{3}a_{4}+16a_{6}$\\\cmidrule{3-4}
&  & $2$ & $a_{3}^{2}d-a_{3}^{2}-4a_{6}$\\\cmidrule{3-4}
&  & $3$ & $4a_{1}+4a_{2}+a_{3}-4d$\\\cmidrule{3-4}
&  & $4$ & $a_{1}^{2}d+4a_{2}d+48d-16$\\\cmidrule{3-4}
&  & $5$ & $16a_{1}^{2}+8a_{1}a_{3}+a_{3}^{2}+64a_{2}+16a_{4}+4a_{6}-64d+256$\\\cmidrule{3-4}
&  & $6$ & $a_{1}^{2}+4a_{2}-4d$\\\cmidrule{3-4}
&  & $7$ & $a_{1}^{2}a_{3}^{2}+4a_{1}a_{3}^{2}+4a_{2}a_{3}^{2}+2a_{3}
^{3}+4a_{3}^{2}+8a_{3}a_{4}+16a_{6}-4a_{4}^{2}d$\\\cmidrule{3-4}
&  & $8$ & $a_{1}^{2}a_{3}^{2}+4a_{1}a_{3}^{2}+4a_{2}a_{3}^{2}+2a_{3}
^{3}-4a_{3}^{2}d+4a_{3}^{2}+8a_{3}a_{4}+16a_{6}$\\\cmidrule{3-4}
&  & $9$ & $a_{1}^{2}a_{3}^{2}-4a_{1}a_{3}^{2}+4a_{2}a_{3}^{2}-2a_{3}^{3}-4a_{3}^{2}d+4a_{3}^{2}-8a_{3}a_{4}+16a_{6}$\\\cmidrule{3-4}
&  & $10$ & $a_{1}^{2}a_{3}^{2}d+4a_{1}a_{3}^{2}d^{2}+4a_{3}^{2}d^{3}+4a_{2}a_{3}^{2}d+8a_{3}a_{4}d^{2}+16a_{6}d^{3}+2a_{3}^{3}-16a_{3}^{2}$\\\cmidrule{2-4}

& $\rm{IV}^{\ast}$ & $1$ & $a_{1}^{2} d^{2} + 2 a_{1} a_{3} d + 4 a_{2} d^{2} + 4 d^{3} + a_{3}^{2} + 4 a_{4} d + 4 a_{6} - 16 d$\\\cmidrule{2-4}
& $\rm{III}^{\ast}$ & $1$ & $a_{1}^{2} a_{3}^{2} + 4 a_{1} a_{3}^{2} + 4 a_{2} a_{3}^{2} + 2 a_{3}^{3} + 4 a_{3}^{2} + 8 a_{3} a_{4} + 16 a_{6}$\\\cmidrule{3-4}
&  & $2$ & $a_{1}^{2} a_{3}^{2} + 4 a_{1} a_{3}^{2} + 4 a_{2} a_{3}^{2} + 2 a_{3}^{3} + 4 a_{3}^{2} + 8 a_{3} a_{4} + 16 a_{6}-256 d$\\\cmidrule{2-4}
& 
$\rm{II}^{\ast}$ & $1$ & $a_1^2a_3^2 + 4a_1a_3^2 + 4a_2a_3^2 + 2a_3^3 + 4a_3^2 + 8a_3a_4 + 16a_6 - 256d$ \\\cmidrule{3-4}
&  & $2$ & 
$P_{\rm{II}^{\ast},1}^1+16d(a_{1}^{2} a_{3} + 4 a_{1}^{2} d + 2 a_{1} a_{3} + 4 a_{2} a_{3} + 3 a_{3}^{2} + 16 a_{2} d + 24 a_{3} d + 64 d^{2} + 4 a_{4})$ 
\\\cmidrule{3-4}
&  & $3$ & $a_{1}^{2} a_{4}^{2} + 8 a_{1} a_{3} a_{4} + 4 a_{2} a_{4}^{2} + a_{4}^{3} + 16 a_{3}^{2} + 16 a_{4}^{2} + 64 a_{6} - 1024 d$\\\cmidrule{3-4}
&  & $4$ & $a_{1}^{2} + 4 a_{2} + 3 a_{4}$
\label{tab:PRj}	
\end{longtable}}

\bibliographystyle{amsalpha}
\bibliography{LocalDataTwists}
\end{document}